\DeclareMathOperator*{\esssup}{ess\,sup}
\DeclareMathOperator*{\essinf}{ess\,inf}
\newtheorem{assumption}[theorem]{Assumption}
\newtheorem{example}[theorem]{Example}
\newtheorem{remark}[theorem]{Remark}
\newcommand{\sX}{{\mathcal X}}
\newcommand{\R}{{\mathbb R}}
\newcommand{\bDGH}[1]{{\color{black}{#1}}}
\newcommand{\E}{\mathbb E}
\newcommand{\Prob}{\mathbb P}
\newcommand{\1}{\mathbbm{1}}
\newcommand{\ol}[1]{{\overline{#1}}}
\newcommand{\ul}[1]{{\underline{#1}}}
\title{Zero-sum Dynkin games under common and independent Poisson constraints}
\author{David Hobson,\ \ Gechun Liang,\ \ Edward Wang\thanks{Department of Statistics, University of Warwick, Coventry, CV4 7AL, U.K.
Email address: {\tt d.hobson@warwick.ac.uk;}\ \ {\tt g.liang@warwick.ac.uk;}\ \ {\tt
e.wang.5@warwick.ac.uk}}}
\begin{document}

\maketitle

\begin{abstract}
\bDGH{Zero-sum Dynkin games under Poisson constraints, where players can only stop at the event times of a Poisson process, have been studied widely in the recent literature.}
The constraint can be modelled in two ways: either both players share the same Poisson process (the common constraint) or each player has their own Poisson process (the independent constraint). \bDGH{In a Markovian framework, where payoffs are functions of an underlying diffusion, we establish necessary and sufficient conditions for the equivalence of the game’s solution—comprising the value function and optimal stopping sets—under the common and independent constraints.} Specifically, if the stopping sets of the maximiser and minimiser in the game under the common constraint are disjoint, then the solution to the game is the same under both the common and the independent constraint. However, the fact that the stopping sets are disjoint in the game under the independent constraint, is not sufficient to guarantee that the solution of the game under the independent constraint is also the solution under the common constraint. \bDGH{To demonstrate the broad applicability of our results, we solve infinite-horizon Dynkin games satisfying the assumptions of our main theorems, using backward stochastic differential equation (BSDE) techniques. This requires extending standard BSDE results from the finite-horizon setting to the infinite-horizon case, allowing for unbounded solutions.}
\end{abstract}

\begin{keywords}
{Zero-sum Dynkin game, common Poisson constraint, independent Poisson constraint, backward stochastic differential equation.} 
\end{keywords}

\begin{AMS}  {60G40, 91A05, 49L20.}  
\end{AMS}

\pagestyle{myheadings} \thispagestyle{plain} \markboth{}{Zero-sum Dynkin games}

\section{Introduction}

A zero-sum Dynkin game is a game played by two players, a maximiser and a minimiser, who each choose a stopping time to maximise/minimise the expected payoff. The two players are then denoted as the \textsc{sup} player and the \textsc{inf} player. If {the \textsc{sup} player} is first to stop (at $\tau$) then the payment is $L_\tau$; if {the \textsc{inf} player} is first to stop (at $\sigma$) then the payment is $U_\sigma$; under a tie ($\tau = \sigma$) then the payment is $M_\tau$. Dynkin games were first introduced by Dynkin \cite{dynkin1969game}, and later extended in a discrete time setup to the above form by Neveu \cite{neveu1975discrete}, and in a continuous time setup by Bismut \cite{bismut1977probleme}. Since then, there have been multiple works on discrete time or continuous time Dynkin games. For example, Cvitanic and Karatzas \cite{cvitanic1996backward} utilized a backward stochastic differential equation (BSDE) approach, Ekstr{\"o}m and Peskir \cite{Ekstrom2008SICON} considered Dynkin games under a Markovian setup (in the sense that the payoffs are disounted functions of an underlying diffusion) and Kifer \cite{Kifer_2} gave an extensive survey with applications to financial game options. An extension of zero-sum Dynkin games allows for randomized strategies. For relevant literature on this topic, refer to Laraki and Solan \cite{laraki2005value}, Rosenberg et al \cite{rosenberg2001stopping}, and Touzi and Vieille \cite{touzi2002continuous}. {More recent work in this area includes De Angelis et al \cite{DeAngelis2020, DeAngelis2021_a, DeAngelis2021_b}} for asymmetric information  and Dai and Dong \cite{DaiDong2024} for applications to reinforcement learning.

In this paper, we consider Dynkin games with an extra constraint on the players' stopping strategies. The idea is that both players may only stop when they receive a signal generated by a Poisson process which is independent of the payoff processes. Optimal stopping problems with this type of constraint were first studied by Dupuis and Wang \cite{dupuis2002optimal} and further developed in Lempa \cite{lempa2012optimal}, Liang \cite{liang2015stochastic},  Liang et al \cite{Liang_2025}, Hobson \cite{Hobson}, Hobson and Zeng \cite{HZ} and {Alvarez et al \cite{Alvarez2024}}. For Dynkin games, P{\'e}rez et al \cite{perez2021non} applied the constraint to one of the players but most of the literature on Dynkin games under constrained stopping focuses on the case where both players are constrained. This has been done by following one of two approaches: either let both players use a common signal process or assign independent signal processes to each player. Under the first approach the players share the constraint, and we denote this approach as the `common Poisson constraint' case. 
Liang and Sun \cite{Liang_Sun} analysed Dynkin games with the common Poisson constraint under the `order condition' $L= M\leq U$ and proved the existence of a saddle point via a BSDE approach, see also Hobson et al \cite{hobson2024callable} for an example under the `generalised order condition' $L\wedge U\leq M\leq L\vee U$. (This condition was also used in the studies by Stettner \cite[Theorem 3]{stettner1982}, Merkulov \cite[Chapter 5]{merkulov2021value} and Guo \cite{guo2020dynkin}
of the existence of the game value in the unconstrained case.)  Under the second approach the Poisson processes describing the constraints for the two players are independent and we denote this approach the `independent Poisson constraint'. Dynkin games under an independent Poisson constraint are studied in Du et al \cite{liang2020risk}, Lempa and Saarinen \cite{lempa2023zero} and {Gapeev \cite{Gapeev2024}.}

In this paper we consider a perpetual zero-sum Dynkin game in a Markovian setting in which both players are constrained to choose stopping times taking values in the event times of a Poisson process.
The main problem we investigate is whether the Dynkin game is the same in the common and the independent constraint setups (in the sense that the value function and the stopping sets of each player do not depend on whether we work in the common or independent constraint problem). Our study is motivated by a finite-horizon example presented in \cite[Remark 5.2]{liang2020risk} in which the game value and optimal strategies of the two players are the same in the two setups, and we wanted to understand the extent to which this result is true in general.

In the main setting of our study the payoffs are based on discounted functions of a time-homogeneous diffusion $X$. In particular $L$ is given by $L_t= e^{-rt}l(X_t)$ and $U$ is given by $U_t = e^{-rt} u(X_t)$ for a pair of non-negative functions $l$ and $u$ defined on the state space of $X$, and we take $M= L$. \bDGH{(In Appendix~\ref{app:m=u}, we consider how the results are modified under the choice $M= U$, and the extent to which the results generalise to other choices of $M$.)} We consider two constrained optimal stopping games, in one each player can only stop at event times of a common Poisson process (of rate $\lambda>0$), and in the other each player is constrained to only stop at event times of their own, individual Poisson process (where this pair of Poisson processes are independent, and each is of rate $\lambda$). In each of these settings (common constraint and independent constraint) it is reasonable to expect that the value function is a \bDGH{continuous} function of the initial value of the underlying diffusion, and the optimal stopping rules are for the \textsc{sup} player to stop at the first event time of their Poisson process (or, in the common constraint problem, at the first event time of the common Poisson process) at which the diffusion is in some (time-independent) set $A$, and for the \textsc{inf} player to stop at the first event time of their Poisson process (or, in the common constraint problem, at the first event time of the common Poisson process) at which the diffusion is in some other (time-independent) set $B$. We give results (Theorems~\ref{thm:satisfyassC} and \ref{thm:satisfyassi}) to show that under some technical assumptions the solution of the game does indeed take this form.
{Indeed, if \( v^I \) is the value of the game under the independent constraint (expressed as a function of the initial value), then an optimal stopping set for the \textsc{sup} player is \( A^I = \{ x : v^I(x) < l(x) \} \), while an optimal stopping set for the \textsc{inf} player is \bDGH{\( B^I = \{ x : v^I(x) > u(x) \} \)}.
Furthermore, if \( v^C \) denotes the value of the game under the common constraint (also as a function of the initial value), an optimal stopping set for the \textsc{sup} player can be chosen as
\(
A^C = \{ x : v^C(x) < l(x) \} \cup \bDGH{\{ x : v^C(x) \geq l(x) > u(x) \}}
\)
and an optimal stopping set for the \textsc{inf} player as
\(
B^C = \bDGH{\{ x : v^C(x) > u(x) \}}.
\)}
Note the difference in form of the stopping set for the \textsc{sup} player: in the common constraint problem the \textsc{sup} player additionally stops if the \textsc{inf} player intends to stop and the \textsc{sup} player can get a higher payoff by stopping themselves.

Suppose that the solution to the Dynkin game under the common constraint is of the form $(v^C, A^C, B^C)$. We ask: under what conditions is this also the solution of the game under the independent constraint. \bDGH{Under the assumption that $M = L$, a necessary and sufficient condition is that ${v^C \wedge l \leq u}$.} Note that under the order condition $l \leq u$ this is automatically satisfied, \bDGH{but under the generalised order condition this may not be the case.} 
The condition \bDGH{$v^C \wedge l \leq u$} may be written as $A^C$ and $B^C$ are disjoint. Now, suppose that the solution to the Dynkin game under the {independent} constraint is of the form $(v^I, A^I, B^I)$. We ask: under what conditions is this also the solution of the game under the common constraint. A  \bDGH{necessary and sufficient condition} is that ${v^I \wedge l \leq u}$, and again, under the order condition this is always satisfied. However, \bDGH{the condition ${v^I \wedge l \leq u}$} is {\em not} equivalent to the fact that $A^I$ and $B^I$ are disjoint.

\bDGH{As described above, under the order condition $l \leq m \leq u$ the condition $v^C \wedge l \leq u$ is automatically satisfied. 
Hence, under the order condition the results are very simple: the value of the game under the common constraint is the same as the value under the independent constraint, and the stopping sets also correspond. The reason for this is that under the order condition it is never optimal for both players to want to stop at the same time. Instead, the real content of our result lies in the case where the order condition is not satisfied. Then, in principle it matters whether the Poisson events are potential stopping events for both players, or just one. The key point is that if $M = L$ then under the condition $v^C \wedge l \leq u$ it is never optimal for both players to want to stop at the same time. Under the independent model, with two Poisson processes each of rate $\lambda$, one for each player, the effective Poisson rate at which either player stops is $\lambda$ on the (disjoint) union of the two players' stopping sets; this is also the rate of stopping in the common Poisson model.}

The rest of the paper is structured as follows. In Section~\ref{sec:setup} we introduce Dynkin games under the common and the independent constraints in a diffusion setting. Our main results on constrained zero-sum Dynkin games are presented in Section~\ref{sec:translate}. Section~\ref{sec:examples} contains two examples. In the first example, the optimal stopping sets
for the problem under the common constraint are disjoint, and therefore the value function and stopping sets of the independent constraint game are the same as those for the common constraint. Then, in a second example we show that the converse is not true---in this example we give the game value and (disjoint) stopping sets for the game under the independent constraint but these do not define the solution for the game under the common constraint.
Finally, in Section~\ref{sec:BSDE}, we prove existence results for zero-sum Dynkin games in the diffusion setting under the common and the independent constraints. This allows us to give a large family of problems for which we know that there exist solutions (with time-homogeneous stopping sets) to the game with either the common or independent constraint, at which point our main result may be applied.
A key element of our modelling is that we assume an infinite horizon problem with unbounded payoffs. This means that the standard finite-horizon BSDE methodology of, for example, Du et al ~\cite{liang2020risk} does not apply. \bDGH{To deal with the infinite horizon and unbounded solutions, we utilize an approximation argument and introduce exponential weighting into the norms on the solution spaces to control growth at infinity. Results concerning alternative choices of $M$ and technical proofs are provided in appendices.} 

\section{Setup}
\label{sec:setup}


We work on a filtered probability space $(\Omega,\mathcal{F}, \mathbbm{F}=(\mathcal{F}_t)_{t\geq 0}, \mathbb{P})$ satisfying the usual conditions, 
and consider a zero-sum Dynkin game. 
The payoff process for the \textsc{sup} player is given by $L= (L_t)_{t \geq 0}$ (where $L$ is a progressively measurable process), and we denote their stopping time by $\tau$, so that if they stop first the payoff of the game is $L_\tau$. Similarly, the payoff process for the \textsc{inf} player is given by the progressively measurable process $U= (U_t)_{t \geq 0}$, and we denote their stopping time by $\sigma$, so that if they stop first the payoff of the game is $U_\sigma$. \bDGH{If both players stop simultaneously ($\tau = \sigma$) then the payoff is given by the progressively measurable process $M=(M_t)_{t \geq 0}$. }
It is quite common in the literature to assume the order condition $L \leq M\leq U$. 
If both players stop at the same time ($\tau = \sigma$) we assume that the stopping of the \textsc{sup} player takes precedence (so the payoff \bDGH{on simultaneous stopping} is $M_\tau=L_\tau$). It is very possible to consider other conventions on the set $\tau = \sigma$ (for example, the payoff is $M_\sigma=U_\sigma$) without changing the spirit of our results. \bDGH{In the main body of the paper we assume $M = L$, and we defer discussion of other cases to Appendix \ref{app:m=u}.}
Finally, if neither player stops at a finite time (i.e. $\tau =\infty=\sigma$) we assume that the payoff is given by an $\mathcal{F}_\infty$-measurable random variable $M_\infty$. Candidate examples are $M_\infty = 0$ or $M_\infty = \lim\limits_{t \rightarrow \infty} L_t$ assuming that this limit exists.
In summary, the payoff of the game $R=R(\tau,\sigma)$ is given by
\begin{equation} 
\label{eq:gamepayoff}
    R(\tau,\sigma) 
    = L_\tau\1_{\{\tau\leq \sigma<\infty\}}+U_\sigma\1_{\{\sigma<\tau\}}+M_\infty\1_{\{\tau=\sigma=\infty\}} ,
\end{equation}
and the expected payoff is defined by $J(\tau,\sigma)=\E[R(\tau,\sigma)]$.

In a general Dynkin game $\tau$ and $\sigma$ can be any stopping times. Instead, we introduce constraints to players' stopping strategies by only allowing them to stop when they receive a signal. The signal is modelled by an independent Poisson process. We want to consider modelling this in two ways.

The first approach is to assume that both players have the same signal process, which we denote by the `common Poisson constraint', {or the `common constraint' as shorthand}. In this case, we assume that the filtered probability space $(\Omega,\mathcal{F}, \mathbbm{F}, \mathbb{P})$ supports a Poisson process with constant intensity $\lambda>0$ which is independent of the payoff processes $L$ and $U$ (and terminal random variable $M_\infty$). We denote the jump times of the Poisson process by $\{T_n^C\}_{1\leq n\leq \infty}$, with $T^C_\infty=\infty$. 
We assume $\tau,\sigma\in\mathcal{R}^C(\lambda)$, where
\[{\mathcal{R}^C}(\lambda)=\{ \mbox{$\gamma$: $\gamma$ a stopping time such that $\gamma(\omega)=T^C_n(\omega)$ for some $n\in \{ 1, \dots \infty \}$}\}.\]
The upper and lower values of the Dynkin game under the common Poisson constraint are defined as:
\begin{equation}\label{eq:commonvalues}
    \ol{v}^C  = \inf_{\sigma \in \mathcal{R}^C(\lambda)} \sup_{\tau \in \mathcal{R}^C(\lambda)} J(\tau,\sigma), \hspace{8mm} \ul{v}^C = \sup_{\tau \in \mathcal{R}^C(\lambda)} \inf_{\sigma \in \mathcal{R}^C(\lambda)} J(\tau,\sigma).
\end{equation}
If $\ul{v}^C = \ol{v}^C$ then we say the game has a value $v^C$ where $v^C = \ul{v}^C=\ol{v}^C$. A pragmatic approach to finding a solution is to try to find a saddle point, i.e. to find a pair $(\tau^*,\sigma^*) \in \mathcal{R}^C(\lambda) \times \mathcal{R}^C(\lambda)$ such that $J(\tau, \sigma^*) \leq J(\tau^*,\sigma^*) \leq J(\tau^*, \sigma)$ for all $(\tau,\sigma) \in \mathcal{R}^C(\lambda) \times \mathcal{R}^C(\lambda)$. Then,
\[ \ol{v}^{C} \leq \sup_{\tau \in \mathcal{R}^C(\lambda)} J(\tau,\sigma^*) \leq J(\tau^*,\sigma^*) \leq \inf_{\sigma \in \mathcal{R}^C(\lambda)} J(\tau^*,\sigma) \leq \ul{v}^{C}. \]
Since trivially $\ul{v}^{C} \leq \ol{v}^{C}$, we conclude that $\ul{v}^{C} = \ol{v}^{C}$ and the game has a value.

The second approach is to assign separate signal processes for the two players, and we call this the `independent Poisson constraint' case, {or the `independent constraint' as shorthand}. In this case, we assume that the probability space $(\Omega,\mathcal{F}, \mathbbm{F}, \mathbb{P})$ supports two independent Poisson processes, both\footnote{{Of course, in this case the players may have signal processes with different rates, but we focus on the case where the rates are the same because we want to compare to the common constraint problem.}} with intensity $\lambda$, such that the two Poisson processes are independent of the payoffs $L,U,M_\infty$. We denote the jump times of the Poisson process by $\{T_n^{M}\}_{1\leq n\leq \infty}$ and $\{T_n^{m}\}_{1\leq n \leq \infty}$ respectively, with $T^{M}_\infty=T_{\infty}^{m}=\infty$. 
We assume $\tau\in\mathcal{R}^{M}(\lambda)$ and $\sigma\in\mathcal{R}^{m}(\lambda)$, where
\begin{align*}
    {\mathcal{R}^{M}}(\lambda)&=\{ \mbox{$\gamma$: $\gamma$ a stopping time such that $\gamma(\omega)=T^{M}_n(\omega)$ for some $n\in \{ 1, \dots \infty \}$}\};\\
    {\mathcal{R}^{m}}(\lambda)&=\{ \mbox{$\gamma$: $\gamma$ a stopping time such that $\gamma(\omega)=T^{m}_n(\omega)$ for some $n\in \{ 1, \dots \infty \}$}\}.
\end{align*}
The upper and lower values of the Dynkin game under the independent Poisson constraint are defined as:
\begin{equation}\label{eq:indepvalues}
    \ol{v}^I  = \inf_{\sigma \in \mathcal{R}^{m}(\lambda)} \sup_{\tau \in \mathcal{R}^{M}(\lambda)} J(\tau,\sigma), \hspace{8mm} \ul{v}^I = \sup_{\tau \in \mathcal{R}^{M}(\lambda)} \inf_{\sigma \in \mathcal{R}^{m}(\lambda)} J(\tau,\sigma).
\end{equation}
The value $v^I$ and saddle point of the game can be defined in the same way as in the first approach.

We suppose the filtered probability space $(\Omega,\mathcal{F}, \mathbbm{F}=(\mathcal{F}_t)_{t\geq 0}, \mathbb{P})$ supports a regular linear diffusion process $X$ driven by a one dimensional Brownian motion $W$. Let the \bDGH{open} interval $\sX\subset \mathbb{R}$ be the state space of $X$. 
We assume that the process $X$ does not die inside $\sX$ and its endpoints are not exit points (see \cite[Chapter 2]{borodin1997handbook} for the classification of endpoints). 
The payoff processes $L$ and $U$ are given by discounted functions of the diffusion $X$ so that $L_t = e^{-r t} l(X_t)$ and $U_t = e^{-r t} u(X_t)$ for some $r>0$ and non-negative functions $l$ and $u$ defined on $\sX$. 

Throughout the paper we denote by $\mathbb{P}^x$ the probability measure $\mathbb{P}$ conditioned on the initial state $X_0=x$, and let $\E^x$ be the expectation with respect to $\mathbb{P}^x$, and use superscript $x$ on random variables to denote the initial state if appropriate.
In this setting we consider a family of zero-sum Dynkin games indexed by the initial point $x$ of the diffusion. 
Then \eqref{eq:gamepayoff} (indexed by $x$) becomes
\begin{equation} 
\label{eq:gamepayoffd}
    R^x(\tau,\sigma) = e^{-r\tau}l(X^x_\tau)\1_{\{\tau\leq \sigma<\infty\}}+e^{-r\sigma}u(X^x_\sigma)\1_{\{\sigma<\tau\}}+M^x_\infty\1_{\{\tau=\sigma=\infty\}}
\end{equation}
where the payoff on $\{\tau = \sigma = \infty\}$ may also depend on $x$ (for example, via $X^x_\infty$ if this limit exists). 

\bDGH{For each $x\in \sX$, (\ref{eq:gamepayoffd}) defines a Dynkin game and we can still talk about the game under a Poisson constraint. If the game value exists for some $x$ then we denote the value by $v^C(x)$ or $v^I(x)$.} By extension we can construct a family of game values, one for each initial value of $X$, and therefore define functions $v^C, v^I : \sX \mapsto \R$ representing the game values under the common and independent Poisson constraints respectively.

Finally, we introduce the first hitting time. For arbitrary initial state $X_0=x\in \sX$ and Borel set $D\subset \sX$, define $\eta^C_D=\eta^C_D(x)=\inf\{T^C_n : n \geq 1: X^x_{T^C_{n}}\in D\}$. $\eta^C_D$ is the first event time of the Poisson process (with jump times $\{T_n^C\}_{1\leq n\leq \infty}$) such that the diffusion process $X$ at this time is in the set $D$. Similarly, under the independent Poisson constraint, we can define first hitting times $\eta^{M}_D$, $\eta^{m}_D$ using the corresponding Poisson processes in the same way.

\section{Equivalence of Dynkin games under different constraints}
\label{sec:translate}
In this section we assume that we have the solution to a zero-sum Dynkin game under either the common or independent constraint, and try to give conditions under which this solution is also the solution under the alternative constraint. We assume that the given solution takes a particular time-homogeneous form (see Assumption~\ref{assumption:existence}). If the problem is in the setting of a time-homogeneous diffusion then the strong Markov property, together with the form of the payoffs, mean that we expect that the optimal strategies for the two players will take the forms of first hitting times. 
Under some technical assumptions we prove in Section~\ref{sec:BSDE} below that this is indeed the case for a wide class of examples. But here, we try to make minimal assumptions on the solution to the problem, and try to derive the main results in a general setting. This allows us to focus on the important properties of the game value under which the main results are true.

\subsection{From the common constraint to the independent constraint}
\label{ssec:ctoi}

We make the following pair of assumptions on the Dynkin game under the common constraint throughout this subsection:
\begin{assumption}\label{assumption:existence}
    The Dynkin game under the common Poisson constraint has a value function $\{ v^C(x) \}_{x \in \sX}$, with saddle point $(\eta^C_{A^C},\eta^C_{B^C})$, where 
    \bDGH{ $A^C=\{x:v^C(x)< l(x) \}\cup\{x:v^C(x) \geq l(x)> u(x)\}$, $B^C=\{x:v^C(x)> u(x)\}$.  }
    
\end{assumption}



\begin{assumption}\label{assumption:technical}
    For every $x\in \sX$, the random variable $M_\infty$ and stopping times $\eta_{A^C}^C$, $\eta_{B^C}^C$ satisfy $e^{-r\gamma}\E^{X^x_\gamma}[M_\infty\1_{\eta_{A^C}^C\wedge \eta_{B^C}^C=\infty}]=\E^x[M_\infty\1_{\eta_{A^C}^C\wedge \eta_{B^C}^C=\infty}|\mathcal{F}_{\gamma}]$ for any $\gamma \in \mathcal{R}^C(\lambda)$. Here $A^C$ and $B^C$ are the sets defined in Assumption \ref{assumption:existence}.
\end{assumption}

Later, in Section~\ref{sec:BSDE}, we will provide a set of sufficient conditions on the Dynkin game for Assumption~\ref{assumption:existence} to hold.
In particular, we will argue that the results of this subsection apply to a wide class of problems.

\begin{remark}\label{rem:openclosed}
    \bDGH{In Assumption \ref{assumption:existence} the inequalities in the defintions of $A^C$ and $B^C$ are chosen such that if $v^C$ is continuous, $l$ is lower semicontinuous and $u$ is upper semicontinuous then $A^C$ and $B^C$ are both open, see Lemma~\ref{lem:open}. This choice is useful when we consider whether the sufficient condition we give in 
    \bDGH{Theorem \ref{thm:necessity}} is also necessary (see Section \bDGH{\ref{ssec:necessity}}). 
    Nevertheless, there is no uniqueness of the saddle point at the level of stopping sets; for example if we take $\hat{B}^C = \{ x : v^C(x) \geq u(x) \}$ then $(\eta^C_{A^C},\eta^C_{\hat{B}^C})$ is also a saddle point.} 
\end{remark}


We have already argued (and will later prove) that Assumption~\ref{assumption:existence} is satisfied in a wide class of constrained zero-sum Dynkin games. The next lemma gives several situations in which Assumption~\ref{assumption:technical} is satisfied.

\begin{lemma}
\label{lem:sufftech}
    Assumption \ref{assumption:technical} holds if any of the following conditions is satisfied:
    \begin{enumerate}
        \item $\eta^C_{A^C}\wedge \eta_{B^C}^C$ is finite $\mathbb{P}^x$-a.s. for every $x\in \sX$;
        \item $M_\infty=0$; 
        \item $M_\infty=\lim\limits_{t\rightarrow \infty} e^{-rt}m(X_t)$, for some non-negative function $m$ on $\sX$, where we further assume that the limit exists $\mathbb{P}^x$-a.s. for every $x\in \sX$, and that $\E^x[\sup_{t\geq 0}e^{-rt}m(X_t)]<\infty$ for every $x\in \sX$.
    \end{enumerate}
\end{lemma}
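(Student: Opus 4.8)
The quantity that appears on both sides of the identity in Assumption~\ref{assumption:technical} is the single random variable $\Xi := M_\infty\1_{\{\eta=\infty\}}$, where I abbreviate $\eta := \eta^C_{A^C}\wedge\eta^C_{B^C}$ (equivalently $\eta=\eta^C_{A^C\cup B^C}$, for $A^C,B^C$ as in Assumption~\ref{assumption:existence}); the claim to prove is the strong-Markov-type identity $e^{-r\gamma}\,\E^{X^x_\gamma}[\Xi]=\E^x[\Xi\,|\,\mathcal F_\gamma]$ for every $\gamma\in\mathcal R^C(\lambda)$. Conditions~(1) and~(2) are essentially immediate. Under~(1), $\1_{\{\eta=\infty\}}=0$ holds $\mathbb P^y$-a.s.\ for every $y\in\sX$, so $\Xi=0$ $\mathbb P^y$-a.s.\ for every $y$, and hence $\E^x[\Xi\,|\,\mathcal F_\gamma]=0=\E^{X^x_\gamma}[\Xi]$. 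Under~(2), $\Xi\equiv 0$ and the identity reads $0=0$. So all of the content is in condition~(3).

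For~(3) the plan is as follows. First, the domination $0\le\Xi\le M_\infty=\lim_{t\to\infty}e^{-rt}m(X_t)\le\sup_{t\ge 0}e^{-rt}m(X_t)$ together with the integrability hypothesis gives $\Xi\in L^1(\mathbb P^y)$ for every $y$, so all conditional expectations below make sense and $g(y):=\E^y[\Xi]$ is finite. The heart of the argument is a multiplicative factorisation of $\Xi$ across the time-shift $\theta_\gamma$ on $\{\gamma<\infty\}$: since any $\gamma\in\mathcal R^C(\lambda)$ is of the form $T^C_N$, the Poisson increments after $\gamma$ again form a rate-$\lambda$ Poisson process by memorylessness, and one checks that (i) $M_\infty=e^{-r\gamma}\,(M_\infty\circ\theta_\gamma)$, using the exponential discount and the a.s.\ existence of the limit, and (ii) the event $\{\eta=\infty\}$, namely ``the diffusion never lies in $A^C\cup B^C$ at a signal time'', factors as $\{\eta>\gamma\}\cap(\{\eta=\infty\}\circ\theta_\gamma)$, where $\{\eta>\gamma\}\in\mathcal F_\gamma$ records that no such visit has occurred at or before $\gamma$. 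Since $\{\eta=\infty\}\subseteq\{\eta>\gamma\}$ on $\{\gamma<\infty\}$, combining (i)--(ii) gives $\Xi=e^{-r\gamma}\,\1_{\{\eta>\gamma\}}\,(\Xi\circ\theta_\gamma)$ on $\{\gamma<\infty\}$. As the prefactor $e^{-r\gamma}\1_{\{\eta>\gamma\}}$ is $\mathcal F_\gamma$-measurable, the strong Markov property of the joint process $(X_t,N^C_t)_{t\ge 0}$ at $\gamma$ --- which is a jump time of the counting process $N^C$, so the enlarged state restarts from $(X^x_\gamma,0)$ --- yields $\E^x[\Xi\,|\,\mathcal F_\gamma]=e^{-r\gamma}\,\1_{\{\eta>\gamma\}}\,\E^{X^x_\gamma}[\Xi]$ on $\{\gamma<\infty\}$. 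It then remains to remove the indicator and to cover $\{\gamma=\infty\}$: on $\{\eta\le\gamma\}$ the conditional-expectation side vanishes (because $\Xi\1_{\{\eta\le\gamma\}}=0$ and $\{\eta\le\gamma\}\in\mathcal F_\gamma$), on $\{\gamma=\infty\}$ the factor $e^{-r\gamma}$ annihilates the other side, and the remaining event $\{\eta\le\gamma<\infty\}$ --- the regime in which the common game has already terminated --- is dealt with by reading both sides as the value at $\gamma$ of the uniformly integrable martingale $t\mapsto\E^x[\Xi\,|\,\mathcal F_t]$, which is closed by $\Xi$ via martingale convergence. Assembling these pieces yields Assumption~\ref{assumption:technical}.

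The step I expect to be the main obstacle is the rigorous application of the strong Markov property to the enlarged (diffusion, Poisson-counter) process at the signal-valued time $\gamma$ --- in particular getting the restart of the Poisson clock exactly right --- together with the justification of interchanging $\lim_{t\to\infty}$ with conditional expectation, which is precisely where the domination hypothesis $\E^x[\sup_{t\ge 0}e^{-rt}m(X_t)]<\infty$ of~(3) is needed. The accompanying bookkeeping on the events $\{\gamma=\infty\}$ and $\{\gamma>\eta\}$, while conceptually straightforward, also needs a little care; everything else is routine.
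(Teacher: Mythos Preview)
For conditions~(1) and~(2) your argument is the paper's. For condition~(3) your plan---factorise $\Xi$ across the shift $\theta_\gamma$, apply the strong Markov property of the pair $(X,N^C)$ at the signal time $\gamma$, and use the domination hypothesis to interchange limit and expectation---is the same route the paper takes; the paper merely compresses it into the chain
\[
\E^x[M_\infty\1_{\{\eta=\infty\}}\,|\,\mathcal F_\gamma]
=\E^x\Bigl[\lim_{t\to\infty}e^{-r(t+\gamma)}m(X_{t+\gamma})\1_{\{\eta>t+\gamma\}}\,\Bigm|\,\mathcal F_\gamma\Bigr]
=\cdots
=e^{-r\gamma}\E^{X^x_\gamma}[M_\infty\1_{\{\eta=\infty\}}],
\]
with dominated convergence justifying the two interchanges of limit and expectation.

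Your more explicit case-splitting exposes a point the paper's chain glosses over, but your proposed patch does not work. The strong-Markov step only yields
$\E^x[\Xi\,|\,\mathcal F_\gamma]=e^{-r\gamma}\,\1_{\{\eta>\gamma\}}\,\E^{X^x_\gamma}[\Xi]$ on $\{\gamma<\infty\}$, because $\{\eta>t+\gamma\}=\{\eta>\gamma\}\cap(\{\eta>t\}\circ\theta_\gamma)$ and the $\mathcal F_\gamma$-measurable factor $\1_{\{\eta>\gamma\}}$ survives. Your remedy on $\{\eta\le\gamma<\infty\}$---``reading both sides as the value at $\gamma$ of the uniformly integrable martingale $t\mapsto\E^x[\Xi\,|\,\mathcal F_t]$''---is circular: identifying $e^{-r\gamma}\E^{X^x_\gamma}[\Xi]$ with that conditional expectation is exactly the equality in question, and in general the two sides \emph{disagree} on this set, since the conditional expectation vanishes there while $e^{-r\gamma}g(X^x_\gamma)$ need not (a fresh copy of $(X,N^C)$ started from $X^x_\gamma$ can still satisfy $\{\eta=\infty\}$ with positive probability, even if $X^x_\gamma\in A^C\cup B^C$). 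The paper's transition from the third to the fourth line of its display drops the same indicator without comment. What the argument under~(3) genuinely proves is the identity of Assumption~\ref{assumption:technical} on the event $\{\gamma<\eta\}$; this is precisely the regime in which it is invoked downstream (see the proof of Lemma~\ref{lem:SMPtau}, where the relation is used only under the indicator $\1_{\{\tau<\eta^{min}_{B^C}\}\cap\{\tau<\gamma^{MAX}_{\tau,A^C}\}}$).
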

\begin{proof}
    If $\eta^C_A\wedge \eta_B^C$ is finite $\mathbb{P}^x$-a.s. for every $x\in \sX$, then the set $\{\eta^C_A\wedge \eta^C_B=\infty\}$ is a null set regardless of initial condition, hence the required equality follows. Similarly, if $M_\infty=0$, the equality is also trivial.

    In the final case, by strong Markov property:
    \begin{eqnarray*}
        \lefteqn{\E^x[M_\infty\1_{\eta_A^C\wedge \eta_B^C=\infty}|\mathcal{F}_\gamma]}\\
        &=& \E^{x}[ \lim_{t\rightarrow \infty} e^{-r(t+\gamma)}m(X_{t+\gamma})\1_{\eta_A^C\wedge \eta_B^C>t+\gamma}|\mathcal{F}_\gamma]\\
        &=&\lim_{t\rightarrow \infty} e^{-r\gamma}\E^{x}[ e^{-rt}m(X_{t+\gamma})\1_{\eta_A^C\wedge \eta_B^C>t+\gamma}|\mathcal{F}_\gamma]\\
         &=& \lim_{t\rightarrow \infty} e^{-r\gamma}\E^{X^x_\gamma}[ e^{-rt}m(X_{t})\1_{\eta_A^C\wedge \eta_B^C>t}]\\
         &=& e^{-r\gamma}\E^{X^x_\gamma}[ \lim_{t\rightarrow \infty} e^{-rt}m(X_{t})\1_{\eta_A^C\wedge \eta_B^C>t}]=e^{-r\gamma}\E^{X^x_\gamma}[ M_\infty\1_{\eta_A^C\wedge \eta_B^C=\infty}],
    \end{eqnarray*}
where the swap of limit and expectation is justified by the integrability assumption.\end{proof}


\begin{example}
\label{eg:canonical}
    We provide a canonical example where Assumptions~\ref{assumption:technical} and \ref{assumption:existence} hold, with $M_\infty=0$. 

    Let $X$ be a geometric Brownian motion with drift $\mu$ and volatility $\sigma$, such that $\frac{\sigma^2}{2}<r$. Suppose $\mu<\frac{\sigma^2}{2}$. Then $\sX=(0,\infty)$. Suppose $l$ is given by $l(x)=(x-P)^+$ for some $P>0$. Note that $L_\infty := \lim\limits_{t \rightarrow \infty} e^{-rt} l(X_t)$ exists and that $L_\infty=0$. By \cite[Proposition 4.5]{hobson2024callable}, the optimal stopping problem $\sup_{\tau\in \mathcal{R}^C(\lambda)}\E^x[e^{-r\tau}l(X_\tau)]$ has a value $V(x)$ and the optimal stopping time is given by $\eta^C_{[x^*,\infty)}$ for some $x^*>0$, with $V(x)>l(x)$ on $(0,x^*)$ and $V(x)<l(x)$ on $(x^*,\infty)$. Suppose $u$ is given by $u(x)=(l\vee V)(x)+P$ and take $M_\infty=0$. (This is very reasonable in this problem since $L_\infty$ and $U_\infty := \lim\limits_{t \rightarrow \infty} e^{-rt} u(X_t)$ both exist and are equal to zero.)

    It follows from \cite[Theorem 2.6]{hobson2024callable} (and is easy to believe) that we have a saddle point of the form stated in Assumption \ref{assumption:existence}, with $v^C=V$, $A=A^C=[x^*,\infty)$ and $B=B^C=\emptyset$. Given that $\mu<\frac{\sigma^2}{2}$ we have $\mathbb{P}^x(\eta_A^C=\infty)>0$ and $ \mathbb{P}^x(\eta_B^C=\infty)=1$ for every $x\in \sX$. 
\end{example}

\begin{example}\label{eg:extension}
     In this example, $\eta^C_A = \eta^C_B=\infty$ a.s. and $M_\infty\neq 0$ a.s.. Nonetheless, Assumption \ref{assumption:technical} and \ref{assumption:existence} are both satisfied. Let $X$ satisfy $dX_t=rX_t\,dt+dW_t$ with $r>0$ and $X_0=x$, so that $e^{-rt}X_t=x+\int_0^te^{-rs}\,dW_s$ which converges a.s. as $t\uparrow \infty$ to a random variable which has a non-centred Gaussian distribution. Suppose $l(x)=|x|$. Then $L =(L_t)_{t\geq 0}$ given by $L_t = e^{-rt} l(X_t)$ is a square integrable submartingale for any initial value $x$ with limit $L_\infty := \lim\limits_{t \rightarrow \infty} L_t$ and $V(x) :=\sup_{\tau\in \mathcal{R}(\lambda)}\E^x[L_\tau]=\E^x[L_\infty] = \E[|G_{x,1/2r}|]$ where $G_{a,b}$ is a Gaussian random variable with mean $a$ and variance $b$.

     Choose $c> \E[|G_{0, 1/2r}|]$. Set $u(x):=l(x)+c$ and note that $u(x) > V(x)$. Suppose $M_\infty := L_\infty = \E[|x+\int_0^\infty e^{-rs}\,dW_s|]$.

Now consider the game defined via payoffs $L$, $U$ and $M_\infty$. Then the value function is given by $v^C=V$. Moreover the optimal strategy for both the \textsc{sup} and \textsc{inf} players is to never stop, so that $A^C=B^C =\emptyset$. 
Then Assumption \ref{assumption:technical} holds by the third element of Lemma~\ref{lem:sufftech} and the square integrability of the submartingale $L$.


\end{example}

We begin our study with a lemma which compares the expected payoff of the game under the independent constraint for two different stopping rules. 
The proof is given in the Appendix and relies on a coupling of marked Poisson processes.

\begin{lemma}\label{lem:comp1}
 Suppose $D$ and $E$ are disjoint {Borel} subsets of $\sX$. Fix $x\in \sX$.

 Suppose that $\tau\in \mathcal{R}^{M}(\lambda)$ satisfies $\mathbb{P}^x(\mbox{$\tau=\infty$ or $X_\tau\in D$})=1$. Then $(\tau\wedge \eta_E^{M},X_{\tau\wedge \eta_E^{M}})$ and $(\tau\wedge\eta_E^{m},X_{\tau\wedge\eta_E^{m}})$ have the same distribution and $J^x(\tau,\eta_E^{m})=J^x(\tau,\eta_E^{M})$.

 Similarly, given $\sigma\in \mathcal{R}^{m}(\lambda)$ such that $\mathbb{P}^x(\mbox{$\sigma=\infty$ or $X_\sigma\in E$})=1$, then $(\eta_D^{M}\wedge \sigma,X_{\eta_D^{M}\wedge \sigma})$ and $(\eta_D^{m}\wedge\sigma,X_{\eta_D^{m}\wedge\sigma})$ have the same distribution and  $J^x(\eta_D^{M},\sigma)=J^x(\eta_D^{m},\sigma)$.
\end{lemma}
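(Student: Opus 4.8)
The plan is to prove the first pair of assertions; the second follows from the same argument with the roles of the \textsc{sup} and \textsc{inf} players, and of $D$ and $E$, interchanged. The idea is to construct, on one probability space, a modification $\widetilde N^{min}$ of the minimiser's Poisson process for which simultaneously (i) the associated ``$\eta^{min}_E$'' coincides \emph{pathwise} with $\eta^{MAX}_E$, (ii) the stopping time $\tau$ is left \emph{unchanged}, and (iii) the data driving the game has the \emph{same joint law} as originally; the two claimed equalities then follow.

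First I would condition on the path of $X$. Because the Poisson processes are independent of $X$, conditionally on $X$ the event times of $N^{MAX}$ lying in $\{t:X_t\in E\}$, those of $N^{MAX}$ lying in $\{t:X_t\notin E\}$, and the two analogous families of $N^{min}$, form four mutually independent Poisson processes, the two ``$E$-families'' having the common time-dependent intensity $\lambda\1_{\{X_t\in E\}}$; this is the restriction property of a marked Poisson random measure. Let $\mathcal G$ be the $\sigma$-algebra generated by $X$ together with the event times of $N^{MAX}$ and of $N^{min}$ lying outside $E$. The point of the hypotheses $D\cap E=\varnothing$ and $\mathbb P^x(\tau=\infty\text{ or }X_\tau\in D)=1$ is that $\tau$ then ``ignores'' the $E$-events, i.e.\ admits a $\mathcal G$-measurable version — in the applications in Section~\ref{sec:translate} $\tau$ is a first hitting time $\eta^{MAX}_{A}$ of a set $A\subseteq\sX\setminus E$, which is visibly $\mathcal G$-measurable. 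On the other hand $\eta^{MAX}_E$ (resp.\ $\eta^{min}_E$) is the first event time of the $E$-family of $N^{MAX}$ (resp.\ of $N^{min}$), and conditionally on $\mathcal G$ these two $E$-families are independent Poisson processes of the same intensity $\lambda\1_{\{X_t\in E\}}$; in particular $\eta^{MAX}_E$ and $\eta^{min}_E$ are conditionally i.i.d.\ given $\mathcal G$.

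Now let $\widetilde N^{min}$ be the point process whose events outside $E$ are those of $N^{min}$ and whose events in $E$ are those of $N^{MAX}$, and put $\widetilde\eta^{min}_E:=\inf\{T:T\text{ an event time of }\widetilde N^{min}\text{ with }X_T\in E\}$. By the conditional i.i.d.\ property, the conditional law of $\widetilde N^{min}$ given $\mathcal G$ equals that of $N^{min}$ given $\mathcal G$, so for the $\mathcal G$-measurable $\tau$ (unchanged under the substitution, since $\mathcal G$ involves $N^{min}$ only through its events outside $E$) we get $(\tau\wedge\widetilde\eta^{min}_E,\,X_{\tau\wedge\widetilde\eta^{min}_E})\overset{d}{=}(\tau\wedge\eta^{min}_E,\,X_{\tau\wedge\eta^{min}_E})$ and $R^x(\tau,\widetilde\eta^{min}_E)\overset{d}{=}R^x(\tau,\eta^{min}_E)$. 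But by construction $\widetilde\eta^{min}_E=\eta^{MAX}_E$ \emph{pathwise}, while $\tau$, $X$ and the terminal variable $M_\infty$ are literally untouched (here one uses that $M_\infty$ is a functional of $X$, which covers $M_\infty=0$ and $M_\infty=\lim_{t\to\infty}e^{-rt}m(X_t)$), so $\tau\wedge\widetilde\eta^{min}_E=\tau\wedge\eta^{MAX}_E$ and $R^x(\tau,\widetilde\eta^{min}_E)=R^x(\tau,\eta^{MAX}_E)$ pathwise. Chaining these gives $(\tau\wedge\eta^{min}_E,X_{\tau\wedge\eta^{min}_E})\overset{d}{=}(\tau\wedge\eta^{MAX}_E,X_{\tau\wedge\eta^{MAX}_E})$ and $J^x(\tau,\eta^{min}_E)=\mathbb E[R^x(\tau,\widetilde\eta^{min}_E)]=\mathbb E[R^x(\tau,\eta^{MAX}_E)]=J^x(\tau,\eta^{MAX}_E)$.

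The step I expect to be the main obstacle is the assertion that $\tau$ has a $\mathcal G$-measurable version: a priori $\tau$ is only a stopping time for the full filtration and could respond to the $E$-events of $N^{MAX}$, so it is here that $\mathbb P^x(X_\tau\in D\text{ or }\tau=\infty)=1$ together with $D\cap E=\varnothing$ must be used, and it is why the lemma is applied to strategies of first-hitting-time form. The other ingredients are routine but need some care: realising the conditional (Cox) decomposition and the coupling measurably in the path of $X$, via a regular conditional distribution of $(N^{MAX},N^{min})$ given $X$ (which exists by independence); verifying the Poisson restriction property and the resulting conditional independence; and the bookkeeping of $M_\infty$ on $\{\tau=\eta_E=\infty\}$.
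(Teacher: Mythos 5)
Your construction is, in substance, the same coupling as the paper's: both proofs identify the $E$-marks of the two Poisson processes with one another (you by swapping the $E$-events of $N^{min}$ for those of $N^{MAX}$ after conditioning on $X$, the paper by thinning a single rate-$2\lambda$ marked process in two different ways), and both reduce the claim to the assertion that $\tau$ is unaffected by this identification because it ``only uses'' the marks at which $X\in D$. The parts you carry out in detail --- the conditional independence of the four restricted point processes given $X$, the equality in law of $\widetilde N^{min}$ and $N^{min}$ given $\mathcal G$, and the bookkeeping for $R$ and $M_\infty$ --- are fine, and are if anything organised more carefully than in the paper's version.

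However, the step you flag as the ``main obstacle'' is a genuine gap, and it cannot be closed under the stated hypotheses alone: a stopping time $\tau\in\mathcal R^{MAX}(\lambda)$ with $\Prob^x(\tau=\infty\ \text{or}\ X_\tau\in D)=1$ need \emph{not} admit a $\mathcal G$-measurable version. Take $\tau$ to be the first event time $T^{MAX}_n$ with $X_{T^{MAX}_n}\in D$ occurring strictly after the first event time $T^{MAX}_m$ with $X_{T^{MAX}_m}\in E$ (and $\tau=\infty$ if no such time exists). This $\tau$ satisfies the hypotheses of the lemma, yet $\tau\wedge\eta^{MAX}_E=\eta^{MAX}_E$ identically, so $X_{\tau\wedge\eta^{MAX}_E}\in E$ whenever this time is finite, whereas $\Prob^x(X_{\tau\wedge\eta^{min}_E}\in D)>0$ for, say, Brownian $X$ and nonempty open $D$, $E$: the two pairs are not equal in law and $J^x(\tau,\eta_E^{MAX})\neq J^x(\tau,\eta_E^{min})$ in general. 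So the assertion that $\tau$ ``ignores'' the $E$-events is not a technicality to be checked but an additional hypothesis that must be imposed (measurability of $\tau$ with respect to $X$ and the marks at which $X\notin E$); it holds for the first-hitting strategies $\eta^{MAX}_A$ with $A\cap E=\emptyset$, but not for every $\tau$ covered by the lemma's wording. Your proof is therefore incomplete at exactly the point you identify. For what it is worth, the paper's own proof makes the identical unproven claim (``only those red marks at which $X$ is in $D$ are used for the modelling of $\tau$''), so you have faithfully reproduced its argument, weak point included; the difference is that you are explicit about where the weight of the hypotheses must be borne.
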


Taking $\tau=\eta_{A^C}^{M}$ and $E=B^C$ in Lemma \ref{lem:comp1}, and using the fact that $J^x(\eta^C_D,\eta^C_E) = J^x(\eta^{M}_D,\eta^{M}_E) =  J^x(\eta^{m}_D,\eta^{m}_E)$ for any Borel sets $D$, $E$, we have:
\begin{corollary}\label{cor:commonequality}
    Suppose that the Dynkin game under the common Poisson constraint satisfies Assumption \ref{assumption:existence}, with $A^C\cap B^C=\emptyset$. Then
$\eta_{A^C}^C\wedge\eta_{B^C}^C$ and $\eta_{A^C}^{M}\wedge \eta_{B^C}^{m}$ are equally distributed and $v^C(x)=J^x(\eta^C_{A^C},\eta^C_{B^C})=J^x(\eta^{M}_{A^C},\eta^{m}_{B^C})$ for every $x\in \sX$.
\end{corollary}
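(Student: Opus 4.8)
The plan is to chain together three facts: (i) the expected payoff when both players follow first-hitting strategies driven by one and the same rate-$\lambda$ Poisson process (independent of $X$) does not depend on which such Poisson process is used; (ii) Lemma~\ref{lem:comp1} allows the minimiser's MAX-driven hitting time to be swapped for a min-driven one, provided the maximiser only ever stops inside a set disjoint from the minimiser's target set; (iii) by Assumption~\ref{assumption:existence} the pair $(\eta^C_{A^C},\eta^C_{B^C})$ is a saddle point, hence its expected payoff equals $v^C$.

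First I would record that $\eta^{MAX}_{A^C}\in\mathcal{R}^{MAX}(\lambda)$ and, by definition of a first hitting time, $\mathbb{P}^x(\eta^{MAX}_{A^C}=\infty\text{ or }X_{\eta^{MAX}_{A^C}}\in A^C)=1$ for every $x\in\sX$. Since $A^C$ and $B^C$ are disjoint Borel sets, Lemma~\ref{lem:comp1} applied with $\tau=\eta^{MAX}_{A^C}$, $D=A^C$ and $E=B^C$ yields that $(\eta^{MAX}_{A^C}\wedge\eta^{MAX}_{B^C},\,X_{\eta^{MAX}_{A^C}\wedge\eta^{MAX}_{B^C}})$ and $(\eta^{MAX}_{A^C}\wedge\eta^{min}_{B^C},\,X_{\eta^{MAX}_{A^C}\wedge\eta^{min}_{B^C}})$ have the same law, and in particular $J^x(\eta^{MAX}_{A^C},\eta^{min}_{B^C})=J^x(\eta^{MAX}_{A^C},\eta^{MAX}_{B^C})$. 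On the other hand, fact (i) gives $J^x(\eta^C_{A^C},\eta^C_{B^C})=J^x(\eta^{MAX}_{A^C},\eta^{MAX}_{B^C})$ and, jointly with the state at that time, $\eta^C_{A^C}\wedge\eta^C_{B^C}$ and $\eta^{MAX}_{A^C}\wedge\eta^{MAX}_{B^C}$ have the same distribution. Combining the two, $\eta^C_{A^C}\wedge\eta^C_{B^C}$ and $\eta^{MAX}_{A^C}\wedge\eta^{min}_{B^C}$ are equally distributed and $J^x(\eta^C_{A^C},\eta^C_{B^C})=J^x(\eta^{MAX}_{A^C},\eta^{min}_{B^C})$. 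Finally, since $(\eta^C_{A^C},\eta^C_{B^C})$ is a saddle point of the game under the common constraint, $J^x(\eta^C_{A^C},\eta^C_{B^C})=v^C(x)$, closing the chain.

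The only point needing care --- more a bookkeeping matter than a genuine obstacle --- is fact (i): a first-hitting strategy $\eta^\bullet_D$ is insensitive to which realisation of a rate-$\lambda$ Poisson process independent of $X$ underlies it, so two players both hitting their respective target sets along the \emph{same} such process produce the same payoff law regardless of whether that process is $\{T^C_n\}$, $\{T^{MAX}_n\}$ or $\{T^{min}_n\}$. This is an elementary consequence of the independence of the Poisson processes from $X$ together with their common law, and is exactly the kind of identification already made inside the coupling proof of Lemma~\ref{lem:comp1}; one may in fact simply read off $J^x(\eta^C_D,\eta^C_E)=J^x(\eta^{MAX}_D,\eta^{MAX}_E)=J^x(\eta^{min}_D,\eta^{min}_E)$ for arbitrary Borel $D,E$ from that coupling. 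It is worth stressing that disjointness of $A^C$ and $B^C$ enters only through the use of Lemma~\ref{lem:comp1}; without it one cannot in general pass from the MAX/MAX game to the MAX/min game.
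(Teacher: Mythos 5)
Your proof is correct and follows essentially the same route as the paper, whose entire argument is the sentence preceding the corollary: apply Lemma~\ref{lem:comp1} with $\tau=\eta^{MAX}_{A^C}$, $D=A^C$, $E=B^C$, combine with the identity $J^x(\eta^C_D,\eta^C_E)=J^x(\eta^{MAX}_D,\eta^{MAX}_E)=J^x(\eta^{min}_D,\eta^{min}_E)$ coming from the Poisson processes having the same law and being independent of $X$, and finish with the saddle-point property from Assumption~\ref{assumption:existence}. Your added remarks on where disjointness is used and on justifying the identification of the three first-hitting payoffs are accurate but do not change the argument.
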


Note that Assumption \ref{assumption:technical} is about the properties of $M_\infty$ on the set $\{\eta_{A^C}^C\wedge \eta_{B^C}^C=\infty\}$. Given that $\eta_{A^C}^C\wedge\eta_{B^C}^C$ and $\eta_{A^C}^{M}\wedge \eta_{B^C}^{m}$ have the same distribution, we may use Assumption~\ref{assumption:technical} to deduce that an analogous statement automatically holds in the independent constraint setting:
\begin{corollary}\label{cor:assumption}
    Suppose that Assumption \ref{assumption:technical} holds. Then, for every $x\in \sX$, the random variable $M_\infty $ satisfies $e^{-r\gamma}\E^{X^x_\gamma}[M_\infty\1_{\eta_{A^C}^{M}\wedge \eta_{B^C}^{m}=\infty}]=\E^x[M_\infty\1_{\eta_{A^C}^{M}\wedge \eta_{B^C}^{m}=\infty}|\mathcal{F}_{\gamma}]$, for any $\gamma \in \mathcal{R}^{M}(\lambda)$ or $\mathcal{R}^{m}(\lambda)$.
\end{corollary}


For a Borel set $D$ and for any $\tau \in \mathcal{R}^{M}(\lambda)$ we define $\gamma_{\tau,D}^{M}$ to be the first event time in $({T^{M}_n})_{n \geq 1}$ from $\tau$ onwards such that the diffusion process $X$ is in the set $D$: $\gamma_{\tau,D}^{M}=\inf\{t\geq \tau:t\in (T_j^{M})_{j\geq 1},X_t\in D\}$. Note that $\gamma_{\tau,D}^{M} \in \mathcal{R}^{M}(\lambda)$. Similarly, for any $\sigma\in \mathcal{R}^{m}(\lambda)$, define $\gamma_{\sigma,D}^{m}=\inf\{t\geq \sigma:t\in (T_j^{m})_{j\geq 1},X_t\in D\}\in \mathcal{R}^{m}(\lambda)$.

\begin{lemma}\label{lem:SMPtau}
Suppose that the Dynkin game under the  common Poisson constraint satisfies Assumption \ref{assumption:existence} and Assumption \ref{assumption:technical}. Fix $x\in \sX$. Then, for any $\tau \in \mathcal{R}^{M}(\lambda)$ and any $\sigma\in \mathcal{R}^{m}(\lambda)$, the following equalities hold:
\begin{eqnarray*}
     e^{-r\tau}v^C(X^x_\tau)\1_{\{\tau<\eta_{B^C}^{m}\}\cap\{\tau< \gamma_{\tau,A^C}^{M}\}}&=&\E^x[R(\gamma^{M}_{\tau,A^C},\eta_{B^C}^{m})|\mathcal{F}_\tau]\1_{\{\tau<\eta_{B^C}^{m}\}\cap\{\tau< \gamma_{\tau,A^C}^{M}\}};\\
    e^{-r\sigma}v^C(X^x_\sigma)\1_{\{\sigma<\eta_{A^C}^{M}\}\cap\{\sigma< \gamma_{\sigma,B^C}^{m}\}}&=&\E^x[R(\eta_{A^C}^{M},\gamma^{m}_{\sigma,B^C})|\mathcal{F}_\sigma]\1_{\{\sigma<\eta_{A^C}^{M}\}\cap\{\sigma< \gamma_{\sigma,B^C}^{m}\}}.
\end{eqnarray*}
\end{lemma}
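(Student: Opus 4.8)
The plan is to prove the first identity; the second follows by the symmetric argument, interchanging the roles of the \textsc{sup} and \textsc{inf} players and of $(A^C, \mathcal{R}^{MAX})$ and $(B^C, \mathcal{R}^{min})$. Fix $x \in \sX$ and $\tau \in \mathcal{R}^{MAX}(\lambda)$. On the event $\{\tau < \eta_{B^C}^{min}\} \cap \{\tau < \gamma_{\tau,A^C}^{MAX}\}$ we have in particular $\tau < \infty$, so $X_\tau$ is well-defined. The key observation is that, \emph{conditionally on $\mathcal{F}_\tau$ and restricted to the relevant event}, the pair of stopping times $(\gamma^{MAX}_{\tau,A^C}, \eta_{B^C}^{min})$ behaves like a fresh pair $(\eta_{A^C}^{MAX}, \eta_{B^C}^{min})$ started from $X_\tau$: by the strong Markov property of $X$ at $\tau$, the independence of the two Poisson processes from $X$, and the memorylessness of the Poisson processes, the post-$\tau$ signals of the \textsc{sup} player (those after $\tau$) form a fresh rate-$\lambda$ Poisson process and likewise for the \textsc{inf} player, and on $\{\tau < \eta_{B^C}^{min}\}$ the \textsc{inf} player's first post-$\tau$ arrival in $B^C$ is exactly $\eta_{B^C}^{min}$ restarted. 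Hence $\E^x[R(\gamma^{MAX}_{\tau,A^C},\eta_{B^C}^{min}) \mid \mathcal{F}_\tau]$ equals $e^{-r\tau}$ times $\E^{X^x_\tau}[R(\eta_{A^C}^{MAX},\eta_{B^C}^{min})]$, again on the relevant event. (The discount factor $e^{-r\tau}$ is pulled out because all payoffs after $\tau$ carry the factor $e^{-r\tau}$; here one uses Assumption \ref{assumption:technical}, via Corollary \ref{cor:assumption}, to handle the $M_\infty$ term on $\{\eta_{A^C}^{MAX} \wedge \eta_{B^C}^{min} = \infty\}$ and justify the factorisation of that piece as well.)

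The second ingredient is to identify $\E^{X^x_\tau}[R(\eta_{A^C}^{MAX},\eta_{B^C}^{min})]$ with $v^C(X^x_\tau)$. By Corollary \ref{cor:commonequality} this would follow immediately if $A^C \cap B^C = \emptyset$, but in general they need not be disjoint, so I instead argue directly: since $(\eta^C_{A^C}, \eta^C_{B^C})$ is a saddle point for the common-constraint game with value $v^C$ (Assumption \ref{assumption:existence}), and since $J^x(\eta^C_{A^C}, \eta^C_{B^C}) = J^x(\eta^{MAX}_{A^C}, \eta^{min}_{B^C})$ whenever the two players use first-hitting-time strategies of fixed sets — this last equality is the observation recorded just before Corollary \ref{cor:commonequality}, namely $J^x(\eta^C_D,\eta^C_E) = J^x(\eta^{MAX}_D,\eta^{min}_E)$, which holds because on the diagonal-free part the order of arrivals is all that matters and the superposition/thinning structure makes the joint law of $(\eta_D \wedge \eta_E, X_{\eta_D \wedge \eta_E})$ identical in the two models when $D = A^C$ and $E = B^C$ are played against each other (note: the saddle strategies themselves force the relevant tie-handling, so disjointness of $A^C, B^C$ is not needed for \emph{this} particular equality). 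Thus $\E^{X^x_\tau}[R(\eta_{A^C}^{MAX},\eta_{B^C}^{min})] = v^C(X^x_\tau)$, and combining with the previous paragraph gives the claim.

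To make the first paragraph rigorous I would introduce $\theta_\tau$, the shift operator, and write $\gamma^{MAX}_{\tau,A^C} = \tau + (\eta_{A^C}^{MAX} \circ \theta_\tau)$ and, on $\{\tau < \eta_{B^C}^{min}\}$, $\eta_{B^C}^{min} = \tau + (\eta_{B^C}^{min} \circ \theta_\tau)$; then $R(\gamma^{MAX}_{\tau,A^C},\eta_{B^C}^{min}) = e^{-r\tau} \big( R(\eta_{A^C}^{MAX},\eta_{B^C}^{min}) \circ \theta_\tau \big)$ on the relevant event, and the strong Markov property (applied to the joint Markov process consisting of $X$ together with the two Poisson clocks, which is legitimate since $\tau$ is a stopping time of the enlarged filtration and the two Poisson processes are independent of everything) yields $\E^x[\,\cdot \circ \theta_\tau \mid \mathcal{F}_\tau] = \E^{X^x_\tau}[\,\cdot\,]$.

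The main obstacle is the $M_\infty$ term: when neither $\eta_{A^C}^{MAX}$ nor $\eta_{B^C}^{min}$ is finite, $R$ pays $M_\infty$, which is genuinely $\mathcal{F}_\infty$-measurable and need not be a function of the current state, so the naive strong-Markov factorisation does not apply to it directly. This is precisely why Assumption \ref{assumption:technical} (and its transported version, Corollary \ref{cor:assumption}) is in the hypotheses: it says exactly that $e^{-r\gamma}\E^{X^x_\gamma}[M_\infty \1_{\eta^{MAX}_{A^C} \wedge \eta^{min}_{B^C} = \infty}] = \E^x[M_\infty \1_{\eta^{MAX}_{A^C} \wedge \eta^{min}_{B^C} = \infty} \mid \mathcal{F}_\gamma]$, which is the one piece of the factorisation we cannot get from the Markov property of $(X, \text{clocks})$ alone. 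So the proof splits $R$ into the finite-stopping part (handled by the shift/strong-Markov argument, which is routine) and the $M_\infty\1_{\cdots = \infty}$ part (handled verbatim by Corollary \ref{cor:assumption}), and the only real care needed is checking that the events $\{\tau < \eta_{B^C}^{min}\} \cap \{\tau < \gamma_{\tau,A^C}^{MAX}\}$, $\{\eta_{A^C}^{MAX} = \infty\}$ etc. line up correctly under the shift so that the indicator bookkeeping is consistent.
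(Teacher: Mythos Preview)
Your approach is essentially the paper's: introduce the shift operator $\theta_\tau$, write $R(\gamma^{MAX}_{\tau,A},\eta_B^{min})$ on the relevant event as $e^{-r\tau}\bigl(R(\eta_A^{MAX},\eta_B^{min})\circ\theta_\tau\bigr)$ for the finite-stopping part, apply the strong Markov property, and handle the $M_\infty$ piece separately via Corollary~\ref{cor:assumption}. The paper's proof in the appendix does exactly this, citing Corollary~\ref{cor:commonequality} for the identification $v^C(X^x_\tau)=J^{X^x_\tau}(\eta_{A}^{MAX},\eta_B^{min})$.

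Where you go astray is in your second paragraph. You correctly observe that Corollary~\ref{cor:commonequality} carries the hypothesis $A^C\cap B^C=\emptyset$, and you try to sidestep it. But your workaround misreads the text: the ``observation recorded just before Corollary~\ref{cor:commonequality}'' is $J^x(\eta^C_D,\eta^C_E)=J^x(\eta^{MAX}_D,\eta^{MAX}_E)=J^x(\eta^{min}_D,\eta^{min}_E)$, i.e.\ both players using the \emph{same} Poisson clock. The step you need, namely replacing $\eta^{MAX}_E$ by $\eta^{min}_E$, is precisely the content of Lemma~\ref{lem:comp1}, and that lemma requires $D$ and $E$ disjoint. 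Your claim that ``disjointness of $A^C,B^C$ is not needed for this particular equality'' is therefore wrong: if $A^C\cap B^C\neq\emptyset$ then under the common constraint a Poisson event in the intersection triggers simultaneous stopping (payoff $L$), whereas under the independent constraint simultaneous stopping has probability zero, so $J^x(\eta^C_{A^C},\eta^C_{B^C})\neq J^x(\eta^{MAX}_{A^C},\eta^{min}_{B^C})$ in general.

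In fact this shows the lemma itself, as stated, tacitly relies on $A^C\cap B^C=\emptyset$: the paper's own proof invokes Corollary~\ref{cor:commonequality} (and Corollary~\ref{cor:assumption}, which inherits the same hypothesis). This is harmless in context, because the lemma is only ever applied inside Theorem~\ref{thm:main}, whose hypothesis $v^C\wedge l\le u$ is equivalent to $A^C\cap B^C=\emptyset$. So the right fix is not to argue around the disjointness but simply to note that it is available wherever the lemma is used; your instinct that something was missing was sound, but the resolution is to accept the extra hypothesis rather than to claim it is unnecessary.
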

\begin{proof}The proof follows by strong Markov property and is given in the Appendix.\end{proof}

\begin{proposition}\label{prop:main}
    {\bDGH{Suppose $M= L$.}}
    Suppose that the Dynkin game under the common Poisson constraint satisfies Assumption \ref{assumption:existence} and Assumption \ref{assumption:technical} and let the solution be given by $(v^C, A^C, B^C)$.

    Suppose further that the value function is such that $v^C \wedge l \leq u$.

    Then, the Dynkin game under the independent Poisson constraint has the same solution as the game under the common Poisson constraint. That is, $v^I(x)=v^C(x)$ for every $x\in \sX$, and moreover, $(\eta^{M}_{A^C},\eta^{m}_{B^C})$ is a saddle point of the game under the independent Poisson constraint.
\end{proposition}

\begin{proof}
Write $A=A^C$ and $B=B^C$.
Fix any $x\in \sX$. We aim to prove that $J^x(\tau,\eta_B^{m})\leq J^x(\eta_A^{M},\eta_B^{m})$ and $J^x(\eta_A^{M},\eta_B^{m})\leq J^x(\eta_A^{M},\sigma)$ for any $\tau\in \mathcal{R}^{M}(\lambda)$ and $\sigma \in \mathcal{R}^{m}(\lambda)$.  Note that
if $v^C \wedge l \leq u$ then \bDGH{$\{ x : v^C(x)\geq l(x)> u(x)\}=\emptyset$} so that $A=\{x:v^C(x)< l(x)\}$.

Step 1: Fix any $\tau \in \mathcal{R}^{M}(\lambda)$. We prove that $J^x(\tau,\eta_B^{m})\leq J^x(\gamma_{\tau,A}^{M},\eta_B^{m})$. We have:
   \begin{eqnarray*}
J^x(\tau,\eta_B^{m})&=&\E^x[L_\tau\1_{\{\tau<\eta_B^{m}\}}+U_{\eta_B^{m}}\1_{\{\eta_B^{m}<\tau\}}+M_\infty\1_{\{\tau=\eta_B^{m}=\infty\}}]\\
      &=& \E^x[e^{-r\tau}l(X_\tau)\1_{\{\tau<\eta_B^{m}\}}\1_{\{X_\tau\notin A\}}+L_{\tau}\1_{\{\tau<\eta_B^{m}\}}\1_{\{X_\tau \in A\}}+U_{\eta_B^{m}}\1_{\{\eta_B^{m}<\tau\}}\\&&\hspace{77mm}+M_\infty\1_{\{\tau=\eta_B^{m}=\infty\}}] \\
      &\bDGH{\leq} &\E^x[e^{-r\tau}v^C(X_\tau)\1_{\{\tau<\eta_B^{m}\}}\1_{\{X_\tau \notin A\}}+L_\tau\1_{\{\tau<\eta_B^{m}\}}\1_{\{X_\tau\in A\}}+U_{\eta_B^{m}}\1_{\{\eta_B^{m}<\tau\}}\\&&\hspace{86mm}+M_\infty\1_{\{\tau=\eta_B^{m}=\infty\}}] \\
      & = & \E^x[(L_{\gamma_{\tau,A}^{M}}\1_{\{\gamma_{\tau,A}^{M}<\eta_B^{m}\}}+U_{\eta_B^{m}}\1_{\{\gamma_{\tau,A}^{M}>\eta_B^{m}\}}+M_\infty\1_{\{\gamma_{\tau,A}^{M}=\eta_B^{m}=\infty\}})\1_{\{\tau<\eta_B^{m}\}}\1_{\{X_\tau \notin A\}}\\&&\hspace{21mm}+L_\tau\1_{\{\tau<\eta_B^{m}\}}\1_{\{X_\tau\in A\}}+U_{\eta_B^{m}}\1_{\{\eta_B^{m}<\tau\}}+M_\infty\1_{\{\tau=\eta_B^{m}=\infty\}}]
\end{eqnarray*}
where the inequality is by definition of $A$, and the subsequent equality is by Lemma \ref{lem:SMPtau}. Then, using that $\tau\leq \gamma_{\tau,A}^{M}$ and that $\Prob( \tau = \eta_B^{m} < \infty)=0$, we find
\begin{eqnarray*}
\lefteqn{J^x(\tau,\eta_B^{m})}\\      
    & \leq & \E^x[L_{\gamma_{\tau,A}^{M}}\1_{\{\gamma_{\tau,A}^{M}=\tau<\eta_B^{m}\}\cup \{\gamma_{\tau,A}^{M}<\eta_B^{m},\tau\neq \gamma_{\tau,A}^{M}\}}\\&&\hspace{15mm}+U_{\eta_B^{m}}\1_{\{\tau<\eta_B^{m}<\gamma_{\tau,A}^{M}\}\cup \{\eta_B^{m}<\tau\leq \gamma_{\tau,A}^{M}\}}+M_\infty\1_{\{\tau\leq \gamma_{\tau,A}^{M}=\eta_B^{m}=\infty\}}]\\
    &=& \E^x[L_{\gamma_{\tau,A}^{M}}\1_{\{\gamma_{\tau,A}^{M}<\eta_B^{m}\}}+U_{\eta_B^{m}}\1_{\{\eta_B^{m}< \gamma_{\tau,A}^{M}\}}+M_\infty\1_{\{\gamma_{\tau,A}^{M}=\eta_B^{m}=\infty\}}]\\&=&J^x(\gamma_{\tau,A}^{M},\eta_B^{m}).
\end{eqnarray*}

Step 2: Following step 1, any $\tau \in \mathcal{R}^{M}(\lambda)$ is no better than $\gamma_{\tau,A}^{M}$, therefore it is sufficient to prove that $\eta_A^{M}$ is at least as good a strategy as $\gamma_{\tau,A}^{M}$, for any $\tau$.

Note that $\gamma_{\tau,A}^{M}$ satisfies $\mathbb{P}^x(\gamma_{\tau,A}^{M}=\infty\,\textrm{or}\,X_{\gamma_{\tau,A}^{M}}\in A)=1$. Define $\mathcal{R}^{M}_A(\lambda)=\{\tau\in \mathcal{R}^{M}(\lambda):\,\mathbb{P}^x(\gamma_{\tau,A}^{M}=\infty\,\textrm{or}\,X_{\gamma_{\tau,A}^{M}}\in A)=1\}$, so for any $\tau \in \mathcal{R}^{M}(\lambda)$ the corresponding $\gamma_{\tau,A}^{M}\in \mathcal{R}^{M}_A(\lambda)$. It suffices to prove that $\sup_{\tau \in \mathcal{R}^{M}_A(\lambda)}J^x(\tau,\eta_B^{m})=J^x(\eta_A^{M},\eta_B^{m})$ holds.
We have:
\begin{eqnarray*}
    \sup_{\tau \in \mathcal{R}^{M}_A(\lambda)}J^x(\tau,\eta_B^{m})&=& \sup_{\tau \in \mathcal{R}^{M}_A(\lambda)}J^x(\tau,\eta_B^{M})\\
    &\leq & \sup_{\tau \in \mathcal{R}^{M}(\lambda)}J^x(\tau,\eta_B^{M})\\
    &=& J^x(\eta_A^{M},\eta_B^{M})\\
    &=& J^x(\eta_A^{M},\eta_B^{m}),
\end{eqnarray*}
where the first equality holds by Lemma \ref{lem:comp1}, the inequality holds as the supremum is taken over a larger set, the second equality holds by Assumption \ref{assumption:existence}, and the final equality is by Corollary \ref{cor:commonequality}.

It is clear that $\eta_A^{M}\in \mathcal{R}^{M}_A(\lambda)$, so we also have $\sup_{\tau \in \mathcal{R}^{M}_A(\lambda)}J^x(\tau,\eta_B^{m})\geq J^x(\eta_A^{M},\eta_B^{m})$. Therefore $\sup_{\tau \in \mathcal{R}^{M}_A(\lambda)}J^x(\tau,\eta_B^{m})=J^x(\eta_A^{M},\eta_B^{m})$ holds and we conclude.

The proof of $J^x(\eta^{M}_A,\eta_B^{m})\leq J^x(\eta_A^{M},\sigma)$ follows by a similar approach.

Hence, $(\eta_A^{M},\eta_B^{m})$ is a saddle point for the game under the independent constraint when $X_0=x$, with game value $v^I(x)=v^C(x)$ by Corollary \ref{cor:commonequality}. Given that the above argument holds for any $x\in \sX$ we can conclude that these properties hold for any $x\in \sX$.
\end{proof}

\bDGH{
\begin{lemma}
\label{lem:open}
Suppose that $v^C$ is continuous, $l$ is lower semicontinuous and $u$ is upper semicontinuous. Then $A^C$ and $B^C$ are both open.
\end{lemma}
\begin{proof}
Write $A^C = A^C_1 \cup A^C_2$ where $A^C_1 = \{x : v^C < l(x) \}$ and $A^C_2 = \{ x : v^C(x) \geq l(x) > u(x) \}$. 

{\bDGH{Suppose $f$ is lower semicontinuous and $g$ is upper semicontinuous.}}
Then the set $\{x : g(x)<f(x) \}$ is open. To see this note that if $z_n \in \{z: f(z) \leq g(z) \}$ and if $z_n \rightarrow z$ then $f(z) \leq \lim_n f(z_n) \leq \lim_n g(z_n) \leq g(z)$ so that $\{z : f(z) \leq g(z) \}$ is closed. Applying this twice and using that $v^C$ is both upper and lower semicontinuous, we get that $A^C_1$ and $B^C$ are open.

Now we want to show that $A^C$ is open.
If $x \in A^C_1$ then since $A^C_1$ is open there is a neighbourhood $N_1$ of $x$ such that $N_1 \subseteq A^C_1 \subseteq A^C$. Now suppose $x \in A^C_2 \setminus A^C_1$. Since $v^C(x)>u(x)$ and $l(x)>u(x)$ there exist neighbourhoods $N_v$ and $N_l$ of $x$ such that $v^C>u$ on $N_v$ and $l>u$ on $N_l$. Take $y \in N_v \cap N_l$. Either $v^C(y) \geq l(y)$ and $y \in A^C_2$ or $v^C(y)<l(y)$ and $y \in A^C_1$. It follows that $N_v \cap N_l \subseteq A^C$. Putting the two cases together we find that $A^C$ is open. 
\end{proof}

\begin{lemma}
\label{lem:sets}
$A^C \cap B^C= \emptyset$ if and only if $D^C := \{x : v^C \wedge l(x) \leq u(x) \} = \sX$.
\end{lemma}

\begin{proof}
Since $A_2^C \subseteq B^C$ we have that
$A^C \cap B^C = (A^C_1 \cup A^C_2) \cap B^C = (A^C_1 \cap B^C) \cup A^C_2$. But $A^C_1 \cap B^C= \{x: u(x)<v^C(x)< l(x) \}$ so that
$A^C \cap B^C = \{x: u(x)<v^C(x)< l(x) \} \cup \{x: u(x)<l(x) \leq v^C(x) \} = \{ u < v \wedge l \}$. In particular, $A^C \cap B^C =\emptyset$ if and only if $\{x : v^C(x) \wedge l(x) \leq u(x)\} = \sX$. 
\end{proof}
}


\begin{theorem}\label{thm:necessity}
Suppose $M= L$. Suppose that $l$ is lower semicontinuous and $u$ is upper semicontinuous.

Suppose that the Dynkin game under the common Poisson constraint satisfies Assumption \ref{assumption:existence} and Assumption \ref{assumption:technical} and let the solution be given by $(v^C, A^C, B^C)$ where 
$A^C=\{x:v^C(x)< l(x) \}\cup\{x:v^C(x) \geq l(x)> u(x)\}$, $B^C=\{x:v^C(x)> u(x)\}$.
Suppose that $v^C$ is continuous.

Then the following are equivalent.
\begin{enumerate}
    \item $v^C \wedge l \leq u$.
    \item $A^C \cap B^C = \emptyset$.
    \item $J^x(\eta^{M}_{A^C},\eta_{B^C}^{m})=J^x(\eta^C_{A^C},\eta^C_{B^C})$ for every $x\in \sX$.
\end{enumerate}
Moreover, if any of the above conditions hold, then the Dynkin game under the independent Poisson constraint has the same solution as the game under the common Poisson constraint. That is, $v^I(x)=v^C(x)$ for every $x\in \sX$, and $(\eta^{M}_{A^C},\eta^{m}_{B^C})$ is a saddle point of the game under the independent Poisson constraint.
\end{theorem}
\begin{proof}
    Write $A=A^C$ and $B=B^C$. The equivalence of 1. and 2. follows from Lemma~\ref{lem:sets}. The fact that 1. implies 3. and the final conclusion is Proposition~\ref{prop:main}. So, it only remains to prove that 3. implies 2. This follows by contradiction: we show that
    if $A\cap B$ is nonempty then $J^x(\eta^{M}_{A},\eta_{B}^{m})<J^x(\eta^C_{A},\eta^C_{B})$. 

    Suppose that $A\cap B$ is nonempty, and fix arbitrary $x\in \sX$. By Lemma \ref{lem:open} $A\cap B$ is open hence must have a positive Lebesgue measure. By Lemma \ref{lem:comp1} we have $v^C(x)=J^x(\eta^{C}_{A},\eta_{B }^{C})=J^x(\eta^{C}_{A},\eta_{B\setminus A }^{C})={J}^x(\eta^{M}_{A},\eta_{B\setminus A }^{m})$, so to seek a contradiction it suffices to prove that $J^x(\eta^{M}_{A},\eta_{B}^{m})<{J}^x(\eta^{M}_{A},\eta_{B\setminus A }^{m})$.    

    Consider a modified game with payoffs $\tilde{l}=l$ and $\tilde{u}(x)=v^C(x)\1_{x\in A\cap B}+u(x)\1_{x\notin A\cap B}$, and denote the expected payoff of this modified game by $\tilde{J}$. By construction we have $\tilde{u} \geq u$ and hence  $J^x(\eta^{M}_{A},\eta_{B}^{m})<\tilde{J}^x(\eta^{M}_{A},\eta_{B}^{m})$.
    
    We will prove that, under the assumption that $A\cap B$ is nonempty, ${J}^x(\eta^{M}_{A},\eta_{B\setminus A}^{m})=\tilde{J}^x(\eta^{M}_{A},\eta_{B}^{m})$. We have
    \begin{eqnarray*}
        \lefteqn{\tilde{J}^x(\eta^{M}_{A},\eta_{B}^{m})}\\&=&\E^x[{R}(\eta^{M}_{A},\eta_{B}^{m})\1_{\{X_{\eta^{M}_{A}\wedge \eta_{B}^{m}}\notin A\cap B\}\cup \{\eta_{A}^M<\eta_{B}^m\}}\\&&\hspace{24mm}+e^{-r\eta_{B}^m}v^C(X_{\eta_{B}^m})\1_{\{X_{\eta_{B}^{m}}\in A\cap B\}\cap \{\eta_{A}^M>\eta_{B}^m\}}+M_\infty\1_{\eta_{A}^M=\eta_{B}^m=\infty}]\\ 
        &=&\E^x[{R}(\eta^{M}_{A},\eta_{B\setminus A}^{m})\1_{\{X_{\eta^{M}_{A}\wedge \eta_{B}^{m}}\notin A\cap B\}\cup \{\eta_{A}^M<\eta_{B}^m\}}\\&&\hspace{24mm}+e^{-r\eta_{B}^m}v^C(X_{\eta_{B}^m})\1_{\{X_{\eta_{B}^{m}}\in A\cap B\}\cap \{\eta_{A}^M>\eta_{B}^m\}}+M_\infty\1_{\eta_{A}^M=\eta_{B}^m=\infty}]
 \end{eqnarray*}
where in the second equality we use the fact that $\eta_A^M\wedge \eta_{B\setminus A}^m=\eta_A^M\wedge \eta_B^m$ on the set $\{X_{\eta^{M}_{A}\wedge \eta_{B}^{m}}\notin A\cap B\}\cup \{\eta_{A}^M<\eta_{B}^m\}$,
Then, applying the strong Markov property and using $v^C(x)=J^x(\eta^{M}_{A},\eta_{B\setminus A }^{m})$ we find   
\begin{eqnarray*}
\lefteqn{\tilde{J}^x(\eta^{M}_{A},\eta_{B}^{m})}\\    
        &=&\E^x[R(\eta^{M}_{A},\eta_{B\setminus A}^{m})\1_{\{X_{\eta^{M}_{A}\wedge \eta_{B}^{m}}\notin A\cap B\}\cup \{\eta_{A}^M<\eta_{B}^m\}}\\
        &&+(L_{\eta_{A}^M}\1_{\eta_A^M<\eta_{B\setminus A}^m}+U_{\eta_{B\setminus A}^m}\1_{\eta_{B\setminus A}^m<\eta_{A}^M}+M_\infty\1_{\eta_{B\setminus A}^m=\eta_{A}^M=\infty})\1_{\{X_{\eta_{B}^{m}}\in A\cap B\}\cap \{\eta_{A}^M>\eta_{B}^m\}}\\&&\hspace{93mm}+M_\infty\1_{\eta_{A}^M=\eta_{B}^m=\infty}]\\
        &=&\E^x[{R}(\eta^{M}_{A},\eta_{B\setminus A}^{m})\1_{\{X_{\eta^{M}_{A}\wedge \eta_{B}^{m}}\notin A\cap B\}\cup \{\eta_{A}^M<\eta_{B}^m\}}+R(\eta^{M}_{A},\eta_{B\setminus A}^{m})\1_{\{X_{\eta_{B}^{m}}\in A\cap B\}\cap \{\eta_{A}^M>\eta_{B}^m\}}\\&&\hspace{90mm}+M_\infty\1_{\eta_{A}^M=\eta_{B\setminus A}^m=\infty}]\\
        &=& {J}^x(\eta^{M}_{A},\eta_{B\setminus A}^{m}).
    \end{eqnarray*}   

Hence we conclude that  $J^x(\eta^{M}_{A},\eta_{B}^{m})< \tilde{J}^x(\eta^{M}_{A},\eta_{B}^{m})=J^x(\eta^C_{A},\eta^C_{B})$ for any $x\in \sX$ and this leads to a contradiction. Therefore 3. implies 2. and the proof is complete.
\end{proof}

\begin{remark}\label{rem:necessity}
{
    The equivalence of 2. in the above theorem may not hold if we make different choices of $A^C$ or $B^C$, \bDGH{even if those different choices also define a saddlepoint}.} 
    For example, if we take $\tilde{A}^C=\{x:v^C(x)\leq  l(x) \}\cup\{x:v^C(x)>l(x)> u(x)\}$ instead, then it could be the case that there exists some $x$ such that $u(x)\leq v^C(x)\leq l(x)$ holds, so that $\tilde{A}^C\cap B^C\neq \emptyset$, but the game under the independent Poisson constraint still has the same value as the game under the common Poisson constraint. In particular, without some restrictions on the choices of stopping sets, the condition $A^C \cap B^C = \emptyset$ of Theorem~\ref{thm:necessity} is not necessary for the conclusion of Proposition~\ref{prop:main} to hold. 
\end{remark}

Theorem \ref{thm:necessity} tells us that, subject to some regularity conditions at infinity, if we have solved a Dynkin game under the common Poisson constraint and there exist optimal stopping regions for the two players which do not overlap, then the corresponding game under the independent Poisson constraint is also solved, with the same value and the same optimal stopping regions. The converse of this statement is not true. That is, if we have solved the game under the independent Poisson constraint and the optimal stopping regions are disjoint, then the corresponding game under common Poisson constraint may not have the same value. This is the subject of the next subsection.

\subsection{From the independent constraint to the common constraint}
\label{ssec:itoc}
Our goal now is to consider what happens if we begin with a solution of the game under the independent  constraint and ask for sufficient conditions for the solution to also be the solution of the game under the common constraint.
We begin by making a similar assumption to Assumption \ref{assumption:existence} in Section~\ref{ssec:ctoi}.


\begin{assumption}\label{assumption:existence1}
    The Dynkin game under the independent Poisson constraint has a value $\{v^I(x)\}_{x\in \sX}$, with saddle point $(\eta^{M}_{A^I},\eta^{m}_{B^I})$, where $A^I=\{x:v^I(x)< l(x) \}$ and \bDGH{$B^I=\{x:v^I(x)> u(x)\}$}.
\end{assumption}

Note that the optimal stopping set for the \textsc{sup} player has a different form to that in Assumption~\ref{assumption:existence}, whereas the optimal stopping set for the \textsc{inf} player has the same form. This asymmetry arises from our convention about what happens when both players stop simultaneously.

\begin{assumption}\label{assumption:technical1}
    For every $x\in \sX$, the random variable $M_\infty$ and stopping times $\eta_{A^I}^{M}$, $\eta_{B^I}^{m}$ satisfy $e^{-r\gamma}\E^{X^x_\gamma}[M_\infty\1_{\eta_{A^I}^{M}\wedge \eta_{B^I}^{m}=\infty}]=\E^x[M_\infty\1_{\eta_{A^I}^{M}\wedge \eta_{B^I}^{m}=\infty}|\mathcal{F}_{\gamma}]$, for any $\gamma \in \mathcal{R}^{M}(\lambda)$ or $\mathcal{R}^{m}(\lambda)$, where $A^I$ and $B^I$ are as defined in Assumption \ref{assumption:existence1}.
\end{assumption}

By Lemma \ref{lem:comp1}, we have the following analogue of Corollary~\ref{cor:commonequality}:
\begin{corollary}\label{cor:indepequality}
    {Suppose that the Dynkin game under the independent Poisson constraint satisfies Assumption \ref{assumption:existence1}, with $A^I\cap B^I=\emptyset$.} Then
$\eta_{A^I}^C\wedge\eta_{B^I}^C$ and $\eta_{A^I}^{M}\wedge \eta_{B^I}^{m}$ are equally distributed and $v^I(x)=J^x(\eta^{M}_{A^I},\eta^{m}_{B^I})=J^x(\eta^C_{A^I},\eta^C_{B^I})$ for every $x\in \sX$.
\end{corollary}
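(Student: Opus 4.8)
The plan is to mimic the structure already established by Corollary~\ref{cor:commonequality}, since the statement of Corollary~\ref{cor:indepequality} is its exact mirror image with the roles of the common and independent constraints interchanged. The two ingredients that drove the proof of Corollary~\ref{cor:commonequality} were (i) Lemma~\ref{lem:comp1}, which is symmetric in the two players and makes no reference to which constraint game we started from, and (ii) the identity $J^x(\eta^C_D,\eta^C_E) = J^x(\eta^{MAX}_D,\eta^{MAX}_E) = J^x(\eta^{min}_D,\eta^{min}_E)$ valid for arbitrary disjoint (indeed arbitrary) Borel sets $D,E$. Both ingredients are equally available here, so the argument transfers essentially verbatim.

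Concretely, I would first invoke Assumption~\ref{assumption:existence1} to fix the sets $A^I = \{x : v^I(x) < l(x)\}$ and $B^I = \{x : v^I(x) \geq u(x)\}$, which are disjoint by hypothesis. Then apply Lemma~\ref{lem:comp1} with the choice $\tau = \eta^{MAX}_{A^I}$ and $E = B^I$: the hypothesis $\mathbb{P}^x(\tau = \infty \text{ or } X_\tau \in A^I) = 1$ holds because $\eta^{MAX}_{A^I}$ is by construction either infinite or lands $X$ in $A^I$, and $A^I \cap B^I = \emptyset$ supplies the disjointness needed by the lemma. The conclusion of the lemma then gives that $(\eta^{MAX}_{A^I} \wedge \eta^{min}_{B^I}, X_{\eta^{MAX}_{A^I} \wedge \eta^{min}_{B^I}})$ and $(\eta^{MAX}_{A^I} \wedge \eta^{MAX}_{B^I}, X_{\eta^{MAX}_{A^I} \wedge \eta^{MAX}_{B^I}})$ have the same distribution, and that $J^x(\eta^{MAX}_{A^I}, \eta^{min}_{B^I}) = J^x(\eta^{MAX}_{A^I}, \eta^{MAX}_{B^I})$. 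Combining the distributional identity with the obvious coupling identity between $\eta^{MAX}$ and $\eta^C$ event times (both are first hitting times along a Poisson process of rate $\lambda$ restricted to a Borel set, so the law of the hitting time together with the state of $X$ there is identical), one gets $\eta^{MAX}_{A^I} \wedge \eta^{min}_{B^I} \overset{d}{=} \eta^C_{A^I} \wedge \eta^C_{B^I}$ and $J^x(\eta^{MAX}_{A^I}, \eta^{MAX}_{B^I}) = J^x(\eta^C_{A^I}, \eta^C_{B^I})$. Finally, $v^I(x) = J^x(\eta^{MAX}_{A^I}, \eta^{min}_{B^I})$ is precisely the statement that $(\eta^{MAX}_{A^I}, \eta^{min}_{B^I})$ is a saddle point of the independent-constraint game, which is Assumption~\ref{assumption:existence1}. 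Chaining the equalities yields $v^I(x) = J^x(\eta^{MAX}_{A^I}, \eta^{min}_{B^I}) = J^x(\eta^C_{A^I}, \eta^C_{B^I})$, as claimed, for every $x \in \sX$.

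I do not anticipate a genuine obstacle here — the result is a routine corollary of Lemma~\ref{lem:comp1}, exactly as its companion Corollary~\ref{cor:commonequality} was. The only mild point of care is bookkeeping around the terminal term $M_\infty \1_{\{\tau = \sigma = \infty\}}$ when transferring equality of expected payoffs between the three families of hitting times: one needs that $M_\infty$ is independent of all the Poisson signals (which is part of the standing modelling assumptions) so that its contribution depends only on the law of $\eta^{MAX}_{A^I} \wedge \eta^{min}_{B^I}$ and the event $\{\text{both infinite}\}$, and this is already subsumed in the way $J^x$ is evaluated inside the proof of Lemma~\ref{lem:comp1}. Since that lemma is stated and proved independently of which constraint game is under consideration, nothing new is required. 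The entire proof can therefore be written in two or three lines: ``Taking $\tau = \eta^{MAX}_{A^I}$ and $E = B^I$ in Lemma~\ref{lem:comp1}, and using $J^x(\eta^C_D, \eta^C_E) = J^x(\eta^{MAX}_D, \eta^{MAX}_E)$ for any Borel sets $D, E$, together with Assumption~\ref{assumption:existence1}, the claim follows.''
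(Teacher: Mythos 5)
Your proposal is correct and follows essentially the same route as the paper: the paper derives Corollary~\ref{cor:indepequality} exactly as it derived Corollary~\ref{cor:commonequality}, namely by applying Lemma~\ref{lem:comp1} with $\tau=\eta^{MAX}_{A^I}$ and $E=B^I$ (using the disjointness hypothesis), invoking the identity $J^x(\eta^{MAX}_D,\eta^{MAX}_E)=J^x(\eta^C_D,\eta^C_E)$, and reading off $v^I(x)=J^x(\eta^{MAX}_{A^I},\eta^{min}_{B^I})$ from the saddle-point property in Assumption~\ref{assumption:existence1}. Your remark about $M_\infty$ being handled inside the proof of Lemma~\ref{lem:comp1} via its independence from the Poisson signals is also consistent with how the paper treats that term.
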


\bDGH{
\begin{lemma}
\label{lem:setsi}
$D^I := \{x : v^I \wedge l(x) \leq u(x) \} = \sX$ if and only if $A^I \cap \bDGH{B^I} = \emptyset$ and $\{x:v^I(x)\geq l(x)>u(x)\}=\emptyset$.
\end{lemma}}

\begin{proposition}\label{prop:main1}
    Suppose that the Dynkin game under the independent Poisson constraint satisfies Assumption \ref{assumption:existence1} and Assumption \ref{assumption:technical1}.

    Suppose that the value function is such that $v^I \wedge l \leq u$.


    Then, the Dynkin game under the common Poisson constraint has the same value as the game under the independent Poisson constraint. That is, $v^C(x)=v^I(x)$ for every $x\in \sX$. Moreover, $(\eta^{C}_{A^I},\eta^{C}_{B^I})$ is a saddle point of the game under the common Poisson constraint.
\end{proposition}

Recall that, in the proof of Proposition \ref{prop:main}, the first step is to prove that, given any strategy $\tau\in \mathcal{R}^{M}(\lambda)$, it is always better for the \textsc{sup} player to choose $\gamma_{\tau,A^I}^{M}$. Under the common constraint this is not always the case, because the \textsc{sup} player may benefit from preempting the \textsc{inf} player's stopping decision. For arbitrary $\tau\in \mathcal{R}^C(\lambda)$, consider the set $\{\tau=\eta_{B^I}^C<\infty\}$. It is clear that $X_\tau\in B^I$ and $\gamma_{\tau,A^I}^C>\tau$ holds on this set (assuming $A^I\cap B^I=\emptyset$), and the \textsc{sup} player can either stop at $\tau$ and get $L_\tau$, or stop later and get $U_\tau$. So $\gamma_{\tau,A^I}^C$ is better in this case only if $U_\tau\geq L_\tau$, which is guaranteed under the extra assumption that the set $\{x:v^I(x)\geq l(x)>u(x)\}$ is empty.
This explains why, although the statement of Proposition~\ref{prop:main1} is very similar to that of Proposition \ref{prop:main}, the subsequent corollaries are different.




As an example in the next section will show, the condition that $\{x:v^I(x) \geq l(x)>u(x)\}$ is empty is necessary, and the fact that $A^I$ and $B^I$ are disjoint alone is not sufficient for $(v^I, A^I,B^I)$ to be a solution of the common constraint problem.

\begin{proof}[Proof of Proposition~\ref{prop:main1}]
Fix any $x\in \sX$. We will prove that, given any strategy $\tau \in \mathcal{R}^C(\lambda)$, $\gamma_{\tau,A^I}^C$ is a better strategy for the \textsc{sup} player. For typographical reasons, for the rest of the proof we write $A$ and $B$ as shorthand for $A^I$ and $B^I$.

Fix any $\tau \in \mathcal{R}^{C}(\lambda)$. By a similar argument as in the proof of step 1 in Proposition \ref{prop:main}, we have:
    \begin{eqnarray*}
      \lefteqn{J^x(\tau,\eta_B^{C})}\\&=&\E^x[L_\tau\1_{\{\tau<\eta_B^{C}\}}+U_{\eta_B^{C}}\1_{\{\eta_B^{C}<\tau\}}+L_\tau\1_{\{\tau=\eta_B^C<\infty\}}+M_\infty\1_{\{\tau=\eta_B^C=\infty\}}]\\
     &\bDGH{\leq}&\E^x[e^{-r\tau}v^I(X_\tau)\1_{\{\tau<\eta_B^{C}\}}\1_{\{X_\tau \notin A\}}+L_\tau\1_{\{\tau<\eta_B^{C}\}}\1_{\{X_\tau\in A\}}+U_{\eta_B^{C}}\1_{\{\eta_B^{C}<\tau\}}\\&&\hspace{35mm}+U_{\eta_B^{C}}\1_{\{\tau=\eta_B^C<\infty\}}\1_{\{X_{\tau}\notin A\}}+M_\infty\1_{\{\tau=\eta_B^C=\infty\}}]\\
    &=& \E^x[L_{\gamma_{\tau,A}^{C}}\1_{\{\gamma_{\tau,A}^{C}\leq \eta_B^{C}, \gamma_{\tau,A}^C<\infty\}}+U_{\eta_B^{C}}\1_{\{\eta_B^{C}< \gamma_{\tau,A}^{C}\}}+M_\infty\1_{\{\gamma_{\tau,A}^C=\eta_B^C=\infty\}}]\\&=&J^x(\gamma_{\tau,A}^{C},\eta_B^{C}),
\end{eqnarray*}
where, for the inequality, the set $\{\tau=\eta_B^C<\infty\}\cap \{X_\tau\in A\}$ is empty, and $L_\tau\leq U_\tau$ holds on the set $\{\tau=\eta_B^C<\infty\}\cap \{X_\tau\notin A\}$ by the assumption that $v^I\wedge l\leq u$ and Lemma \ref{lem:setsi}.

The proof of $J^x(\eta_A^C,\gamma_{\sigma,B}^C)\leq J^x(\eta_A^C,\sigma)$ does not rely on the extra assumption that the set $\{x:v^I(x)\geq l(x)>u(x)\}$ is empty, hence follows a similar approach as in Step 1 of Proposition \ref{prop:main}. Then, by similar argument as in Step 2 of Proposition \ref{prop:main}, the saddle point property of $(\eta^{C}_{A},\eta^{C}_B)$ follows, and the value of the game under the common constraint is $v^C(x)=v^I(x)$ by Corollary \ref{cor:indepequality}. Hence we conclude.
\end{proof}

\begin{theorem}\label{thm:necessityI}
Suppose $M=L$. Suppose that $l$ is lower semicontinuous and $u$ is upper semicontinuous.

Suppose that the Dynkin game under the independent Poisson constraint satisfies Assumption \ref{assumption:existence1} and Assumption \ref{assumption:technical1} and let the solution be given by $(v^I, A^I, B^I)$ where 
$A^I=\{x:v^I(x)< l(x) \}$, $B^I=\{x:v^I(x)> u(x)\}$
Suppose that $v^I$ is continuous.

Then the following are equivalent.
\begin{enumerate}
    \item $v^I \wedge l \leq u$.
    \item $A^I \cap B^I = \emptyset$ and $\{x:v^I(x)\geq l(x)>u(x)\}=\emptyset$.
    \item $J^x(\eta^{C}_{A^I},\eta^{C}_{B^I})=J^x(\eta^{M}_{A^I},\eta^{m}_{B^I})$ for every $x\in \sX$.    
\end{enumerate}
Moreover, if any of the above conditions hold, then the Dynkin game under the common Poisson constraint has the same solution as the game under the independent Poisson constraint. That is, $v^C(x)=v^I(x)$ for every $x\in \sX$, and $(\eta^{C}_{A^I},\eta^{C}_{B^I})$ is a saddle point of the game under the common Poisson constraint.    
\end{theorem}
\begin{proof}
    The equivalence of 1. and 2. is by Lemma \ref{lem:setsi} and Proposition \ref{prop:main1} states that 1. implies 3. and the final conclusion. The proof that 3. implies 2. is similar to that in Theorem \ref{thm:necessity} and we seek a contradiction.  Again we consider a modified game defined by $\tilde{l}=l$ and $\tilde{u}(x)=v^I(x)\1_{x\in (A^I\cap B^I)\cup \{x:v^I(x)\geq l(x)>u(x)\}}+u(x)\1_{x\notin (A^I\cap B^I)\cup \{x:v^I(x)\geq l(x)>u(x)\}}$. Then, by strong Markov property, the assumption that 2. fails implies that  $J^x(\eta^{C}_{A^I},\eta^{C}_{B^I})>J^x(\eta^{M}_{A^I},\eta^{m}_{B^I})$.
    \end{proof}

A sufficient condition for the set $\{x:v^I(x)\bDGH{\geq}l(x)>u(x)\}$ to be empty is $l(x)\leq u(x)$, under which the Dynkin games under the common and the independent constraints are equivalent. However, this order condition is not necessary for $\{x:v^I(x)\geq l(x)>u(x)\}=\emptyset$. We will see an example in the next section where the order condition fails, but the hypotheses of Theorem~\ref{thm:necessityI} are satisfied.

\begin{corollary}
Suppose that the order condition $l \leq u$ holds. 

Suppose that Assumptions~\ref{assumption:existence} and \ref{assumption:technical} hold.
If $(v^C, A^C, B^C)$ is a solution of the game under the common Poisson constraint then $(v^C, A^C, B^C)$ is also the solution of the game under the independent Poisson constraint.

Suppose that Assumptions~\ref{assumption:existence1} and \ref{assumption:technical1} hold.  
If $(v^I, A^I, B^I)$ is a solution of the game under the independent Poisson constraint then $(v^I, A^I, B^I)$ is also the solution of the game under the common Poisson constraint.
\end{corollary}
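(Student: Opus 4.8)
The plan is to observe that this corollary is simply a repackaging of Theorems~\ref{thm:main} and \ref{thm:main1}, with the order condition $l\le u$ used only to verify the hypothesis $v\wedge l\le u$ in each direction. No new machinery is needed; the entire argument is the verification of a trivial chain of inequalities followed by an appeal to the relevant theorem.

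For the first assertion I would argue as follows. Under the order condition we have, pointwise on $\sX$, that $v^C\wedge l\le l\le u$, so the hypothesis of Theorem~\ref{thm:main} is met. Since Assumptions~\ref{assumption:existence} and \ref{assumption:technical} are assumed to hold and $(v^C,A^C,B^C)$ is assumed to solve the common-constraint game, Theorem~\ref{thm:main} applies directly and gives $v^I\equiv v^C$ on $\sX$ together with the fact that $(\eta^{MAX}_{A^C},\eta^{min}_{B^C})$ is a saddle point of the independent-constraint game; that is, $(v^C,A^C,B^C)$ solves the independent-constraint game. (Equivalently, one could note that $l\le u$ forces $A^C\cap B^C=\emptyset$ and invoke Corollary~\ref{cor:main}, since $\{x:v^C(x)>l(x)>u(x)\}$ and $\{x:l(x)>v^C(x)\ge u(x)\}$ are both empty when $l\le u$.)

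For the second assertion I would symmetrically note that $v^I\wedge l\le l\le u$, so the hypothesis of Theorem~\ref{thm:main1} holds; moreover $l\le u$ makes the set $\{x:v^I(x)>l(x)>u(x)\}$ empty, so the stronger hypothesis of Corollary~\ref{cor:disjointI} is also satisfied and $A^I\cap B^I=\emptyset$. With Assumptions~\ref{assumption:existence1} and \ref{assumption:technical1} in force and $(v^I,A^I,B^I)$ assumed to solve the independent-constraint game, Theorem~\ref{thm:main1} (or Corollary~\ref{cor:disjointI}) yields $v^C\equiv v^I$ and that $(\eta^{C}_{A^I},\eta^{C}_{B^I})$ is a saddle point of the common-constraint game, i.e. $(v^I,A^I,B^I)$ solves the common-constraint game.

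There is essentially no obstacle here: the only content is recognising that the order condition makes $v\wedge l\le u$ automatic (and, in the second direction, that it also kills the pre-emption set $\{v^I>l>u\}$ that was the source of the asymmetry between Theorems~\ref{thm:main} and \ref{thm:main1}). The substantive work has already been done in the proofs of those two theorems, so the proof of this corollary amounts to two one-line reductions.
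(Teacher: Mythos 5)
Your proposal is correct and is exactly the argument the paper intends: the corollary is stated without an explicit proof precisely because, as you observe, the order condition gives $v\wedge l\le l\le u$ in both directions, so Theorems~\ref{thm:main} and \ref{thm:main1} apply immediately. Your additional remarks (that $l\le u$ also forces $A^C\cap B^C=\emptyset$ and empties the pre-emption set $\{v^I>l>u\}$) are accurate and consistent with the surrounding discussion in the paper.
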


\subsection{Necessity of the sufficient condition}\label{ssec:necessity}

In the previous two subsections we introduced sufficient conditions under which the solution to the Dynkin game under one constraint is also the solution under the other constraint. Under the following extra assumption the sufficient conditions are also necessary:
\begin{assumption}\label{assumption:gamevalue}
    (i) The value $v^C(x)$ of the game under the common Poisson constraint exists for every $x\in \sX$ and satisfies:
    \[v^C(x)=\E^x[e^{-rT_1^C}\max\{l(X_{T_{1}^C}),\min(v^C(X_{T_{1}^C}),u(X_{T_{1}^C}))\}].\]

    (ii) The value $v^I(x)$ of the game under the independent Poisson constraint exists for every $x\in \sX$ and satisfies
    \[v^I(x)=\E^x[e^{-r(T_1^M\wedge T_1^m)}(\max\{l(X_{T_{1}^M}),v^{\bDGH{I}}(X_{T_{1}^M})\}\1_{T_1^M<T_1^m}+\min\{u(X_{T_1^m}),v^I(X_{T_1^m})\}\1_{T_1^m<T_1^M})].\]
\end{assumption}

We will show in Section~\ref{sec:BSDE} (see Theorems~\ref{thm:satisfyassC} and \ref{thm:satisfyassi}) that Assumption~\ref{assumption:gamevalue} is satisfied in a wide class of examples.

\begin{proposition}
    Suppose that $M=L$, $l$ is lower semi-continuous and $u$ is upper semi-continuous. Suppose that Assumption \ref{assumption:gamevalue} holds.

    Suppose that $v^I=v^C$ and that $v$ is continuous where $v:= v^I \equiv v^C$. Then $v\wedge l\leq u$.
\end{proposition}

\begin{proof}
     To seek a contradiction, suppose that $v^I=v^C$ and $\{x: u(x)<v(x)\wedge l(x)\}$ is nonempty. By the argument in Lemma \ref{lem:open}, the set $\{x: u(x)<v(x)\wedge l(x)\}$ is open, hence it must have a positive measure.

    Define the following subsets of the state space $\sX$:
    \begin{eqnarray*}
        E_1=\{x:v(x)<l(x)\leq u(x)\}; & \hspace{15mm} &E_2=\{x:l(x)\leq v(x)\leq u(x)\};\\
        E_3=\{x:l(x)\leq u(x)<v(x)\}; && E_4=\{x:v(x)\leq u(x)<l(x)\};\\
        E_5=\{x:u(x)<v(x)<l(x)\}; &&E_6=\{x:u(x)<l(x)\leq v(x)\}.
    \end{eqnarray*}
    The sets $E_i$, $i=1,...,6$ form a partition of $\sX$, with $\{x: u(x)<v(x)\wedge l(x)\}=E_5\cup E_6$. 

    Suppose that the event times $\{T_j^M\}_{j \geq 1},\{T_k^m\}_{k \geq 1}$ are generated by two independent Poisson processes, each of rate $\lambda$. Following a similar coupling argument as that in the proof of Lemma \ref{lem:comp1}, we can thin out all the event times or signals from the \textsc{sup} player's Poisson process when the process $X$ is in $E_3\cup E_6$ and thin out all the signals from the \textsc{inf} player's Poisson process when the process $X$ is in $E_1\cup E_2\cup E_4 \cup E_5$. \bDGH{Combining the remaining event times yields} a Poisson process \bDGH{of rate $\lambda$} and we denote the event times by $\{Q_n\}_{n\geq 1}$. Alternatively we can generate another Poisson process with intensity $\lambda$ by thinning out all the signals from the \textsc{inf} player's Poisson process when the process $X$ is in $E_3\cup E_6$ and all the signals from the \textsc{sup} player's Poisson process when the process $X$ is in $E_1\cup E_2\cup E_4 \cup E_5$. We denote the event times of this new Poisson process by $\{S_n\}_{n\geq 1}$. Note that $\{Q_m\}_{m\geq 1} \cup \{S_n\}_{n\geq 1}= \{T_j^M\}_{j \geq 1} \cup \{T_k^m\}_{k \geq 1}$.

    By Assumption \ref{assumption:gamevalue}(ii) and the definition of $E_i$, 
{\footnotesize{
    \begin{eqnarray*}
    v^I(x)
        &=& \E^x\bigg[e^{-r(T_1^M\wedge T_1^m)}\bigg\{\1_{X_{T_1^M\wedge T_1^m}\in E_1}(l(X_{T_{1}^M})\1_{T_1^M<T_1^m}+v^I(X_{T_1^m})\1_{T_1^m<T_1^M})\\
        &&\hspace{25mm}+\1_{X_{T_1^M\wedge T_1^m}\in E_2}(v^I(X_{T_{1}^M})\1_{T_1^M<T_1^m}+v^I(X_{T_1^m})\1_{T_1^m<T_1^M})\\
        &&\hspace{25mm}+\1_{X_{T_1^M\wedge T_1^m}\in E_3}(v^I(X_{T_{1}^M})\1_{T_1^M<T_1^m}+u(X_{T_1^m})\1_{T_1^m<T_1^M})\\
        &&\hspace{25mm}+\1_{X_{T_1^M\wedge T_1^m}\in E_4}(l(X_{T_{1}^M})\1_{T_1^M<T_1^m}+v^I(X_{T_1^m})\1_{T_1^m<T_1^M})\\
        &&\hspace{25mm}+\1_{X_{T_1^M\wedge T_1^m}\in E_5}(l(X_{T_{1}^M})\1_{T_1^M<T_1^m}+u(X_{T_1^m})\1_{T_1^m<T_1^M}) \\
        &&\hspace{25mm}+\1_{X_{T_1^M\wedge T_1^m}\in E_6}(v^I(X_{T_{1}^M})\1_{T_1^M<T_1^m}+u(X_{T_1^m})\1_{T_1^m<T_1^M})\bigg\}\bigg] \\
       &< & \E^x\bigg[e^{-r(T_1^M\wedge T_1^m)}\bigg\{\1_{X_{T_1^M\wedge T_1^m}\in E_1}(l(X_{T_{1}^M})\1_{T_1^M<T_1^m}+v^I(X_{T_1^m})\1_{T_1^m<T_1^M})\\
        &&\hspace{25mm}+\1_{X_{T_1^M\wedge T_1^m}\in E_2}(v^I(X_{T_{1}^M})\1_{T_1^M<T_1^m}+v^I(X_{T_1^m})\1_{T_1^m<T_1^M})\\
        &&\hspace{25mm}+\1_{X_{T_1^M\wedge T_1^m}\in E_3}(v^I(X_{T_{1}^M})\1_{T_1^M<T_1^m}+u(X_{T_1^m})\1_{T_1^m<T_1^M})\\
        &&\hspace{25mm}+\1_{X_{T_1^M\wedge T_1^m}\in E_4}(l(X_{T_{1}^M})\1_{T_1^M<T_1^m}+v^I(X_{T_1^m})\1_{T_1^m<T_1^M})\\
        &&\hspace{25mm}+\1_{X_{T_1^M\wedge T_1^m}\in E_5}(l(X_{T_{1}^M})\1_{T_1^M<T_1^m}+v^I(X_{T_1^m})\1_{T_1^m<T_1^M})\\
        &&\hspace{25mm}+\1_{X_{T_1^M\wedge T_1^m}\in E_6}(v^I(X_{T_{1}^M})\1_{T_1^M<T_1^m}+l(X_{T_1^m})\1_{T_1^m<T_1^M})\bigg\}\bigg] 
\end{eqnarray*} }}
where the strict inequality follows from the fact that $E_5 \cup E_6$ has positive measure.
Rewriting the last expression in terms of $Q_1$ and $S_1$, and using $v^I \equiv v^C$, the strong Markov property for $v^C(X_{S_1})$ and Assumption~\ref{assumption:gamevalue}(i) we have
{\footnotesize{
\begin{eqnarray*}
v^I(x)        &<& \E^x\bigg[e^{-r(Q_1\wedge S_1)}\bigg\{\1_{X_{Q_1\wedge S_1}\in E_1}(l(X_{Q_1})\1_{Q_1<S_1}+v^I(X_{S_1})\1_{S_1<Q_1})\\
        &&\hspace{25mm}+\1_{X_{Q_1\wedge S_1}\in E_2}(v^I(X_{Q_1})\1_{Q_1<S_1}+v^I(X_{S_1})\1_{S_1<Q_1})\\
        &&\hspace{25mm}+\1_{X_{Q_1\wedge S_1}\in E_3}(v^I(X_{S_1})\1_{S_1<Q_1}+u(X_{Q_1})\1_{Q_1<S_1})\\
        &&\hspace{25mm}+\1_{X_{Q_1\wedge S_1}\in E_4}(l(X_{Q_1})\1_{Q_1<S_1}+v^I(X_{S_1})\1_{S_1<Q_1})\\
        &&\hspace{25mm}+\1_{X_{Q_1\wedge S_1}\in E_5}(l(X_{Q_1})\1_{Q_1<S_1}+v^I(X_{S_1})\1_{S_1<Q_1})\\
        &&\hspace{25mm}+\1_{X_{Q_1\wedge S_1}\in E_6}(v^I(X_{S_1})\1_{S_1<Q_1}+l(X_{Q_1})\1_{Q_1<S_1})\bigg\}\bigg]\\
        &=&\E^x[e^{-r S_1}v^C(X_{S_1})\1_{S_1<Q_1}+e^{-rQ_1}\max\{l(X_{Q_1}),\min(v^C(X_{Q_1}),u(X_{Q_1}))\}\1_{Q_1<S_1}]\\
        &=& \E^x[e^{-r Q_1}\max\{l(X_{Q_1}),\min(v^C(X_{Q_1}),u(X_{Q_1}))\}\1_{S_1<Q_1}\\&&\hspace{40mm}+e^{-rQ_1}\max\{l(X_{Q_1}),\min(v^C(X_{Q_1}),u(X_{Q_1}))\}\1_{Q_1<S_1}]\\
        &=&v^C(x).
    \end{eqnarray*}
}} But then we have $v^I(x) < V = v^C(x)$ which contradicts with our assumption that $v^C=v^I$.
\end{proof}

\section{An example and a counterexample}
\label{sec:examples}

\subsection{From the common constraint to the independent constraint: an example}

In this subsection, we construct an example where the players' optimal stopping regions are disjoint under the common constraint. As a consequence, Theorem~\ref{thm:necessity} tells us the solution (value function and optimal stopping regions of the game) of the game under the common constraint is also a solution under the independent constraint.

Let $X$ be a Brownian motion. Consider the game with payoff functions: \bDGH{$l(x)=\1_{x\in(-1,1)}$} and $u(x)=\frac{\lambda}{\lambda+r}\1_{x\in[-1,1]}+\frac{1}{1+\epsilon}\1_{x\notin [-1,1]}$, where $\epsilon>0$ is sufficiently large (see \eqref{eq:epsbound} below). \bDGH{Note that we have chosen $l$ and $u$ to be lower-continuous and upper-continuous respectively.}
We also take $M_\infty=0$. (Note that $\lim\limits_{t \uparrow \infty} e^{-rt} l(X_t) = 0 = \lim\limits_{t \uparrow \infty} e^{-rt} u(X_t)$ so this is the natural candidate for the payoff if neither player stops.)

These functions define a Dynkin game with payoff:
\begin{equation} \label{eq:example}
    R(\tau,\sigma)=e^{-r\tau}\bDGH{\1_{X_\tau\in(-1,1)}}\1_{\tau\leq \sigma} \1_{\tau < \infty} +e^{-r\sigma}\left(\frac{\lambda}{\lambda+r}\1_{X_\sigma\in[-1,1]}+\frac{1}{1+\epsilon}\1_{X_\sigma\notin [-1,1]}\right)\1_{\sigma<\tau}.
\end{equation}
We solve the game with expected payoff $J^x(\tau,\sigma) = \E^x[R(\tau,\sigma)]$ under the common constraint. To simplify the expressions, we define $\theta=\sqrt{2r}$ and $\phi=\sqrt{2(\lambda+r)}$ and use these notations instead of $\lambda$ and $r$ when we present the value of the games. Also, note that the functions $l$, $u$ and the value are all even functions and our derivation will focus on the positive real line with the values on the negative real line following by symmetry. 

We assume that
\begin{equation}\label{eq:epsbound}
    \epsilon>\frac{\theta^2\sinh(\phi)+\theta\phi\cosh(\phi)}{(\phi^2-\theta^2)\sinh(\phi)}.
\end{equation}
Standard arguments imply that the value of this game should satisfy the HJB equation
\begin{equation}\label{HJB1}
    \mathcal{L}V-\frac{\phi^2}{2} V +\frac{\phi^2-\theta^2}{2}\max\{\min(V,u),l\}=0,
\end{equation}
where $\mathcal{L}f=\frac{1}{2}f''$ is the infinitesimal generator of a Brownian motion. Therefore our strategy is to construct a solution to \eqref{HJB1} and then to verify that this solution is the value of the game.

We expect that the \textsc{sup} player seeks to stop when $|X|$ is small whereas the \textsc{inf} player seeks to stop when $|X|$ is moderate but not too large (when $|X|$ is large the \textsc{inf} player hopes that discounting will reduce the value of any payoff to the \textsc{sup} player over time, and does not wish to stop). Since $l$ has a jump at $\pm 1$ we expect that the threshold between small and moderate is at $\pm 1$. We expect the threshold between moderate and large to occur at some point $x^*$ which is characterised by $V(x^*)= \frac{1}{1+\epsilon}$.
We further expect that the value function of the game will be decreasing on the positive real line.

Define the function $f:(0,\infty) \rightarrow (0,\infty)$ by $f(x)=\theta( \theta \sinh(\phi x)+ \phi \cosh(\phi x))$. Recall the lower bound \eqref{eq:epsbound} on $\epsilon$.

\begin{lemma}\label{lem:xstar}
    The equation $f(x)=(\phi^2-\theta^2)\epsilon\sinh(\phi)$ has a unique solution, which we label $\hat{x}$. Furthermore, $\hat{x} \in (1,\infty)$.
\end{lemma}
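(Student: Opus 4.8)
The plan is to analyze the function $f(x)=\theta(\theta\sinh(\phi x)+\phi\cosh(\phi x))$ on $(0,\infty)$ and show that the equation $f(x)=(\phi^2-\theta^2)\epsilon\sinh(\phi)$ (call the right-hand side $K$, noting $K>0$ since $\phi^2>\theta^2$) has exactly one root, located in $(1,\infty)$. The key observation is that $f$ is strictly increasing on $(0,\infty)$: indeed $f'(x)=\theta\phi(\theta\cosh(\phi x)+\phi\sinh(\phi x))>0$ for $x>0$ since all of $\theta,\phi,\cosh(\phi x),\sinh(\phi x)$ are positive there. Together with continuity and the fact that $f(0^+)=\theta\phi>0$ is finite while $f(x)\to\infty$ as $x\to\infty$, strict monotonicity immediately gives existence and uniqueness of a root $\hat x\in(0,\infty)$ as soon as we check $K>f(0^+)$, which we will get for free from the next step.

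The substantive part is showing $\hat x>1$, equivalently (by monotonicity of $f$) that $f(1)<K$, i.e.
\[
\theta(\theta\sinh(\phi)+\phi\cosh(\phi)) < (\phi^2-\theta^2)\epsilon\sinh(\phi).
\]
Dividing through by $\sinh(\phi)>0$, this is precisely the assumed lower bound \eqref{eq:epsbound} on $\epsilon$:
\[
\epsilon>\frac{\theta^2\sinh(\phi)+\theta\phi\cosh(\phi)}{(\phi^2-\theta^2)\sinh(\phi)}.
\]
So $f(1)<K$, and since $f$ is strictly increasing with $f(x)\to\infty$, the unique root $\hat x$ satisfies $\hat x>1$. (This also confirms $K>f(1)>f(0^+)$, so the root does lie in the open interval $(0,\infty)$ and not at the boundary.)

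There is no real obstacle here: the whole statement is essentially a restatement of the hypothesis \eqref{eq:epsbound} combined with elementary monotonicity of $f$. The only point requiring a line of care is recording that $\phi^2-\theta^2=2(\lambda+r)-2r=2\lambda>0$, so that $K>0$ and the inequality in \eqref{eq:epsbound} is not vacuous; and that $f$ is continuous and unbounded above so that the intermediate value theorem applies. I would present it as a short two-paragraph argument: first monotonicity and the limits giving existence/uniqueness, then the comparison $f(1)<K$ via \eqref{eq:epsbound} giving the location $\hat x\in(1,\infty)$.
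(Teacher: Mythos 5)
Your proof is correct and follows essentially the same route as the paper's: establish strict monotonicity of $f$ via $f'>0$ and $f(x)\to\infty$, then observe that the hypothesis \eqref{eq:epsbound} is exactly the statement $f(1)<(\phi^2-\theta^2)\epsilon\sinh(\phi)$, which places the unique root in $(1,\infty)$. The only (minor) additions you make are the explicit remarks that $\phi^2-\theta^2=2\lambda>0$ and that the intermediate value theorem applies, which the paper leaves implicit.
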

\begin{proof}
Observe that $f'(x)=\theta \phi (\theta \cosh(\phi x)+\phi \sinh(\phi x))>0$ on $[0,\infty)$, and that $\lim\limits_{x \uparrow \infty} f(x) = \infty$. Therefore $f$ is increasing on $(0,\infty)$ and any solution of $f(x) = f_0$ is unique. The assumption that $\epsilon>\frac{\theta^2\sinh(\phi)+\theta\phi\cosh(\phi)}{(\phi^2-\theta^2)\sinh(\phi)}$ implies that $f(1)<(\phi^2-\theta^2)\epsilon\sinh(\phi)$. It follows that there exists a unique positive solution $\hat{x}$ to $f(\hat{x})=(\phi^2-\theta^2)\epsilon\sinh(\phi)$ and $\hat{x}>1$.
\end{proof}

Define the candidate value function
\begin{equation}\label{eq:value}
V(x) = \left\{ \begin{array}{lll}
De^{\theta x} & \; & x<-x^*;\\
Be^{-\phi x}+Ce^{\phi x}+\frac{\phi^2-\theta^2}{\phi^2}\frac{1}{1+\epsilon} & \; & x\in[-x^*,-1);\\
   A\cosh(\phi x)+\frac{\phi^2-\theta^2}{\phi^2} & \; & x\in[-1,1];\\
Be^{\phi x}+Ce^{-\phi x}+\frac{\phi^2-\theta^2}{\phi^2}\frac{1}{1+\epsilon} & \; & x\in(1,x^*];\\
De^{-\theta x} & \; & x>x^*.
\end{array} \right.
\end{equation}
where $A$, $B$, $C$, $D$ and $x^*$ are to be determined.

 {Recall that a strong solution of a HJB equation is
a twice (weakly) differentiable function satisfying the equation almost everywhere (See Gilbarg and Trudinger \cite{Gilbarg2001} Chapter 9).}

\begin{lemma}\label{lem:propertyV} Set $x^*= \hat{x}$ where $\hat{x}$ is as defined in Lemma \ref{lem:xstar} and set
\begin{eqnarray*}
    A&=&\frac{\theta^2-\theta \phi}{\phi^2}\frac{1}{1+\epsilon} e^{-\phi x^*}-\frac{\phi^2-\theta^2}{\phi^2}\frac{\epsilon}{1+\epsilon}e^{-\phi}<0;\\
    B&=&\frac{\theta^2-\theta \phi}{2\phi^2}\frac{1}{1+\epsilon} e^{-\phi x^*}<0;   \\
    C&=&\frac{\theta^2+\theta\phi}{2\phi^2}\frac{1}{1+\epsilon} e^{\phi x^*}>0;\\
    D&=&\frac{1}{1+\epsilon} e^{\theta x^*}>0.
\end{eqnarray*}

    Then the function $V$ is nonnegative, even, decreasing on $[0,\infty)$, and is in $C^1$. The first derivative $V'$ is bounded, and the second derivative $V''$ exists and is continuous except at $x=\pm 1$. The second left and right derivatives of $V$ exist at $x=\pm 1$ and are finite.

    Moreover, $V$ satisfies: $V(x^*)=\frac{1}{1+\epsilon}$, $l>u>V$ on the set \bDGH{$(-1,1)$}, \bDGH{$V > u>l$} on the set \bDGH{$(-x^*,-1)\cup(1,x^*)$} and $u>V>l$ on the set $(-\infty,-x^*)\cup(x^*,\infty)$.

    As a result, $V$ is a strong solution of the HJB equation and satisfies (\ref{HJB1}) on $\mathbb{R}\setminus\{-1,1\}$.
\end{lemma}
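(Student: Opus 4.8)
The statement is a verification of the explicit candidate \eqref{eq:value}, so the plan is to check each asserted property in turn, working only on $[0,\infty)$: the function in \eqref{eq:value} is manifestly invariant under $x\mapsto-x$, so evenness is immediate and everything else transfers by reflection. Recall $x^*=\hat x>1$ by Lemma~\ref{lem:xstar}, so the intervals $[0,1]$, $(1,x^*]$, $(x^*,\infty)$ are genuinely nested. I first record the signs of the constants: since $\phi^2-\theta^2=2\lambda>0$ we have $0<\theta<\phi$, hence $\theta^2-\theta\phi<0$ gives $A<0$ and $B<0$, $\theta^2+\theta\phi>0$ gives $C>0$, and plainly $D>0$. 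I also record the shape of the data: $l\equiv1$ and $u\equiv\frac{\lambda}{\lambda+r}=\frac{\phi^2-\theta^2}{\phi^2}<1$ on $[-1,1]$, while $l\equiv0$ and $u\equiv\frac{1}{1+\epsilon}$ on $\{|x|>1\}$.

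The first genuine step is the $C^1$-matching, which is where the hypotheses on $x^*$ and $\epsilon$ enter. At $x^*$, the free-boundary condition $V(x^*)=De^{-\theta x^*}=\frac{1}{1+\epsilon}$ fixes $D$, and then continuity of $V$ and of $V'$ at $x^*$ are two linear equations whose unique solution is the stated pair $(B,C)$; equivalently, one just substitutes the stated $(B,C,D)$ and checks the three identities. At $x=1$, continuity of $V$ and of $V'$ are two equations; their sum is exactly the stated formula for $A$, and their difference, after clearing $\frac{1}{\phi^2(1+\epsilon)}$, collapses to $\theta^2\sinh(\phi x^*)+\theta\phi\cosh(\phi x^*)=(\phi^2-\theta^2)\epsilon\sinh(\phi)$, that is, to $f(x^*)=(\phi^2-\theta^2)\epsilon\sinh(\phi)$, which holds because $x^*=\hat x$ by Lemma~\ref{lem:xstar}. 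This identification of the first-order smooth fit at $x=1$ with the scalar equation defining $\hat x$ is the crux of the lemma; the rest is bookkeeping. The higher-order regularity then follows: each of the three pieces is real-analytic up to its endpoints, so $V'$ is bounded on the compact $[0,x^*]$ and equals $-\theta De^{-\theta x}\in(-\theta/(1+\epsilon),0)$ on $(x^*,\infty)$; $V''$ is given on each open piece by the associated linear ODE and, using the free-boundary condition, equals $\theta^2 V(x^*)=\theta^2/(1+\epsilon)$ on both sides of $x^*$, so $V''$ is continuous there, while at $x=\pm1$ it jumps because the forcing term jumps from $\frac{\phi^2-\theta^2}{2}$ to $\frac{\phi^2-\theta^2}{2(1+\epsilon)}$; the one-sided second derivatives at $\pm1$ are the obvious finite limits.

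Next I establish monotonicity and nonnegativity. On $[0,1]$, $V'(x)=\phi A\sinh(\phi x)\le0$ since $A<0$; on $(1,x^*)$, $V'(x)=\phi(Be^{\phi x}-Ce^{-\phi x})<0$ since $B<0<C$; on $(x^*,\infty)$, $V'(x)=-\theta De^{-\theta x}<0$. As $V\in C^1$, it is (strictly) decreasing on $(0,\infty)$, and since $V(x)=De^{-\theta x}\to0$ as $x\to\infty$ this forces $V>0$ throughout $[0,\infty)$; with evenness this yields all the claimed shape properties. The ordering statements are then read off from monotonicity, $V(x^*)=\frac{1}{1+\epsilon}$, and $A<0$: on $[-1,1]$, $V(x)=A\cosh(\phi x)+\frac{\phi^2-\theta^2}{\phi^2}<\frac{\phi^2-\theta^2}{\phi^2}=u<1=l$; on $(1,x^*]$, $V(x)\ge V(x^*)=\frac{1}{1+\epsilon}=u>0=l$; and on $(x^*,\infty)$, $0=l<V(x)<V(x^*)=\frac{1}{1+\epsilon}=u$.

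Finally, the HJB claim follows by inspection once the ordering is known: on $[-1,1]$ one has $V<u<l$, so $\max\{\min(V,u),l\}=l\equiv1$ and \eqref{HJB1} is the linear ODE $\tfrac12V''-\tfrac{\phi^2}{2}V+\tfrac{\phi^2-\theta^2}{2}=0$, solved by $A\cosh(\phi x)+\frac{\phi^2-\theta^2}{\phi^2}$; on $(1,x^*]$ one has $l<u\le V$, so the bracket is $u\equiv\frac{1}{1+\epsilon}$ and \eqref{HJB1} is solved by $Be^{\phi x}+Ce^{-\phi x}+\frac{\phi^2-\theta^2}{\phi^2}\frac{1}{1+\epsilon}$; on $(x^*,\infty)$ one has $l<V<u$, so the bracket is $V$ and \eqref{HJB1} becomes $\tfrac12V''-\tfrac{\theta^2}{2}V=0$, solved by $De^{-\theta x}$; the negative half-line is handled by evenness. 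Since $V\in C^1(\mathbb R)$, its second derivative exists, is bounded, and is continuous off the null set $\{-1,1\}$, and \eqref{HJB1} holds at every point of $\mathbb R\setminus\{-1,1\}$, so $V$ is a strong solution in the sense recalled just before the statement. The only non-mechanical point throughout is the reduction of the smooth fit at $x=\pm1$ to the equation characterising $\hat x$; everything else is careful but elementary algebra and case-checking.
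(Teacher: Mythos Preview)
Your proof is correct and follows exactly the approach sketched in the paper: reduce to $[0,\infty)$ by evenness, impose the five conditions ($C^1$ at $1$ and at $x^*$ together with $V(x^*)=\tfrac{1}{1+\epsilon}$) to determine the five unknowns, and then verify the remaining regularity, monotonicity, ordering, and HJB properties by direct computation. The paper's own proof simply asserts that ``the values given in the lemma can be derived after some algebra'' and omits all the details you have supplied, including the key reduction of the smooth-fit condition at $x=1$ to the scalar equation $f(x^*)=(\phi^2-\theta^2)\epsilon\sinh\phi$ defining $\hat x$.
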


\begin{proof}
    Given that the functions $l,u$ and $V$ are even functions, it suffices to prove the results on $[0,\infty)$.

    We require that $V$ is $C^1$ at $x = 1$ and $x=x^*$ and that $V(x^*)=\frac{1}{1 + \epsilon}$. These five conditions are sufficient to fix the five unknowns $A$, $B$, $C$, $D$ and $x^*$, and the values given in the lemma can be derived after some algebra.
\end{proof}

\begin{remark}
\label{rem:ItoTanaka}
    The candidate value function $V$ is not $C^2$ by Lemma \ref{lem:propertyV}. However, given that $V$ is $C^1$ and $V''$ fails to exist at a finite number of points, It\^{o}'s formula is still valid for $V(X_t)$, in the sense that $V(X_t)$ can still be written in the form $V(X_t) = V(x) + \int_0^t V'(X_s) dX_s + \int_0^t \frac{1}{2} V''(X_s) d[X]_s$, see \cite[Problem 3.6.24]{karatzas2014brownian}.
\end{remark}

\begin{lemma}
\label{lem:commonverification}
    The function $V$ is the value of the game under the common Poisson constraint, and $(\eta^C_{\bDGH{(-1,1)}}, \eta^C_{\bDGH{(}-x^*,-1)\cup(1,x^*\bDGH{)}})$ is a saddle point. That is, $v^C=V$, \bDGH{$A^C=(-1,1)$ and $B^C=(-x^*,-1)\cup(1,x^*)$}. \bDGH{Note that $A^C$ and $B^C$ are open.}
\end{lemma}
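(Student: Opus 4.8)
\textbf{Proof proposal.} The plan is to verify \emph{directly} that $V$ is the game value and that $(\eta^C_{[-1,1]},\,\eta^C_{[-x^*,-1)\cup(1,x^*]})$ is a saddle point, by reducing the continuous-time Poisson-constrained game to the discrete game played at the event times $(T^C_n)_{n\ge1}$. Write $A^*=[-1,1]$, $B^*=[-x^*,-1)\cup(1,x^*]$ and $G(y):=\max\{l(y),\min(u(y),V(y))\}$. A one-line computation shows that $G(y)$ is precisely the value of the $2\times2$ matrix game faced at a Poisson event with state $y$ (rows $=$ \textsc{sup} stop/continue, columns $=$ \textsc{inf} stop/continue, entries $l(y),l(y),u(y),V(y)$), under the convention that a tie is resolved in favour of the \textsc{sup} player; an optimal \textsc{sup} action is ``stop'' whenever $l(y)\ge\min(u(y),V(y))$, and an optimal \textsc{inf} action is ``stop'' whenever $\min(u(y),V(y))\le l(y)\vee V(y)$ is attained by $u(y)$, which by Lemma~\ref{lem:propertyV} means exactly $y\in A^*$ and $y\in B^*$ respectively.

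First I would recast the HJB equation as a one-step dynamic programming identity. Using $r=\theta^2/2$, $\lambda+r=\phi^2/2$ and $\lambda=(\phi^2-\theta^2)/2$, equation \eqref{HJB1} reads $\mathcal{L}V-(\lambda+r)V+\lambda G(V)=0$ in the strong sense (Lemma~\ref{lem:propertyV}), where $G(V)(x):=G(x)$. Since $V$ is bounded (nonnegative, even, decreasing on $[0,\infty)$) and $G(V)$ is bounded, and since the only bounded solutions on $\R$ of the homogeneous equation $\tfrac12 h''=(\lambda+r)h$ are $h\equiv0$, we get $V=\lambda R_{\lambda+r}G(V)$ where $R_{\lambda+r}$ is the resolvent of $X$. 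Equivalently,
\[
V(x)=\E^x\big[e^{-rT}G(V)(X_T)\big],\qquad T\sim\mathrm{Exp}(\lambda)\text{ independent of }X,
\]
which is the one-step identity we need; the failure of $V$ to be $C^2$ at $\pm1$ is immaterial here by Remark~\ref{rem:ItoTanaka}. From the orderings in Lemma~\ref{lem:propertyV} we then read off $G(V)=l$ on $A^*$, $G(V)=u$ on $B^*$, and $G(V)=V$ on $\R\setminus[-x^*,x^*]$; in particular $l\le\min(u,V)$ on $A^{*c}$, so $G(V)=\min(u,V)$ there.

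Now fix $x\in\sX$ and set $\tau^*=\eta^C_{A^*}$, $\sigma^*=\eta^C_{B^*}$. For an arbitrary $\tau\in\mathcal{R}^C(\lambda)$ put $\rho=\tau\wedge\sigma^*$ and define the discrete process $Y_n$ to be the discounted realised payoff of $R^x(\tau,\sigma^*)$ if $\rho\le T^C_n$, and $e^{-rT^C_n}V(X_{T^C_n})$ otherwise. On $\{\rho>T^C_n\}$ a short case analysis at $T^C_{n+1}$ (according to whether $\tau=T^C_{n+1}$, whether $X_{T^C_{n+1}}\in B^*$, and with ties handled by the convention) gives $Y_{n+1}\le e^{-rT^C_{n+1}}G(V)(X_{T^C_{n+1}})$; combining this with the strong Markov property of $X$, the independence and memorylessness of the Poisson increments, and the one-step identity yields $\E^x[Y_{n+1}\mid\mathcal{F}_{T^C_n}]\le Y_n$, i.e. $(Y_n)$ is a bounded supermartingale. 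Since $T^C_n\to\infty$ and $V$ is bounded, $Y_n\to R^x(\tau,\sigma^*)$ a.s. (with $Y_n\to0=M_\infty$ on $\{\rho=\infty\}$), so bounded convergence gives $J^x(\tau,\sigma^*)\le Y_0=V(x)$. Symmetrically, using $G(V)=l$ on $A^*$ and $G(V)=\min(u,V)$ on $A^{*c}$, the analogous process is a submartingale when \textsc{sup} plays $\tau^*$ against an arbitrary $\sigma$, giving $J^x(\tau^*,\sigma)\ge V(x)$; taking $\tau=\tau^*$, $\sigma=\sigma^*$ (where every inequality above is an equality, as $G(V)=V$ off $A^*\cup B^*$) gives $J^x(\tau^*,\sigma^*)=V(x)$. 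Hence $\ul v^C=\ol v^C=V$ with saddle point $(\tau^*,\sigma^*)$. Finally, checking the sign of $V-l$ and $V-u$ on the three regions shows $\{v^C<l\}\cup\{v^C>l>u\}=[-1,1]=A^*$ and $\{v^C\ge u\}=[-x^*,-1)\cup(1,x^*]=B^*$, so the saddle point has the form asserted (matching Assumption~\ref{assumption:existence}).

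The main obstacle is that $\mathcal{L}V-rV=\lambda(V-G(V))$ changes sign, so a naive continuous-time (super/sub)martingale argument fails: one genuinely has to pass to the Poisson skeleton $(Y_n)$ and rely on the one-step identity, while being careful (i) about the tie convention — which is exactly why the \textsc{sup} stopping set in Assumption~\ref{assumption:existence} carries the extra piece $\{v^C>l>u\}$, even though that piece is empty here — and (ii) about the infinite-horizon convergence $Y_n\to R^x$ together with the uniform boundedness of $(Y_n)$, both of which lean on the boundedness and $C^1$/Itô--Tanaka regularity of $V$ established in Lemma~\ref{lem:propertyV}.
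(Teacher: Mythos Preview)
Your proof is correct but follows a different route from the paper. The paper's proof is very short: it checks that $l,u$ are bounded so Assumption~\ref{assumption:BSDE} holds, observes that by Remark~\ref{rem:ItoTanaka} and Lemma~\ref{lem:propertyV} the pair $(V(X),V'(X))$ solves the infinite-horizon BSDE~\eqref{eq:BSDE} with the asymptotic condition~\eqref{boundary_cond}, and then simply invokes Theorem~\ref{thm:satisfyassC}. In other words, the paper treats this example as an instance of the general BSDE machinery built in Section~\ref{sec:BSDE}, and the verification is outsourced to Lemma~\ref{lem:auxiliaryvalue} and Theorem~\ref{thm:common}.

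You instead give a direct, self-contained verification: you derive the one-step identity $V(x)=\E^x[e^{-rT^C_1}G(V)(X_{T^C_1})]$ from the HJB equation via the resolvent (which is exactly Lemma~\ref{lem:DPE0} specialised to this $V$), and then run the discrete super/submartingale argument along the Poisson skeleton by hand. Your case analysis and the final identification of $A^C,B^C$ from the orderings in Lemma~\ref{lem:propertyV} are correct. The trade-off is clear: your argument is more elementary and does not require the infinite-horizon BSDE existence theory (Proposition~\ref{prop:existence}), but it effectively reproves a special case of Lemma~\ref{lem:auxiliaryvalue} and Theorem~\ref{thm:common}; the paper's route is shorter here precisely because it wants to exhibit the example as a consequence of the general framework, which is the purpose of Section~\ref{sec:BSDE}.
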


The proof of Lemma~\ref{lem:commonverification} is fairly standard (although it is for stopping under a Poisson constraint so not entirely so) and as our proof relies on a result from the next section it is given in the Appendix. The next result follows immediately from~Theorem~\ref{thm:necessity}.

\begin{corollary}
$V$ is also the value of the game under the independent Poisson constraint, and \bDGH{$(\eta^{M}_{(-1,1)}, \eta^{m}_{(-x^*,-1)\cup(1,x^*)})$} is a saddle point.
\end{corollary}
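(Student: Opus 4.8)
The plan is to apply Corollary~\ref{cor:main} directly, so the task reduces to verifying that the three hypotheses of that corollary hold for the game of this subsection: that Assumption~\ref{assumption:existence} holds, that Assumption~\ref{assumption:technical} holds, and that the optimal stopping sets for the common-constraint game are disjoint.

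First I would dispatch Assumption~\ref{assumption:existence}. Lemma~\ref{lem:commonverification} already provides a value function $v^C = V$ with saddle point $(\eta^C_{[-1,1]}, \eta^C_{[-x^*,-1)\cup(1,x^*]})$, so what remains is to confirm that the sets $[-1,1]$ and $[-x^*,-1)\cup(1,x^*]$ are exactly the canonical sets $A^C=\{x:v^C(x)< l(x) \}\cup\{x:v^C(x)>l(x)> u(x)\}$ and $B^C=\{x:v^C(x)\geq u(x)\}$. This is read off from the ordering statements collected in Lemma~\ref{lem:propertyV}: on $[-1,1]$ one has $l>u>V$, so $v^C<l$ there and $\{v^C>l>u\}$ is empty there; on $[-x^*,-1)\cup(1,x^*]$ one has $V\geq u>l$ together with $V(\pm x^*)=\tfrac{1}{1+\epsilon}=u(\pm x^*)$; and on $(-\infty,-x^*)\cup(x^*,\infty)$ one has $u>V>l$, while on $\mathbb{R}\setminus[-1,1]$ one has $l\equiv 0\leq V$ (recall $V$ is nonnegative). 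Together these pin down $A^C=[-1,1]$ and $B^C=[-x^*,-1)\cup(1,x^*]$ precisely. Next, Assumption~\ref{assumption:technical} is immediate: in this example $M_\infty=0$, which is the second case of Lemma~\ref{lem:sufftech}. Finally, disjointness is clear, since $x^*=\hat x>1$ by Lemma~\ref{lem:xstar}, and hence $A^C\cap B^C=[-1,1]\cap\big([-x^*,-1)\cup(1,x^*]\big)=\emptyset$.

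With all three hypotheses confirmed, Corollary~\ref{cor:main} applies and gives $v^I=v^C=V$ on $\sX=\mathbb{R}$ and that $(\eta^{MAX}_{A^C},\eta^{min}_{B^C})=(\eta^{MAX}_{[-1,1]},\eta^{min}_{[-x^*,-1)\cup(1,x^*]})$ is a saddle point of the independent-constraint game, which is the assertion. There is no real obstacle here: the only bookkeeping step requiring care is matching the explicit stopping sets from Lemma~\ref{lem:commonverification} to the canonical forms in Assumption~\ref{assumption:existence} via the sign comparisons in Lemma~\ref{lem:propertyV}, and the substantive analytic work has already been carried out in Lemmas~\ref{lem:propertyV} and~\ref{lem:commonverification} and in Corollary~\ref{cor:main} itself.
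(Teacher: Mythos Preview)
Your proposal is correct and follows exactly the route the paper takes: the paper simply states that the result follows immediately from Corollary~\ref{cor:main}, and your write-up is a careful unpacking of the hypotheses of that corollary using Lemmas~\ref{lem:propertyV}, \ref{lem:commonverification}, \ref{lem:sufftech} and \ref{lem:xstar}. There is nothing to add.
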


\begin{figure}[H]
\includegraphics[scale=0.5]{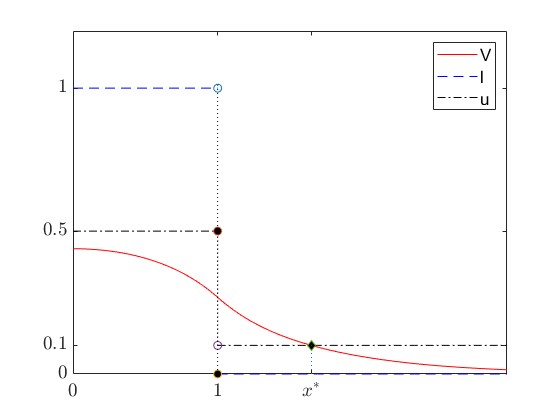}
\caption{The payoff functions and value of the game with payoff (\ref{eq:example}) under the common Poisson constraint, where we take $\lambda=r=1$ and $\epsilon=9$.}
\label{fig_sketch}
\end{figure}



\subsection{From the independent constraint to the common constraint: a counterexample}
Now we modify the game we solved in the previous subsection in such a way that the value functions for the independent and common constraint models no longer agree.

Let $V$ be as defined in \eqref{eq:value} with the constants as specified in Lemma~\ref{lem:propertyV}. Fix $\delta\in(0,\frac{x^*-1}{3})$ and define $\tilde{l}(x)=\1_{x\in \bDGH{(-1,1)}}+V(x^*-\delta)\1_{x\in(-x^*+2\delta,-1]\cup[1,x^*-2\delta)}$, and $\tilde{u}=u$.
Then, the payoff of the game is given by
\begin{eqnarray}\nonumber
    \tilde{R}(\tau,\sigma)&=&e^{-r\tau}(\1_{X_\tau\in \bDGH{(-1,1)}}+V(x^*-\delta)\1_{X_\tau\in(-x^*+2\delta,-1\bDGH{]}\cup\bDGH{[}1,x^*-2\delta)})\1_{\{\tau\leq \sigma\}}\\&&+e^{-r\sigma}\left(\frac{\phi^2-\theta^2}{\phi^2}\1_{X_\sigma\in[-1,1]}+\frac{1}{1+\epsilon}\1_{X_\sigma\notin [-1.1]}\right)\1_{\{\sigma<\tau\}}.\label{eq:payofftilde}
\end{eqnarray}
We want to argue that $V$ is the value of the game under the independent Poisson constraint but not under the common Poisson constraint.

Recall that $V$ is decreasing on $[0,\infty)$ and is an even function, therefore $V(x)>V(x^*-\delta)$ for $x\in (-x^*+2\delta,-1)\cup(1,x^*-2\delta)$. Hence $V>\tilde{l}>\tilde{u}$ holds on this set. Therefore the function $V$ satisfies the HJB equation $\mathcal{L}V-\frac{2\phi^2-\theta^2}{2}V +\frac{\phi^2-\theta^2}{2}\max(V,\tilde{l})+\frac{\phi^2-\theta^2}{2}\min(\tilde{u},V)=0$. Therefore, similar as the previous subsection, one can verify that $V$ is the value function of the game with payoff (\ref{eq:payofftilde}) under the independent Poisson constraint, and that $(\eta^{M}_{\bDGH{(-1,1)}}, \eta^{m}_{\bDGH{(-x^*,-1)\cup(1,x^*)}})$ is a saddle point. Hence, $v^I=V$, $A^I = \bDGH{(-1,1)}$ and $B^I= \bDGH{(-x^*,-1)\cup(1,x^*)}$.

\begin{figure}[H]
\includegraphics[scale=0.5]{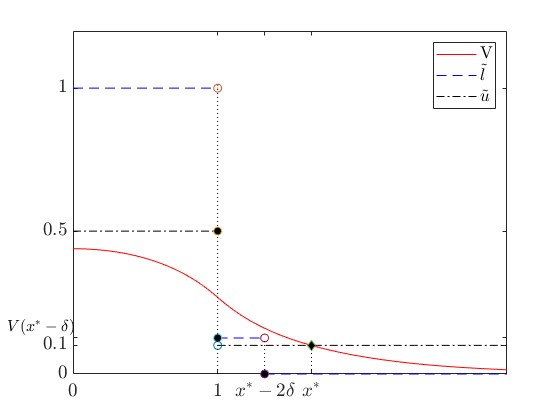}
\caption{The payoff functions and value of the game with payoff (\ref{eq:payofftilde}) under the independent Poisson constraint, where we take $\lambda=r=1$, $\epsilon=9$ and $\delta=\frac{x^*-1}{4}$. $V$ is not the value of the game under the common Poisson constraint as the set the set $\{x:\bDGH{V(x) \geq \tilde{l}(x)} >\tilde{u}(x)\}$ is nonempty. }
\label{fig_sketch2}
\end{figure}

However, $V$ does not satisfy $\mathcal{L}V-\frac{\phi^2}{2}V +\frac{\phi^2-\theta^2}{2}\max(\min(\tilde{u},V),\tilde{l})=0$. Indeed, on the set $(-x^*+2\delta,-1)\cup(1,x^*-2\delta)$, we have $\max(\min(\tilde{u},V),\tilde{l})=\tilde{l}>\frac{1}{1+\epsilon}$ by construction, and therefore $\mathcal{L}V-\frac{\phi^2}{2}V +\frac{\phi^2-\theta^2}{2}\max(\min(\tilde{u},V),\tilde{l})>0$ as $V$ satisfies $\mathcal{L}V-\frac{\phi^2}{2}V +\frac{\phi^2-\theta^2}{2} \frac{1}{1+\epsilon}=0$ by Lemma \ref{lem:propertyV}. Hence $V$ cannot be the value function of the game with payoff (\ref{eq:payofftilde}) under the common constraint and the \textsc{sup} player should deviate from $\eta^{M}_{\bDGH{(-1,1)}}$ and also stop on the set $(-x^*+2\delta,-1)\cup(1,x^*-2\delta)$. In particular, although $A^I$ and $B^I$ are disjoint, this does not imply $v^I=v^C$ or that $(\eta_{A^I}^C,\eta_{B^I}^C)$ is a saddle point.

\section{Existence of solutions for perpetual Dynkin games}
\label{sec:BSDE}

In this section, we establish existence and uniqueness results for the values of the Dynkin game under both common and independent constraints. In doing so, we verify that our standing assumptions are satisfied for a wide class of problems based on payoffs which are functions of an underlying diffusion. The mathematical innovation in this section is that we consider an infinite horizon BSDE: most of the extant literature considers a finite horizon BSDE which is typically simpler.

\subsection{Zero-sum Dynkin games under the common constraint}
\label{ssec:existenceC}


In this subsection we focus on the game under the common constraint. We will omit the superscript $C$ in this subsection for notational simplicity.

Instead of considering any filtration that supports the process $X$ and the Poisson process we work on a {minimal filtration to allow the application of martingale representation theorem}. Given the probability space $(\Omega,\mathcal{F},\mathbb{P})$ we define the filtration \(\mathbb{F}^* = \{\mathcal{F}^*_t\}_{t \geq 0}\) as the smallest filtration that contains both the natural filtration of $X$, \(\mathbb{F}^X=\{\mathcal{F}_t^X\}_{t\geq 0}\) and the natural filtration of the Poisson process, \(\mathbb{H}=\{\mathcal{H}_t\}_{t\geq 0}\), i.e., for each \(t \geq 0\),
$
\mathcal{F}^*_t = \sigma(\mathcal{F}_t^X \cup \mathcal{H}_t).
$
Finally, the filtration \(\mathbb{F}\) we use is the augmented version of \(\mathbb{F}^*\) chosen to satisfy the usual conditions of right-continuity and completeness with respect to \(\mathbb{P}\). 

For any $T\in (0,\infty]$, we define the spaces
\begin{eqnarray*}
    \mathcal{S}^2_{a,T}&=&\big\{\mathbb{F}\text{-progressively measurable processes}\ \mathbf{y}:  \lVert\mathbf{y}\rVert^2_{\mathcal{S}^2_{a,T}}<\infty\big\};\\
    \mathcal{H}^2_{a,T}&=&\big\{\mathbb{F}\text{-progressively measurable processes}\ Z:  \lVert Z \rVert^2_{\mathcal{H}^2_{a,T}}<\infty\big\}.
\end{eqnarray*}
where the weighted norms are defined as
\begin{equation*}
\lVert \mathbf{y}\rVert^2_{\mathcal{S}^2_{a,T}}=\E\left[\sup_{t\in[0,T]}e^{2at}\mathbf{y}_t^2\right];\quad
\lVert Z\rVert^2_{\mathcal{H}^2_{a,T}}=\E\left[\int_0^T e^{2at}Z_t^2 dt\right].
\end{equation*}
It is clear that for $T<\infty$, the above weighted norms are equivalent for any $a\in\mathbb{R}$. However, for $T=\infty$, the weighted norms become stronger as $a$ increases, and the corresponding space becomes smaller.

We work in the setting of a time-homogeneous diffusion process $X$ (with initial value $X_0=x$) and payoffs which are discounted functions of $X= X^x$.
\begin{assumption}\label{assumption:BSDE}
    For fixed initial value $x\in \sX$ the processes $l(X)=(l(X_t))_{t\geq 0}$ and $u(X)=(u(X_t))_{t\geq 0}$ satisfy the following:

    (1) $l(X),u(X)\in \mathcal{H}^2_{\alpha,\infty}$ for some $\alpha>-r$, where $r>0$ is the discount factor defined in Section 2.

    (2) {$\E[\sup_{t\geq 0}(L_t\vee U_t)]<\infty$}, and $\lim\limits_{t \uparrow \infty} L_t$ and $\lim\limits_{t \uparrow \infty} U_t$ both exist a.s.. We set $L_\infty = \lim\limits_{t \uparrow \infty} L_t$ and $U_\infty = \lim\limits_{t \uparrow \infty} U_t$.

\end{assumption}

Note that, (1) will be required for the BSDE argument. It will follow from Assumptions (1) and (2) that $L_\infty=0=U_\infty$, as we shall see in Lemma \ref{lem:UI}.

\begin{remark}
{A typical example in which the above assumption is satisfied is the following: let $l$ satisfy $|l(x)|\leq C(1+|x|)$, let $u$ satisfy $|u(x)|\leq C(1+|x|)$ and let $X$ be a geometric Brownian motion with $X_0=x$, drift $\mu$, and volatility $\sigma$ and suppose $r>\max\{\mu+\frac{1}{2}\sigma^2,0\}$.

Then we have $X_t=xe^{(\mu-\frac{1}{2}\sigma^2)t+\sigma W_t}$. It follows that $e^{2\alpha t}X_t^2=x^2e^{2(\alpha+\mu-\frac{1}{2}\sigma^2)t+2\sigma W_t}.$  Let $r>\max\{\mu+\frac{1}{2}\sigma^2,0\}$. In turn, {by an application of the inequality $|l(x)|^2\leq 2C^2(1+|x|^2)$,} $$\E[e^{2\alpha t}l(X_t)^2]\leq 2C^2\left(\exp(2\alpha t)+\exp\left(2\left(\alpha+\mu+\frac{1}{2}\sigma^2\right)t\right)\right).$$ It follows that $\E[\int_0^\infty e^{2\alpha t}l(X_t)^2dt]<\infty$ holds if $\mu+\frac{1}{2}\sigma^2+\alpha<0$ and $\alpha<0$, and further that $l(X)\in\mathcal{H}^2_{\alpha,\infty}$ for some $\alpha\in(-r,0)$.} 

{Furthermore, given that $\mu+\frac{1}{2}\sigma^2+\alpha<0$, it follows that $\mu<r$. The integrability of $\sup_tL_t$ is equivalent to the integrability of the maximum of a geometric Brownian motion with negative drift, which is immediate. It is also immediate that $L_t\rightarrow L_{\infty}=0$ a.s.. A similar argument applies to $u$ and $U$.
}\end{remark}

Consider the following infinite horizon BSDE defined on $[0,\infty)$,
\begin{equation}\label{eq:BSDE}
    d\mathbf{y}_t=-\left[\lambda \max\{l(X_t),\min(\mathbf{y}_t,u(X_t))\}-(\lambda+r)\mathbf{y}_t\right]\,dt+Z_t\,dW_t,\quad t\geq 0,
\end{equation}
subject to the asymptotic condition
\begin{equation}\label{boundary_cond}
\lim_{t\uparrow \infty}\E[e^{2\alpha t}\mathbf{y}_t^2]=0,
\end{equation}
where $\alpha$ is introduced in Assumption \ref{assumption:BSDE}. 

A solution to BSDE (\ref{eq:BSDE}) {subject to the asymptotic condition (\ref{boundary_cond})} is a pair of $\mathbb{F}$-progressively measurable processes $(\mathbf{y},Z)$ satisfying
\begin{equation*}
    \mathbf{y}_t=\mathbf{y}_T+\int_t^T\left[\lambda \max\{l(X_s),\min(\mathbf{y}_s,u(X_s))\}-(\lambda+r)\mathbf{y}_s\right]\,ds-\int_t^T Z_s\,dW_s
\end{equation*}
for $0\leq t\leq T<\infty$, and such that the asymptotic condition (\ref{boundary_cond}) holds. We aim to find a solution $(\mathbf{y},Z)$ in the spaces $ \mathcal{S}^2_{\alpha,\infty}\times\mathcal{H}^2_{\alpha,\infty}$.


The main idea behind solving {the BSDE (\ref{eq:BSDE}) subject to the asymptotic condition (\ref{boundary_cond})} is to first approximate it by a sequence of finite horizon BSDEs and establish uniform estimates for their solutions. The existence of a solution to the infinite-horizon problem (Proposition~\ref{prop:existence}) then follows from a fairly standard compactness argument. However, since we are dealing with unbounded solutions for infinite horizon BSDEs, whereas the majority of results in this direction only consider bounded solutions, we provide a proof in the Appendix. 
\begin{proposition}\label{prop:existence}
    Suppose that Assumption \ref{assumption:BSDE} holds. Then there exists a unique solution $(\mathbf{y},Z)\in\mathcal{S}^2_{\alpha,\infty}\times \mathcal{H}^2_{\alpha,\infty}$ to BSDE (\ref{eq:BSDE}) {subject to the asymptotic condition (\ref{boundary_cond})}.
\end{proposition}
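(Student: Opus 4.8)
The strategy is the one sketched in the text: solve the infinite-horizon BSDE \eqref{eq:BSDE}--\eqref{boundary_cond} as a limit of finite-horizon approximations. For each $T<\infty$ consider the BSDE on $[0,T]$ with generator $g(t,y)=\lambda\max\{l(X_t),\min(y,u(X_t))\}-(\lambda+r)y$ and terminal condition $\mathbf{y}^T_T=0$. Since $y\mapsto\max\{l,\min(y,u)\}$ is $1$-Lipschitz, $g$ is Lipschitz in $y$ with a linear growth term $-(\lambda+r)y$; combined with $l(X),u(X)\in\mathcal H^2_{\alpha,\infty}\subset\mathcal H^2_{\alpha,T}$ (and the equivalence of the weighted norms on $[0,T]$), standard finite-horizon BSDE theory gives a unique solution $(\mathbf{y}^T,Z^T)\in\mathcal S^2_{0,T}\times\mathcal H^2_{0,T}$. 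Extend $(\mathbf{y}^T,Z^T)$ to all of $[0,\infty)$ by setting it equal to $0$ after time $T$ (or by freezing at the terminal value), so that all approximants live on a common space.

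The crux is the \emph{uniform a priori estimate}: I would show that $\sup_T\big(\lVert\mathbf{y}^T\rVert^2_{\mathcal S^2_{\alpha,\infty}}+\lVert Z^T\rVert^2_{\mathcal H^2_{\alpha,\infty}}\big)<\infty$, with the bound depending only on $\alpha$, $\lambda$, $r$ and $\lVert l(X)\rVert_{\mathcal H^2_{\alpha,\infty}}$, $\lVert u(X)\rVert_{\mathcal H^2_{\alpha,\infty}}$. This is the step where the infinite horizon and the weight $e^{2\alpha t}$ matter, and where one must use $\alpha>-r$ crucially: applying It\^o's formula to $e^{2\alpha t}(\mathbf{y}^T_t)^2$ produces a term $\int(2\alpha-2(\lambda+r))e^{2\alpha s}(\mathbf{y}^T_s)^2\,ds$ together with a cross term controlled by $e^{2\alpha s}|\mathbf{y}^T_s|\,|\lambda\max\{l,\min(\mathbf{y}^T_s,u)\}|\le e^{2\alpha s}|\mathbf{y}^T_s|(\lambda|\mathbf{y}^T_s|+\lambda(|l(X_s)|+|u(X_s)|))$; choosing the Young's-inequality split so that the net coefficient of $e^{2\alpha s}(\mathbf{y}^T_s)^2$ stays strictly negative (possible exactly because $\alpha+r>0$ gives room beyond the $\lambda$ from the generator) lets one absorb the quadratic term and close the estimate, bounding $\E\!\int_0^\infty e^{2\alpha s}(\mathbf{y}^T_s)^2\,ds$ and then $\lVert Z^T\rVert^2_{\mathcal H^2_{\alpha,\infty}}$ by $\lVert l(X)\rVert^2_{\mathcal H^2_{\alpha,\infty}}+\lVert u(X)\rVert^2_{\mathcal H^2_{\alpha,\infty}}$. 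The $\mathcal S^2_{\alpha,\infty}$ bound on $\mathbf{y}^T$ then follows from Burkholder--Davis--Gundy applied to the martingale part. I expect this uniform estimate to be the main obstacle: one has to be careful that the constants do not blow up as $T\to\infty$ and that the weight is handled consistently across the It\^o expansion, the cross term, and the BDG step.

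Next I would pass to the limit. Using the uniform bounds, extract a subsequence along which $Z^T\rightharpoonup Z$ weakly in $\mathcal H^2_{\alpha,\infty}$; to upgrade to strong convergence (needed to pass to the limit in the nonlinear generator) I would, for $T'>T$, apply It\^o to $e^{2\alpha t}(\mathbf{y}^{T'}_t-\mathbf{y}^T_t)^2$ on $[0,T]$ and exploit the Lipschitz/monotone structure of $g$ together with the control of the terminal discrepancy $\E[e^{2\alpha T}(\mathbf{y}^{T'}_T-\mathbf{y}^T_T)^2]$ (which is small by the uniform estimate and the asymptotic decay it forces), obtaining that $(\mathbf{y}^T)_T$ is Cauchy in $\mathcal S^2_{\alpha,\infty}$ and $(Z^T)_T$ is Cauchy in $\mathcal H^2_{\alpha,\infty}$. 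The limit $(\mathbf{y},Z)$ lies in $\mathcal S^2_{\alpha,\infty}\times\mathcal H^2_{\alpha,\infty}$, satisfies the integral form of \eqref{eq:BSDE} on every $[t,T]$ (pass to the limit term by term, using $1$-Lipschitz continuity of $y\mapsto\max\{l,\min(y,u)\}$ for the generator), and the membership $\mathbf{y}\in\mathcal S^2_{\alpha,\infty}$ forces $\E[e^{2\alpha t}\mathbf{y}_t^2]\to 0$, i.e.\ the asymptotic condition \eqref{boundary_cond}.

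Finally, uniqueness: given two solutions $(\mathbf{y},Z)$ and $(\tilde{\mathbf{y}},\tilde Z)$ in $\mathcal S^2_{\alpha,\infty}\times\mathcal H^2_{\alpha,\infty}$, apply It\^o to $e^{2\alpha t}(\mathbf{y}_t-\tilde{\mathbf{y}}_t)^2$ on $[t,T]$, use the same sign-of-coefficient computation as in the a priori estimate (the $-(\lambda+r)$ drift dominating the $\lambda$-Lipschitz generator after the weight) to get $e^{2\alpha t}\E[(\mathbf{y}_t-\tilde{\mathbf{y}}_t)^2]\le\E[e^{2\alpha T}(\mathbf{y}_T-\tilde{\mathbf{y}}_T)^2]$, and let $T\to\infty$: the right side vanishes by \eqref{boundary_cond} for both solutions, giving $\mathbf{y}\equiv\tilde{\mathbf{y}}$ and then $Z\equiv\tilde Z$. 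The one genuinely delicate point throughout is the bookkeeping of the exponential weight in each It\^o expansion so that the condition $\alpha>-r$ is used exactly where it is needed and the estimates remain uniform in the horizon.
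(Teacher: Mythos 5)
Your proposal follows essentially the same route as the paper: finite-horizon approximants with zero terminal condition, extended by zero to $[0,\infty)$; It\^o's formula applied to $e^{2\alpha t}(\cdot)^2$ with a Young's-inequality split whose room comes exactly from $\alpha+r>0$; convergence in $\mathcal S^2_{\alpha,\infty}\times\mathcal H^2_{\alpha,\infty}$; and uniqueness by the same weighted estimate. The paper skips your intermediate weak-convergence step and goes straight to a Cauchy estimate by applying It\^o to $e^{2\alpha t}(\mathbf{y}_t(k')-\mathbf{y}_t(k))^2$ on all of $[0,k']$, where the only surviving error is the tail term $\E\bigl[\int_k^{k'}e^{2\alpha s}\lambda\delta^2 l(X_s)^2\,ds\bigr]\to 0$ coming from the fact that $\mathbf{y}(k)\equiv 0$ on $[k,k']$ while the driver evaluated at $0$ equals $\lambda l(X_s)$.

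One point in your plan needs repair: you claim the terminal discrepancy $\E[e^{2\alpha T}(\mathbf{y}^{T'}_T-\mathbf{y}^T_T)^2]=\E[e^{2\alpha T}(\mathbf{y}^{T'}_T)^2]$ is ``small by the uniform estimate and the asymptotic decay it forces.'' A uniform bound on $\E[\sup_t e^{2\alpha t}(\mathbf{y}^{T'}_t)^2]$ does \emph{not} force $\E[e^{2\alpha T}(\mathbf{y}^{T'}_T)^2]\to 0$ as $T\to\infty$ uniformly in $T'$ (consider $\mathbf{y}_t=e^{-\alpha t}$). The correct control is obtained by running your a priori estimate on the interval $[T,T']$ rather than $[0,T']$, which bounds $\E[e^{2\alpha T}(\mathbf{y}^{T'}_T)^2]$ by $C\,\E\bigl[\int_T^{\infty}e^{2\alpha s}\bigl(l(X_s)^2+u(X_s)^2\bigr)\,ds\bigr]$, and this tends to zero by Assumption~\ref{assumption:BSDE}(1). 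Once this is fixed, your argument closes and coincides in substance with the paper's; the weak-convergence detour is then superfluous, since the Cauchy property already gives strong limits.
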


\begin{lemma}\label{lem:DPE0}
   Let $(\mathbf{y},Z)$ be the unique solution of (\ref{eq:BSDE}) {subject to the asymptotic condition (\ref{boundary_cond})}. Then for $n\geq 0$, $\mathbf{y}$ is also a solution to the following recursive equation for $n\geq 0$:
\begin{equation}\label{eq:DPE0}
    e^{-rT_n}\mathbf{y}_{T_n}=\E[e^{-rT_{n+1}}\max\{l(X_{T_{n+1}}),\min(\mathbf{y}_{T_{n+1}},u(X_{T_{n+1}}))\}|\mathcal{F}_{T_n}].
\end{equation}
\end{lemma}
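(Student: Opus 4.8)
The plan is to package the BSDE dynamics and the Poisson clock into a single martingale, and then read off \eqref{eq:DPE0} by optional stopping at the successive signal times $T_n\le T_{n+1}$ (with the convention $T_0=0$ for the case $n=0$). Set $g(s):=\max\{l(X_s),\min(\mathbf{y}_s,u(X_s))\}$, so that \eqref{eq:BSDE} reads $d\mathbf{y}_t=\big[(\lambda+r)\mathbf{y}_t-\lambda g(t)\big]\,dt+Z_t\,dW_t$. Multiplying by $e^{-rt}$ gives, for $t\ge 0$,
\[
e^{-rt}\mathbf{y}_t=\mathbf{y}_0+\int_0^t e^{-rs}Z_s\,dW_s+\lambda\int_0^t e^{-rs}\big(g(s)-\mathbf{y}_s\big)\,ds .
\]
Since $N$ is independent of $X$, its $\mathbb{F}$-compensator is $\lambda t$, so by the compensation formula the process $\sum_{0<T_k\le t}e^{-rT_k}\big(g(T_k)-\mathbf{y}_{T_k}\big)-\lambda\int_0^t e^{-rs}\big(g(s)-\mathbf{y}_s\big)\,ds$ is an $\mathbb{F}$-martingale (its integrand is a functional of the continuous processes $X$ and $\mathbf{y}$, which after passage to its left-continuous version is $\mathbb{F}$-predictable; this modification is immaterial at the times $T_k$, which almost surely avoid the Lebesgue-null set of times at which $l(X_\cdot)$ or $u(X_\cdot)$ jumps). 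Adding the two displays, the Lebesgue integrals cancel, and so
\[
\Xi_t:=e^{-rt}\mathbf{y}_t+\sum_{0<T_k\le t}e^{-rT_k}\big(g(T_k)-\mathbf{y}_{T_k}\big)
\]
is an $\mathbb{F}$-martingale. Applying optional stopping gives $\E[\Xi_{T_{n+1}}\mid\mathcal{F}_{T_n}]=\Xi_{T_n}$; since the two sums appearing in $\Xi_{T_{n+1}}$ and $\Xi_{T_n}$ telescope down to the single term $e^{-rT_{n+1}}\big(g(T_{n+1})-\mathbf{y}_{T_{n+1}}\big)$, one has $\Xi_{T_{n+1}}-\Xi_{T_n}=e^{-rT_{n+1}}g(T_{n+1})-e^{-rT_n}\mathbf{y}_{T_n}$, and taking $\E[\,\cdot\mid\mathcal{F}_{T_n}]$ yields precisely \eqref{eq:DPE0}.

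The substantive part is the integrability bookkeeping needed to make $\Xi$ a genuine — in fact uniformly integrable — martingale over the infinite horizon, since $(\mathbf{y},Z)$ is only known to lie in the weighted spaces $\mathcal{S}^2_{\alpha,\infty}\times\mathcal{H}^2_{\alpha,\infty}$ with possibly $\alpha\in(-r,0)$. The inequality $\alpha>-r$ from Assumption~\ref{assumption:BSDE}(1) is exactly what makes this work: because $r+\alpha>0$ we get $e^{-rt}|\mathbf{y}_t|\le e^{\alpha t}|\mathbf{y}_t|\le\sup_{s\ge 0}e^{\alpha s}|\mathbf{y}_s|\in L^2$, and $e^{-2rt}\le e^{2\alpha t}$, so $\int_0^{\cdot}e^{-rs}Z_s\,dW_s$ is an $L^2$-bounded martingale; moreover $\E\sum_{k\ge 1}e^{-rT_k}\big(|g(T_k)|+|\mathbf{y}_{T_k}|\big)=\lambda\int_0^\infty e^{-rs}\E\big[|g(s)|+|\mathbf{y}_s|\big]\,ds<\infty$ by Cauchy--Schwarz, using $|g(s)|\le|l(X_s)|+|u(X_s)|+|\mathbf{y}_s|$ together with $l(X),u(X)\in\mathcal{H}^2_{\alpha,\infty}$, $\mathbf{y}\in\mathcal{S}^2_{\alpha,\infty}$ and $r+\alpha>0$. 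Hence $|\Xi_t|$ is dominated, uniformly in $t$, by a fixed integrable random variable, so $\Xi$ is uniformly integrable on $[0,\infty]$ and optional stopping applies directly at the stopping times $T_n\le T_{n+1}$. This is the only delicate point; everything else is a direct computation.

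An alternative, closer in spirit to the (commented-out) proof of Lemma~\ref{lem:commonverification}, uses instead the integrating factor $e^{-(\lambda+r)t}$: then $e^{-(\lambda+r)t}\mathbf{y}_t+\lambda\int_0^t e^{-(\lambda+r)s}g(s)\,ds$ is a uniformly integrable martingale, letting $t\uparrow\infty$ kills its first term (as $\lambda+r+\alpha>0$), and one deduces $\mathbf{y}_{T_n}=\lambda\,\E\big[\int_{T_n}^\infty e^{-(\lambda+r)(s-T_n)}g(s)\,ds\mid\mathcal{F}_{T_n}\big]$; the right-hand side equals $\E\big[e^{-r(T_{n+1}-T_n)}g(T_{n+1})\mid\mathcal{F}_{T_n}\big]$ upon conditioning further on the post-$T_n$ trajectory of $X$ and integrating the $\mathrm{Exp}(\lambda)$ law of $T_{n+1}-T_n$, and multiplying by $e^{-rT_n}$ finishes. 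This version requires $(\mathbf{y},Z)$ to be adapted to the augmented Brownian filtration $\mathbb{F}^X$ — which is automatic, since the $\mathbb{F}^X$-solution of \eqref{eq:BSDE} is also an $\mathbb{F}$-solution and hence, by the uniqueness in Proposition~\ref{prop:existence}, coincides with $(\mathbf{y},Z)$ — so that $g(s)$ for $s\ge T_n$ is a measurable functional of $s$ and of the $X$-path alone. The two routes are of comparable length; I would present the first.
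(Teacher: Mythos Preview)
Your proof is correct, and interestingly your \emph{alternative} route is the one the paper actually takes, while your primary route is genuinely different.

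The paper uses the integrating factor $e^{-(\lambda+r)t}$: it shows that $e^{-(\lambda+r)t}\mathbf{y}_t+\lambda\int_0^t e^{-(\lambda+r)s}g(s)\,ds$ is a martingale (the stochastic integral is UI because $Z\in\mathcal{H}^2_{-(\lambda+r),\infty}$), takes conditional expectation on $\mathcal{F}_{T_n}$ for finite $T>T_n$, and then explicitly integrates against the $\mathrm{Exp}(\lambda)$ law of $T_{n+1}-T_n$ given $\mathcal{F}_{T_n}$ to convert the time integral into $\E[e^{-rT_{n+1}}g(T_{n+1})\1_{T_{n+1}<T}\mid\mathcal{F}_{T_n}]$, before sending $T\uparrow\infty$. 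This is exactly your second route, except that the paper works with finite $T$ throughout and passes to the limit at the end rather than first.

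Your primary route keeps the discount at $e^{-rt}$ and replaces the explicit exponential-density computation by the compensation formula for $N$, packaging everything into the single martingale $\Xi$. This is cleaner in that optional stopping at $T_n\le T_{n+1}$ delivers \eqref{eq:DPE0} in one line with no limiting argument; the price is the predictability discussion for the compensated sum, which the paper's route avoids entirely since it only ever uses the one-step waiting-time law. Your integrability bookkeeping (domination of $|\Xi_t|$ by a fixed integrable variable via $\alpha>-r$) is correct and is the right way to justify optional stopping here.

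One small slip: the drift in your first display should be $\lambda\int_0^t e^{-rs}(\mathbf{y}_s-g(s))\,ds$, not $g(s)-\mathbf{y}_s$. With the correct sign, the Lebesgue integral in the It\^{o} expansion of $e^{-rt}\mathbf{y}_t$ and the compensator of the Poisson sum really do cancel upon addition, and $\Xi$ is the martingale you claim; as written the cancellation would go the wrong way. The conclusion is unaffected.
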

\begin{proof}
    {Given that $(\mathbf{y},Z)\in\mathcal{S}^2_{\alpha,\infty}\times \mathcal{H}^2_{\alpha,\infty}$ with $\alpha>-r$ and that the weighted norms $\lVert \cdot \rVert^2_{\mathcal{S}^2_{a,\infty}}$ and $\lVert \cdot \rVert^2_{\mathcal{H}^2_{a,\infty}}$ become stronger as $a$ increases, it follows that $(\mathbf{y},Z)\in\mathcal{S}^2_{-r,\infty}\times \mathcal{H}^2_{-(\lambda+r),\infty}$.}

    Applying It\^{o}'s formula, we obtain the following expression for $T>t\geq 0$:
\[e^{-(\lambda+r)t}\mathbf{y}_t=e^{-(\lambda+r)T}\mathbf{y}_T+\int_t^Te^{-(\lambda+r)s}\lambda\max\{l(X_s),\min(\mathbf{y}_s,u(X_s))\}\,ds-\int_t^Te^{-(\lambda+r)s}Z_s\,dW_s.\]

By the result that $Z\in\mathcal{H}^2_{-(\lambda+r),\infty}$, the stochastic integral term is square integrable hence is a uniformly integrable martingale. For $T>T_n$, utilizing the density of $T_{n+1}-T_n$ conditional on $\mathcal{F}_{T_n}$, we have:
\begin{align*}
    e^{-rT_n}\mathbf{y}_{T_n} &= \E\bigg[e^{-\lambda(T - T_n)} e^{-rT} \mathbf{y}_T \\
    &\quad + \int_{T_n}^T e^{-\lambda(s - T_n)} e^{-rs} \lambda 
    \max\{ l(X_s), \min(\mathbf{y}_s, u(X_s)) \} \, ds \, \bigg| \, \mathcal{F}_{T_n}\bigg] \\
    &= \E\bigg[ e^{-rT} \mathbf{y}_T \, \mathbf{1}_{\{T_{n+1} \geq T\}} \\
    &\quad + e^{-rT_{n+1}} \lambda 
    \max\{ l(X_{T_{n+1}}), \min(\mathbf{y}_{T_{n+1}}, u(X_{T_{n+1}})) \} 
    \mathbf{1}_{\{T_{n+1} < T\}} \, \bigg| \, \mathcal{F}_{T_n} \bigg].
\end{align*}
{Since $\mathbf{y}\in \mathcal{S}^2_{-r,\infty}$, the term $\E[e^{-rT}\mathbf{y}_T\1_{\{T_{n+1}\geq T\}}]$ vanishes as we take $T\uparrow \infty$. Thus, the result follows by {monotone convergence.}}
\end{proof}

We have proved that, under Assumption \ref{assumption:BSDE}, there exists a solution $(\mathbf{y}_{T_n})_{n\geq 0}$ to the recursive equation (\ref{eq:DPE0}). In the remaining part of this section, we will show that the solution to the recursive equation (\ref{eq:DPE0}) defines the value of the game with payoff $R$ under the common constraint, and an assumption that $M_\infty=0$. (Note that by Lemma~\ref{lem:UI} $L_\infty=0=U_\infty$ so that in this case the only natural candidate value for $M_\infty$ is zero.) 


{We need uniform integrability for the verification of game value, and this is covered by the following lemma:}

\begin{lemma}\label{lem:UI}
   Let $(\mathbf{y},Z)$ be the unique solution of BSDE (\ref{eq:BSDE}) {subject to the asymptotic condition (\ref{boundary_cond})}. {Define $Y_{t}=e^{-rt}\mathbf{y}_t$ and $\hat{Y}_{t}=\max\{\min\{U_{t}, Y_{t}\},L_{t}\}$.

   Then the following results hold:

   (1) $\E\left[\sup_{t\in[0,\infty)}Y_t\right]<\infty$; $\E\left[\sup_{t\in[0,\infty)}\hat{Y}_t\right]<\infty$.

   (2) $L_t,U_t,Y_t,\hat{Y}_t\rightarrow 0$ a.s. as $t\rightarrow \infty$.}

\end{lemma}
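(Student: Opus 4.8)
The plan is to deduce both assertions from the BSDE representation by bounding $Y$ between processes built out of $L$ and $U$, and then invoking the integrability and convergence of $L,U$ from Assumption~\ref{assumption:BSDE}(2). First I would establish two-sided bounds on $\mathbf{y}$ (equivalently on $Y_t = e^{-rt}\mathbf{y}_t$). For the upper bound, observe that the driver satisfies $\lambda\max\{l(X_t),\min(\mathbf{y}_t,u(X_t))\}-(\lambda+r)\mathbf{y}_t \le \lambda\max\{l(X_t),u(X_t)\}-(\lambda+r)\mathbf{y}_t$ whenever... more carefully, use the recursive equation~\eqref{eq:DPE0}: since $\max\{l,\min(\mathbf{y},u)\} \le l\vee u$ pointwise, iterating~\eqref{eq:DPE0} gives $Y_{T_n}=e^{-rT_n}\mathbf{y}_{T_n}\le \E[\sum_{k>n} \text{(telescoping)} \mid \mathcal F_{T_n}]$; more directly, from~\eqref{eq:DPE0} one gets $Y_{T_n}\le \E[\sup_{t\ge 0}(L_t\vee U_t)\mid\mathcal F_{T_n}]$ by a supermartingale-type domination, and then interpolating over $[T_n,T_{n+1}]$ via the BSDE dynamics (on which interval $\mathbf{y}$ solves a linear ODE perturbed by a martingale) controls $\sup_{t}Y_t$. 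The cleanest route is: the process $e^{-(\lambda+r)t}\mathbf{y}_t + \int_0^t e^{-(\lambda+r)s}\lambda(L_s\vee U_s)\,ds$ (in the discounted-by-$e^{-rt}$ scaling, i.e. working with $Y$) dominates a martingale, yielding $Y_t \le \E[\int_t^\infty \lambda e^{-\lambda(s-t)}(L_s\vee U_s)\,ds\mid\mathcal F_t] \le \E[\sup_{s\ge 0}(L_s\vee U_s)\mid\mathcal F_t]$, and Doob's maximal inequality applied to this martingale gives $\E[\sup_t Y_t]<\infty$ using Assumption~\ref{assumption:BSDE}(2).

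For the lower bound, I would argue $Y_t \ge -\,\E[\,\sup_{s\ge 0}(L_s\vee U_s)\mid\mathcal F_t\,]$ in the same way (or simply note $\max\{l,\min(\mathbf y,u)\}\ge l \ge 0$ by nonnegativity of $l$, which via~\eqref{eq:DPE0} already forces $Y_{T_n}\ge 0$, hence $\mathbf y_{T_n}\ge 0$; interpolating on $[T_n,T_{n+1}]$ where $\mathbf{y}$ obeys $d\mathbf y_t = -[\lambda\max\{\cdots\}-(\lambda+r)\mathbf y_t]dt + Z_t dW_t$ and the driver term is nonnegative, $e^{-(\lambda+r)t}\mathbf y_t$ plus an increasing term is a martingale on that interval, so $\mathbf y$ stays $\ge 0$). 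Thus $0\le Y_t$, which immediately gives $0\le \hat Y_t = \max\{\min\{U_t,Y_t\},L_t\}\le Y_t \vee U_t \le \sup_s(L_s\vee U_s) + \sup_s Y_s$ (using $L_t\le L_t\vee U_t$). Taking suprema and expectations yields $\E[\sup_t \hat Y_t]<\infty$, completing part~(1).

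For part~(2), by Assumption~\ref{assumption:BSDE}(2) the limits $L_\infty=\lim_t L_t$ and $U_\infty=\lim_t U_t$ exist a.s. To see $L_\infty=U_\infty=0$: the bound $0\le Y_t \le \E[\sup_{s\ge t}(L_s\vee U_s)\mid\mathcal F_t]$, together with the asymptotic condition~\eqref{boundary_cond} (which forces $Y_t\to 0$ in an $L^2$-weighted sense, hence along a subsequence a.s., and with the monotone structure from the BSDE actually a.s.), and the fact that $L,U\ge 0$ converge, pins down $L_\infty,U_\infty$. More directly: iterate~\eqref{eq:DPE0} forward to write $Y_{T_n}=\E[e^{-rT_m}\max\{l,\min(\mathbf y,u)\}(X_{T_m})\mid\mathcal F_{T_n}]$ for all $m>n$; letting $m\to\infty$, dominated convergence (justified by part~(1)) and $Y_{T_n}\to Y_\infty$ force the integrand to converge to $Y_\infty$ with $Y_\infty = \max\{L_\infty,\min(Y_\infty,U_\infty)\}$; combined with~\eqref{boundary_cond} giving $Y_\infty=0$, and $0\le L_\infty\le Y_\infty=0$ from $\max\{L_\infty,\cdot\}=0$, we get $L_\infty=0$, then $U_\infty$: since $\min(0,U_\infty)=0$ requires $U_\infty\ge 0$ which holds, but we also need $U_\infty=0$ — this follows because $\hat Y_t\to 0$ would then be automatic once $L_\infty=Y_\infty=0$ regardless of $U_\infty$, yet the symmetric upper bound $Y_t$... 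In fact $U_\infty=0$ should come from Assumption~\ref{assumption:BSDE}(1): $u(X)\in\mathcal H^2_{\alpha,\infty}$ with $\alpha>-r$ forces $\int_0^\infty e^{2\alpha t}u(X_t)^2\,dt<\infty$ a.s., and since $U_t=e^{-rt}u(X_t)$ with $-r<\alpha$... rather, $\int_0^\infty e^{2\alpha t} u(X_t)^2 dt <\infty$ a.s. together with $U_t^2 = e^{-2rt}u(X_t)^2 = e^{-2(r+\alpha)t}\cdot e^{2\alpha t}u(X_t)^2$ and $r+\alpha>0$ shows $U_t\to 0$ along a subsequence; but $U_\infty$ exists, so $U_\infty=0$; identically $L_\infty=0$. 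Finally, $Y_t\to 0$ a.s. follows from~\eqref{boundary_cond} upgraded to a.s. convergence via part~(1) and a Borel–Cantelli / martingale argument along the $T_n$, and then $\hat Y_t=\max\{\min\{U_t,Y_t\},L_t\}\to \max\{\min\{0,0\},0\}=0$.

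\textbf{Main obstacle.} The delicate point is upgrading the $L^2$-type decay~\eqref{boundary_cond} to genuine a.s. convergence $Y_t\to 0$, and correspondingly pinning down $U_\infty=0$ rather than merely $L_\infty=0$; I expect this to require combining the recursive representation~\eqref{eq:DPE0} (which gives monotone/supermartingale structure along $(T_n)$), the uniform integrability from part~(1), and the $\mathcal H^2_{\alpha,\infty}$ integrability of $u(X)$ from Assumption~\ref{assumption:BSDE}(1) to force $U_t\to0$, with the interpolation over inter-arrival intervals handled by the linear BSDE dynamics there.
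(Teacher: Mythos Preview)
Your approach takes a significantly longer route than the paper's, and has a genuine gap in part~(1). You try to bound $Y_t$ above by the martingale $M_t=\E[\sup_{s\ge 0}(L_s\vee U_s)\mid\mathcal F_t]$ and then invoke Doob's maximal inequality to conclude $\E[\sup_t Y_t]<\infty$. But Assumption~\ref{assumption:BSDE}(2) only gives $\sup_s(L_s\vee U_s)\in L^1$, and Doob's $L^1$ maximal inequality does \emph{not} yield $\E[\sup_t M_t]<\infty$; you would need $L^p$ for some $p>1$ (or $L\log L$), which is not assumed. So this step fails as written.

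More importantly, you are overlooking the key structural fact that makes the lemma almost immediate: by Proposition~\ref{prop:existence}, the solution $\mathbf{y}$ lies in $\mathcal S^2_{\alpha,\infty}$ with $\alpha>-r$. This single piece of information does all the work. Since $Y_t=e^{-rt}\mathbf y_t$, one has $Y\in\mathcal S^2_{\alpha+r,\infty}$, i.e.\ $\E[\sup_t e^{2(\alpha+r)t}Y_t^2]<\infty$. Because $\alpha+r>0$, this immediately gives $\E[\sup_t Y_t^2]<\infty$ (hence $\E[\sup_t|Y_t|]<\infty$), and pathwise $|Y_t|\le Ce^{-(\alpha+r)t}\to 0$ a.s. The integrability and a.s.\ convergence of $\hat Y$ then follow from those of $L,U,Y$. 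For $L_\infty=U_\infty=0$, the paper uses exactly the argument you eventually arrive at: $l(X),u(X)\in\mathcal H^2_{\alpha,\infty}$ forces $\int_0^\infty L_t^2\,dt,\int_0^\infty U_t^2\,dt<\infty$ a.s., and since the limits $L_\infty,U_\infty$ exist by assumption they must vanish.

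In short, your ``main obstacle'' --- upgrading \eqref{boundary_cond} to a.s.\ convergence of $Y_t$ --- disappears once you use that $\mathbf y\in\mathcal S^2_{\alpha,\infty}$ rather than merely the asymptotic condition \eqref{boundary_cond}. There is no need for the recursive equation~\eqref{eq:DPE0}, comparison arguments, interpolation over inter-arrival intervals, or Borel--Cantelli.
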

\begin{proof}
    See Appendix.
\end{proof}






In the game under the common constraint, the game starts at time $0$ and players are not allowed to stop immediately, so $T_1$ is the first time when the players can stop. We want to consider an auxiliary game with the same payoff as defined in (\ref{eq:gamepayoff}), but allowing agents an additional opportunity to stop at time 0.


Let $T_0=0$ and let $\mathcal{R}_0(\lambda) = \mathcal{R}(\lambda) \cup \{ T_0 \}$. Then the upper and lower values of this auxiliary game are:
\begin{eqnarray}
    \overline{\nu}_{0}&=&\inf_{\sigma\in\mathcal{R}_{0}(\lambda)}\sup_{\tau\in\mathcal{R}_{0}(\lambda)}\E[R(\tau,\sigma) ];\\
    \underline{\nu}_{0}&=&\sup_{\tau\in\mathcal{R}_{0}(\lambda)}\inf_{\sigma\in\mathcal{R}_{0}(\lambda)}\E[R(\tau,\sigma) ].
\end{eqnarray}

We will consider the dynamic versions of the original and auxiliary games. Consider the problem where two players aim to maximize/minimize $R(\tau,\sigma)$ conditioning on the information at time $T_k$. Given any $\tau,\sigma$, the outcome of the problem is a random variable, and should be maximized/minimized in the sense of essential supremum/infimum.
For $k \geq 1$ let $\mathcal{R}_k(\lambda)=\{\gamma: \gamma\, \textrm{ is  an}\, \mathbbm{F}\textrm{-stopping time such that } \gamma(\omega)=T_n(\omega)\, \textrm{for some}\, n\in \{k, \dots \infty \}\}$. (Note that putting $k=0$ in this definition recovers $\mathcal{R}_0$ defined above.)
Then the set of admissible strategies for the dynamic version of the original game is $\mathcal{R}_{k+1}(\lambda)$, and the set of admissible strategies for the dynamic version of the auxiliary game is $\mathcal{R}_{k}(\lambda)$. Hence we define the following upper and lower values for the dynamic version of the original game as:
\begin{eqnarray}
    \overline{{v}}_{T_k}&=&\essinf_{\sigma\in \mathcal{R}_{k+1}(\lambda)}\esssup_{\tau\in \mathcal{R}_{k+1}(\lambda)}\E[R(\tau,\sigma)|\mathcal{F}_{T_{k}}] \label{eq:dynamicpayoffo1};\\
    \underline{{v}}_{T_k}&=&\esssup_{\tau\in \mathcal{R}_{k+1}(\lambda)}\essinf_{\sigma\in \mathcal{R}_{k+1}(\lambda)}\E[R(\tau,\sigma)|\mathcal{F}_{T_{k}}] \label{eq:dynamicpayoffo2}.
\end{eqnarray}
We also define the upper and lower values for the dynamic version of the auxiliary game as:
\begin{eqnarray}
    \overline{\nu}_{T_k}&=&\essinf_{\sigma\in \mathcal{R}_k(\lambda)}\esssup_{\tau\in \mathcal{R}_k(\lambda)}\E[R(\tau,\sigma)|\mathcal{F}_{T_{k}}] \label{eq:dynamicpayoffa1};\\
    \underline{\nu}_{T_k}&=&\esssup_{\tau\in \mathcal{R}_k(\lambda)}\essinf_{\sigma\in \mathcal{R}_k(\lambda)}\E[R(\tau,\sigma)|\mathcal{F}_{T_{k}}] \label{eq:dynamicpayoffa2}.
\end{eqnarray}
\begin{lemma}\label{lem:auxiliaryvalue}
    Let $(\mathbf{y},Z)$ be the unique solution of (\ref{eq:BSDE}). Let $Y_{t}$ and $\hat{Y}_{t}$ be as defined in Lemma \ref{lem:UI}, with $\hat{Y}_\infty=0$.

    Then $\hat{Y}_{T_k}=\overline{\nu}_{T_k}=\underline{\nu}_{T_k}$, where $\overline{\nu}_{T_k}$ and $\underline{\nu}_{T_k}$ are as defined in (\ref{eq:dynamicpayoffa1}) and (\ref{eq:dynamicpayoffa2}), respectively. Moreover, we have $\hat{Y}_{T_k}=\E[R(\tau^*_k,\sigma^*_k)|\mathcal{F}_{T_k}]$, where ${\sigma}_{k}^*=\inf\{T_N\geq T_{k}:Y_{T_N}>  U_{T_N}\}$ and ${\tau}_{k}^*=\inf\{T_N\geq T_{k}:Y_{T_N}< L_{T_N}\,\textrm{or}\,\,Y_{T_N}\geq L_{T_N}>U_{T_N}\}$.
\end{lemma}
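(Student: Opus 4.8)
The plan is to recognise the auxiliary game as a discrete-time zero-sum Dynkin game played over the event times $\{T_n\}_{n\ge k}$, with running payoff $(L_{T_n})$ for the \textsc{sup} player, $(U_{T_n})$ for the \textsc{inf} player, and terminal payoff $0$ on the event that neither player ever stops (legitimate since $T_n\uparrow\infty$ a.s.\ and $M_\infty=0$), and to verify that $\hat Y$ is its value by exhibiting $(\tau^*_k,\sigma^*_k)$ as a saddle point. The first input is the dynamic programming equation, obtained at once from Lemma~\ref{lem:DPE0} after multiplying through by the non-negative factor $e^{-rT_{n+1}}$:
\[
Y_{T_n}=\E[\hat Y_{T_{n+1}}\mid\mathcal F_{T_n}],\qquad\text{hence}\qquad \hat Y_{T_n}=\max\bigl\{L_{T_n},\,\min\{U_{T_n},\,\E[\hat Y_{T_{n+1}}\mid\mathcal F_{T_n}]\}\bigr\}.
\]
The second input is Lemma~\ref{lem:UI}: the envelope $\Phi:=\sup_{t\ge0}(L_t\vee U_t)\vee\sup_{t\ge0}\hat Y_t$ is integrable and $L_t,U_t,Y_t,\hat Y_t\to0$ a.s.\ as $t\to\infty$.

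\emph{Step 1 (optimality of $\sigma^*_k$ for the \textsc{inf} player).} Fix $x\in\sX$ and an arbitrary $\tau\in\mathcal R_k(\lambda)$, and abbreviate $\sigma^*=\sigma^*_k$. Define, for $n\ge k$,
\[
\Pi_n:=L_\tau\1_{\{\tau\le T_n\}\cap\{\tau\le\sigma^*\}}+U_{\sigma^*}\1_{\{\sigma^*\le T_n\}\cap\{\sigma^*<\tau\}}+\hat Y_{T_n}\1_{\{\tau\wedge\sigma^*>T_n\}}.
\]
I would first record that $0\le\Pi_n\le\Phi$, that $\Pi_n$ is constant once $\tau\wedge\sigma^*\le T_n$, and that $\Pi_n\to R(\tau,\sigma^*)$ a.s.\ (checking the cases $\tau\wedge\sigma^*<\infty$, $\{\tau=\infty>\sigma^*\}$, and $\{\tau=\sigma^*=\infty\}$, the last using $\hat Y_{T_n}\to0$). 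The crux is that $(\Pi_n)_{n\ge k}$ is an $(\mathcal F_{T_n})$-supermartingale: the increment is trivially zero off $\{\tau\wedge\sigma^*>T_n\}\in\mathcal F_{T_n}$, while on that event a short case split—according to whether $\tau$ stops at $T_{n+1}$ and whether $Y_{T_{n+1}}\ge U_{T_{n+1}}$ (i.e.\ whether $\sigma^*$ triggers there)—gives $\Pi_{n+1}\1_{\{\tau\wedge\sigma^*>T_n\}}\le\hat Y_{T_{n+1}}\1_{\{\tau\wedge\sigma^*>T_n\}}$; taking $\E[\,\cdot\mid\mathcal F_{T_n}]$ and using $\E[\hat Y_{T_{n+1}}\mid\mathcal F_{T_n}]=Y_{T_n}\le\hat Y_{T_n}$ (valid on $\{\sigma^*>T_n\}$) closes it. Since also $\Pi_k\le\hat Y_{T_k}$ (check the sub-cases $\tau=T_k$, $\sigma^*=T_k<\tau$, and $\tau\wedge\sigma^*>T_k$), conditional dominated convergence with dominating function $\Phi$ gives $\E[R(\tau,\sigma^*)\mid\mathcal F_{T_k}]=\lim_n\E[\Pi_n\mid\mathcal F_{T_k}]\le\Pi_k\le\hat Y_{T_k}$, hence $\esssup_{\tau\in\mathcal R_k(\lambda)}\E[R(\tau,\sigma^*)\mid\mathcal F_{T_k}]\le\hat Y_{T_k}$, so $\overline{\nu}_{T_k}\le\hat Y_{T_k}$.

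\emph{Step 2 (optimality of $\tau^*_k$ for the \textsc{sup} player) and conclusion.} This is the mirror image: fix $\sigma\in\mathcal R_k(\lambda)$, put $\tau^*=\tau^*_k$, define $\Pi'_n$ by the same formula with $(\tau,\sigma^*)$ replaced by $(\tau^*,\sigma)$, and show that $(\Pi'_n)$ is a \emph{sub}martingale with $\Pi'_k\ge\hat Y_{T_k}$ and $\Pi'_n\to R(\tau^*,\sigma)$, whence $\E[R(\tau^*,\sigma)\mid\mathcal F_{T_k}]\ge\hat Y_{T_k}$ and $\underline{\nu}_{T_k}\ge\hat Y_{T_k}$. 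Combined with Step~1 and the trivial $\underline{\nu}_{T_k}\le\overline{\nu}_{T_k}$ this yields $\overline{\nu}_{T_k}=\underline{\nu}_{T_k}=\hat Y_{T_k}$; specialising the two one-sided estimates to $\tau=\tau^*_k$, $\sigma=\sigma^*_k$ gives $\hat Y_{T_k}\le\E[R(\tau^*_k,\sigma^*_k)\mid\mathcal F_{T_k}]\le\hat Y_{T_k}$, so $(\tau^*_k,\sigma^*_k)$ is a saddle point.

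The step I expect to fight with is the submartingale estimate in Step~2: it is there that the precise (strict vs.\ non-strict) inequalities defining $\tau^*_k$ and $\sigma^*_k$, together with the convention that the \textsc{sup} player's payoff takes precedence under a tie, must line up exactly so that $\hat Y$ behaves as the value on the continuation event (the corresponding sensitivity in Step~1 is milder). By contrast, the infinite-horizon features—interchanging limit and conditional expectation, and identifying $\lim_n\Pi_n$ on $\{\tau\wedge\sigma^*=\infty\}$ with the terminal value $0$—are routine once Lemma~\ref{lem:UI} is in hand, since the whole family $(\Pi_n)_n$ is pinned between $0$ and the integrable envelope $\Phi$.
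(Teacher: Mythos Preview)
Your approach is essentially the same as the paper's. The paper works with the stopped process $(\hat Y_{T_n\wedge\tau\wedge\sigma^*_k})_{n\ge k}$ (respectively $(\hat Y_{T_n\wedge\tau^*_k\wedge\sigma})_{n\ge k}$), shows it is a uniformly integrable super- (respectively sub-) martingale using exactly the inequalities $Y_{T_n}\le\hat Y_{T_n}$ on $\{\sigma^*_k>T_n\}$ and $Y_{T_n}\ge\hat Y_{T_n}$ on $\{\tau^*_k>T_n\}$ together with the recursion $Y_{T_n}=\E[\hat Y_{T_{n+1}}\mid\mathcal F_{T_n}]$, applies optional stopping, and then compares $\hat Y_{\tau\wedge\sigma^*_k}$ with $R(\tau,\sigma^*_k)$ at the terminal time. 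Your processes $\Pi_n,\Pi'_n$ simply bake this last comparison into the running process, so that the limit is directly $R$; the martingale-type estimates and the case analyses you outline are identical in content to the paper's, and your caveat about the delicacy of the strict/non-strict inequalities in Step~2 is precisely where the paper's own case split sits.
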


\begin{proof}
    We will later establish that:

    (1)$(\hat{Y}_{T_n\wedge\tau\wedge {\sigma}^*_k})_{n\geq k}$ is a {uniformly integrable} supermartingale for any $\tau\in \mathcal{R}_k(\lambda)$.

    (2)$(\hat{Y}_{T_n\wedge{\tau}_k^* \wedge \sigma})_{n\geq k}$ is a uniformly integrable submartingale for any $\sigma \in \mathcal{R}_k(\lambda)$.
Before proving these properties, we will first show that conditions (1) and (2) are sufficient for the desired result. Note that, by Lemma \ref{lem:UI}, $R(\tau,\sigma)=\hat{Y}_{\sigma\wedge \tau}=0$ on the set $\{\tau=\sigma=\infty\}$. 

Observe that, on the set $\{\tau\leq \sigma_k^*\}$, $\hat{Y}_\tau\geq L_\tau$ holds by the definition of $\hat{Y}_\tau=\max\{\min\{U_{\tau}, Y_{\tau}\},L_{\tau}\}$. Also, we have \bDGH{$Y_{\sigma^*_k}> U_{\sigma^*_k}$} on the set $\{\sigma_k^*<\tau\}$ by the definition of $\sigma^*_k$, thus $\hat{Y}_{\sigma^*_k}=\max\{\min\{U_{\sigma^*_k}, Y_{\sigma^*_k}\},L_{\sigma^*_k}\}\geq U_{\sigma^*_k}$ holds on this set. Hence, using the property that $(\hat{Y}_{T_n\wedge\tau\wedge {\sigma}^*_k})_{n\geq k}$ is a {uniformly integrable} supermartingale, the optional stopping theorem implies:  
\begin{eqnarray}\label{eq:taustar}
    \hat{Y}_{T_{k}}&\geq& \E[\hat{Y}_{\tau\wedge{\sigma}^*_{k}}|\mathcal{F}_{T_{k}}]\nonumber\\
    &\geq& \E[L_\tau\1_{\tau\leq \sigma^*_{k}}+U_{\sigma^*_{k}}\1_{\sigma^*_{k}<\tau}|\mathcal{F}_{T_{k}}]\nonumber\\
    &=&\E[R(\tau,\sigma^*_{k})|\mathcal{F}_{T_{k}}].
\end{eqnarray}

Next, we have both $Y_\sigma\geq L_\sigma$ and $U_\sigma\geq L_\sigma$ on the set $\{\sigma<\tau^*_k\}$, hence $\hat{Y}_\sigma\leq U_\sigma$ holds by the identity $\hat{Y}_{\sigma}=\max\{\min\{U_{\sigma}, Y_{\sigma}\},L_{\sigma}\}$. We also have $\hat{Y}_{\tau^*_k}\leq L_{\tau^*_k}$ on the set $\{\tau^*_k\leq \sigma\}$. Indeed, by the definition of $\tau^*_k$, there are three possible cases about the order among $Y_{\tau^*_k}, L_{\tau^*_k}$ and $ U_{\tau^*_k}$: $Y_{\tau^*_k}< L_{\tau^*_k}\leq U_{\tau^*_k}$, $Y_{\tau^*_k}< L_{\tau^*_k} \textrm{ with } U_{\tau^*_k}<L_{\tau^*_k}$, and $Y_{\tau^*_k}\geq L_{\tau^*_k}>U_{\tau^*_k}$
. In any of these three cases, $\min\{U_{\tau^*_k}, Y_{\tau^*_k}\}\leq L_{\tau^*_k}$ holds, hence $\hat{Y}_{\tau^*_k}\leq L_{\tau^*_k}$ holds by the identity $\hat{Y}_{\tau^*_k}=\max\{\min\{U_{\tau^*_k}, Y_{\tau^*_k}\},L_{\tau^*_k}\}$.
Hence, using the property that $(\hat{Y}_{T_n\wedge{\tau}_k^* \wedge \sigma})_{n\geq k}$ is a uniformly integrable submartingale, the optional stopping theorem implies:
\begin{eqnarray}\label{eq:sigmastar}
    \hat{Y}_{T_{k}}&\leq& \E[\hat{Y}_{\tau^*_k\wedge{\sigma}}|\mathcal{F}_{T_{k}}]\nonumber\\
    &\leq& \E[L_{\tau^*_k}\1_{\tau^*_k\leq \sigma}+U_{\sigma}\1_{\sigma<\tau^*_{k}}|\mathcal{F}_{T_{k}}]\nonumber\\
    &=&\E[R(\tau^*_{k},\sigma)|\mathcal{F}_{T_{k}}].
\end{eqnarray}
The inequality (\ref{eq:taustar}) implies that $\hat{Y}_{T_k}\geq \esssup_{\tau\in \mathcal{R}_k(\lambda)}\E[R(\tau,\sigma^*_k)|\mathcal{F}_{T_{k}}]\geq \overline{\nu}_{T_k}$. Similarly, we have $\hat{Y}_{T_k}\leq \underline{\nu}_{T_k}$ by (\ref{eq:sigmastar}). It is obvious that $\overline{\nu}_{T_k}\geq \underline{\nu}_{T_k}$, therefore the equality $\hat{Y}_{T_k}=\overline{\nu}_{T_k}=\underline{\nu}_{T_k}$ holds and $\hat{Y}_{T_k}=\E[R(\tau^*_k,\sigma^*_k)|\mathcal{F}_{T_k}]$ is immediate.


It remains to prove properties (1) and (2). Uniform integrability of $\hat{Y}$ follows by Lemma \ref{lem:UI}. To prove the supermartingale property, we fix $n\geq k$. We have:
\[\E[\hat{Y}_{T_{n+1}\wedge\tau\wedge \sigma^*_k}|\mathcal{F}_{T_n}]=\1_{\tau\wedge \sigma^*_k\leq T_n}\hat{Y}_{T_n\wedge \tau \wedge \sigma^*_{k}}+\1_{\tau\wedge \sigma^*_k\geq T_{n+1}}\E[\hat{Y}_{T_{n+1}}|\mathcal{F}_{T_{n}}].\]
Observe that, on the set $\{\tau\wedge \sigma^*_k\geq T_{n+1}\}$, we have $T_n<\sigma^*_k$. Therefore \bDGH{$U_{T_n}{\geq } {Y}_{T_n}$} holds on this set, hence $\hat{Y}_{T_n}=\max\{Y_{T_n},L_{T_n}\}\geq Y_{T_n}=\E[\hat{Y}_{T_{n+1}}|\mathcal{F}_{T_n}]$ holds on this set, and this implies the supermartingale property.

For the submartingale property, we have $T_n<\tau^*_k$ on the set $\{\tau^*_k\wedge \sigma\geq T_{n+1}\}$, which implies that $\min\{U_{T_n},Y_{T_n}\}\geq L_{T_n}$. It follows that $\hat{Y}_{T_n}\leq Y_{T_n}$ holds on this set, hence the submartingale property follows.
\end{proof}

\begin{theorem}\label{thm:common}
    Let $(\mathbf{y},Z)$ be the unique solution of BSDE (\ref{eq:BSDE}), and let $Y_{t}$ be as defined in Lemma \ref{lem:UI}.

    Then $Y_{T_k}=\overline{{v}}_{T_k}=\underline{{v}}_{T_k}$, where $\overline{{v}}_{T_k}$ and $\underline{{v}}_{T_k}$ are as defined in (\ref{eq:dynamicpayoffo1}) and (\ref{eq:dynamicpayoffo2}), respectively. Moreover, we have $Y_{T_k}=\E[R(\tau_{k+1}^*,\sigma_{k+1}^*)|\mathcal{F}_{T_k}]$, where $\tau_{k+1}^*,\sigma_{k+1}^*$ are as defined in Lemma \ref{lem:auxiliaryvalue}.

    In particular, $Y_0=\mathbf{y}_0$ is the value of the Dynkin game with payoff $R$ under the common Poisson constraint, and $(\tau^*_1,\sigma^*_1)$ is a saddle point of this game. 
\end{theorem}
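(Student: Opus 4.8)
The plan is to establish the equality $Y_{T_k} = \overline{v}_{T_k} = \underline{v}_{T_k}$ by connecting the original game to the auxiliary game of Lemma~\ref{lem:auxiliaryvalue}, exploiting the fact that in the original game the first admissible stopping opportunity is $T_{k+1}$ rather than $T_k$. First I would observe that, by the recursive equation~\eqref{eq:DPE0} of Lemma~\ref{lem:DPE0} together with the definitions $Y_t = e^{-rt}\mathbf{y}_t$ and $\hat{Y}_t = \max\{\min\{U_t,Y_t\},L_t\}$, we have the key identity
\[
Y_{T_k} = e^{-rT_k}\mathbf{y}_{T_k} = \E\left[e^{-rT_{k+1}}\max\{l(X_{T_{k+1}}),\min(\mathbf{y}_{T_{k+1}},u(X_{T_{k+1}}))\}\,\middle|\,\mathcal{F}_{T_k}\right] = \E\left[\hat{Y}_{T_{k+1}}\,\middle|\,\mathcal{F}_{T_k}\right].
\]
This says precisely that the value of the original game at $T_k$ is the one-step conditional expectation of the auxiliary game value evaluated at the next Poisson time.

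Next I would argue the two inequalities. For $\overline{v}_{T_k} \le Y_{T_k}$: pick the candidate strategy $\sigma^*_{k+1}$ for the \textsc{inf} player (as in Lemma~\ref{lem:auxiliaryvalue}, but with index $k+1$), which lies in $\mathcal{R}_{k+1}(\lambda)$. For any $\tau \in \mathcal{R}_{k+1}(\lambda)$, both $\tau$ and $\sigma^*_{k+1}$ are in $\mathcal{R}_{k+1}(\lambda) \subseteq \mathcal{R}_{k+1}(\lambda)$, so by the supermartingale argument already run in the proof of Lemma~\ref{lem:auxiliaryvalue} (applied from $T_{k+1}$ onwards, i.e. with $k$ replaced by $k+1$) we get $\hat{Y}_{T_{k+1}} \ge \E[R(\tau,\sigma^*_{k+1})|\mathcal{F}_{T_{k+1}}]$; taking $\E[\cdot|\mathcal{F}_{T_k}]$ and using the identity above yields $Y_{T_k} \ge \E[R(\tau,\sigma^*_{k+1})|\mathcal{F}_{T_k}]$, and taking the essential supremum over $\tau \in \mathcal{R}_{k+1}(\lambda)$ then the essential infimum structure gives $\overline{v}_{T_k} \le Y_{T_k}$. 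The reverse inequality $\underline{v}_{T_k} \ge Y_{T_k}$ is symmetric, using the candidate $\tau^*_{k+1}$ and the submartingale property from Lemma~\ref{lem:auxiliaryvalue}. Since trivially $\underline{v}_{T_k} \le \overline{v}_{T_k}$, all three quantities coincide, and the saddle point identity $Y_{T_k} = \E[R(\tau^*_{k+1},\sigma^*_{k+1})|\mathcal{F}_{T_k}]$ drops out by combining the two chains of inequalities (each becomes an equality at $(\tau^*_{k+1},\sigma^*_{k+1})$).

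Finally, specialising to $k=0$ gives $Y_0 = \mathbf{y}_0$ as the game value and $(\tau^*_1,\sigma^*_1)$ as a saddle point; here one should note $T_0 = 0$ but the original game only allows stopping in $\mathcal{R}_1(\lambda) = \mathcal{R}(\lambda)$, which is exactly why the index shifts from $k$ to $k+1$ and why the auxiliary game (which does allow stopping at $T_k$) is the right comparison object. The main obstacle I anticipate is bookkeeping rather than substance: one must be careful that the uniform integrability needed to apply optional stopping at the (possibly infinite) stopping times $\tau, \sigma$ is in place — this is supplied by Lemma~\ref{lem:UI}, which guarantees $\E[\sup_t \hat{Y}_t] < \infty$ and $\hat{Y}_t \to 0$ a.s., so that $\hat{Y}_\infty = 0$ is consistent with $M_\infty = 0$ and the terminal contributions on $\{\tau = \sigma = \infty\}$ vanish correctly. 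A secondary point of care is that the essential-supremum/infimum manipulations respect the conditioning on $\mathcal{F}_{T_k}$ versus $\mathcal{F}_{T_{k+1}}$, which is handled cleanly by the tower property once the one-step identity above is in hand.
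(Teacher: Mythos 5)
Your proposal is correct and follows essentially the same route as the paper: the one-step identity $Y_{T_k}=\E[\hat{Y}_{T_{k+1}}|\mathcal{F}_{T_k}]$ from Lemma~\ref{lem:DPE0}, combined with the inequalities \eqref{eq:taustar} and \eqref{eq:sigmastar} of Lemma~\ref{lem:auxiliaryvalue} applied at index $k+1$ and the tower property, is exactly the paper's argument. The uniform-integrability and $\hat{Y}_\infty=0$ bookkeeping you flag is indeed supplied by Lemma~\ref{lem:UI}, as in the paper.
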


\begin{proof}
    Observe that, for any $\tau \in \mathcal{R}_{k+1}(\lambda)$,
\begin{equation*}
    Y_{T_k}=\E[\hat{Y}_{T_{k+1}}|\mathcal{F}_{T_k}]\geq\E[\E[R(\tau,\sigma^*_{k+1})|\mathcal{F}_{T_{k+1}}]|\mathcal{F}_{T_{k}}]]
    =\E[R(\tau,\sigma^*_{k+1})|\mathcal{F}_{T_k}],
\end{equation*}
where the first equality is by (\ref{eq:DPE0}) and the inequality is by (\ref{eq:taustar}). Similarly, {by (\ref{eq:sigmastar}),} $Y_{T_k}\leq \E[R({\tau}^*_{k+1},\sigma)|\mathcal{F}_{T_k}]$ holds for any $\sigma \in \mathcal{R}_{k+1}(\lambda)$. Hence, $Y_{T_k}=\overline{{v}}_{T_k}=\underline{{v}}_{T_k}=\E[R(\tau_{k+1}^*,\sigma_{k+1}^*)|\mathcal{F}_{T_k}]$.

By taking $k=0$, we get $Y_{T_0}=\mathbf{y}_0$ and $J^x(\tau,\sigma^*_1)\leq \mathbf{y}_0\leq J^x(\tau^*_1,\sigma)$ for any $\tau,\sigma \in\mathcal{R}_1(\lambda)$. Hence $\mathbf{y}_0$ is the value and $(\tau^*_1,\sigma^*_1)$ is a saddle point of the game under the common constraint.
\end{proof}

\begin{theorem}
\label{thm:satisfyassC}
Suppose that $X$, $l$ and $u$ are such that Assumption~\ref{assumption:BSDE} holds for each $x \in \sX$ and suppose that $M_\infty=0$.
Then Assumptions~\ref{assumption:existence}, \ref{assumption:technical} \bDGH{and \ref{assumption:gamevalue}(i)} are satisfied.
\end{theorem}

\begin{proof}
Given that the driver does not directly depend on $t$ the solution to the infinite horizon BSDE (\ref{eq:BSDE}) has a Markovian representation, which does not depend on $t$. This defines a function $v^C$ on $\sX$ such that $\mathbf{y}_t^x=v^C(X_t^x)$, where $\mathbf{y}^x$ denotes the solution of the system (\ref{eq:BSDE}) for initial value $X_0=x$.

By Theorem \ref{thm:common} and the Markovian representation $v^C$, conditioning on any initial value $x$, $v^C(x)$ is the value of the Dynkin game with payoff $R$ under the common constraint, and the saddle point $(\tau_1^*,\sigma_1^*)$ defined in Theorem \ref{thm:common} has the form ${\sigma}_{1}^*=\inf\{T_N\geq T_{1}:v^C(X_{T_N})>  u(X_{T_N})\}$ and ${\tau}_{1}^*=\inf\{T_N> T_{1}:v^C(X_{T_N})< l(X_{T_N})\,\textrm{or}\,v^C(X_{T_N})\geq l(X_{T_N})>u(X_{T_N})\}$. Hence Assumption \ref{assumption:existence} is satisfied.

Since $M_\infty=0$, Assumption~\ref{assumption:technical} is trivially satisfied. \bDGH{Assumption \ref{assumption:gamevalue}(i) follows by Lemma \ref{lem:DPE0}}.
\end{proof}

It follows from the results of this section that there are a wide class of examples such that Assumptions
\ref{assumption:existence} and \ref{assumption:technical} hold. It should be noted however, that the class of examples for which  Assumptions
\ref{assumption:existence} and \ref{assumption:technical} hold is much wider than the class covered by Assumption~\ref{assumption:BSDE}. Example~\ref{eg:extension} provides an example in this wider class.

\subsection{Zero-sum Dynkin games under the independent constraint}
\label{ssec:existenceI}

The results for the game for the independent constraint are very similar. We summarize the most relevant conclusions here.

\begin{theorem}
\label{thm:satisfyassi}
Suppose that $X$, $l$ and $u$ are such that Assumption~\ref{assumption:BSDE} holds for each $x \in \sX$ and suppose that $M_\infty=0$.
Then Assumptions~\ref{assumption:existence1}, \ref{assumption:technical1} and \ref{assumption:gamevalue}(ii) are satisfied.
\end{theorem}

\begin{proof}
The proof follows the proof of all the lemmas and propositions of Section~\ref{ssec:existenceC}, leading ultimately to the analogous conclusion to Theorem~\ref{thm:satisfyassi}, namely this theorem. There are two main changes: firstly, rather than considering the BSDE \eqref{eq:BSDE} we consider the infinite horizon BSDE
\[ 
    d\mathbf{y}_t=-\left[\lambda \max\{l(X_t),\mathbf{y}_t\} + \lambda \min \{ \mathbf{y}_t,u(X_t)\} -(2\lambda+r)\mathbf{y}_t\right]\,dt+Z_t\,dW_t,\quad t\geq 0,
\] 
The associated driver is $\tilde{f}(t,y) = \lambda \max(l(X_t), y) + \min(y,u(X_t)) - (2 \lambda + r)y$. Since this is still Lipschitz the proofs of the corresponding results pass through unchanged and the BSDE has a unique solution in the spaces $ \mathcal{S}^2_{\alpha,\infty}\times\mathcal{H}^2_{\alpha,\infty}$. 

Secondly, in the verification of the solution to the auxiliary games (which corresponds to Lemma \ref{lem:auxiliaryvalue} under the common constraint), we need to merge the two Poisson sequences and this defines an increasing sequence $(T_k^{mer})_{k\geq 1}$ (See \cite[Section 3]{liang2020risk} for details).  As a result we define $\hat{Y}_{T^{mer}_k}=\max\{L_{T^{mer}_k},Y_{T^{mer}_k}\}\1_{T^{mer}_k\in T^{M}}+\min\{U_{T^{mer}_k},Y_{T^{mer}_k}\}\1_{T^{mer}_k\in T^{m}}$, where $T^{mer}_k\in T^{M}$ denotes the event that the signal $T^{mer}_k$ is the \textsc{sup} player's opportunity. {\em Mutatis mutandis} the remainder of the argument is the same.
\end{proof}

\section*{Acknowledgment}
We are grateful to the editor and the three referees whose
constructive comments led to a substantial
enhancement of the paper.



\appendix

\section{Alternative conventions on the choice of $M$}
\label{app:m=u}

\subsection{The case $M=U$}

In the current and the next subsection we consider an extension of our main example. Recall that \bDGH{in the main part of the paper} we assumed that $L_t = e^{-r t} l(X_t)$ and $U_t = e^{-r t} u(X_t)$, and the payoff when the two players stop simultaneously is $M=L$. 
Throughout this section we assume that $M=U$. We further suppose that $l$ is lower semicontinuous and $u$ is upper semicontinuous.
\begin{theorem}
Suppose that the Dynkin game under the common Poisson constraint satisfies Assumption \ref{assumption:technical} and has a value function $\{ v^C(x) \}_{x \in \sX}$, with saddle point $(\eta^C_{A^C},\eta^C_{B^C})$, where 
     $A^C=\{x:v^C(x)< l(x) \}$, $B^C=\{x:v^C(x)> u(x)\}\cup\{x:l(x)> u(x)\geq v^C(x)\}$.
Suppose that $v^C$ is continuous.

Then the following are equivalent.
\begin{enumerate}
    \item $v^C \vee u \geq l$.
    \item $A^C \cap B^C = \emptyset$.
    \item $J^x(\eta^{M}_{A^C},\eta_{B^C}^{m})=J^x(\eta^C_{A^C},\eta^C_{B^C})$ for every $x\in \sX$.   
\end{enumerate}
Moreover, if any of the above conditions hold, then the Dynkin game under the independent Poisson constraint has the same solution as the game under the common Poisson constraint. That is, $v^I(x)=v^C(x)$ for every $x\in \sX$, and $(\eta^{M}_{A^C},\eta^{m}_{B^C})$ is a saddle point of the game under the independent Poisson constraint. 
\end{theorem}





\begin{theorem}
Suppose that the Dynkin game under the independent Poisson constraint satisfies Assumption \ref{assumption:existence1} and Assumption \ref{assumption:technical1}  and let the solution be given by $(v^I, A^I, B^I)$ where 
$A^I=\{x:v^I(x)< l(x) \}$, $B^I=\{x:v^I(x)> u(x)\}$.
Suppose that $v^I$ is continuous.

Then the following are equivalent.
\begin{enumerate}
    \item  $v^I \vee u \geq l$.
    \item $A^I \cap B^I = \emptyset$ and $\{x: l(x)>u(x)\geq v^I(x)\}=\emptyset$.
    \item $J^x(\eta^{C}_{A^I},\eta_{B^I}^{C})=J^x(\eta^M_{A^I},\eta^m_{B^I})$ for every $x\in \sX$.
\end{enumerate}
Moreover, if any of the above conditions hold, then the Dynkin game under the common Poisson constraint has the same solution as the game under the independent Poisson constraint. That is, $v^C(x)=v^I(x)$ for every $x\in \sX$, and $(\eta^{C}_{A^I},\eta^{C}_{B^I})$ is a saddle point of the game under the common Poisson constraint.    
\end{theorem}

\begin{proposition}
Suppose that the value $v^C(x)$ of the game under the common Poisson constraint exists for every $x\in \sX$ and satisfies:
    \[v^C(x)=\E^x[e^{-rT_1^C}\min\{u(X_{T_{1}^C}),\max(v^C(X_{T_{1}^C}),l(X_{T_{1}^C}))\}].\]
Suppose that the value $v^I(x)$ of the game under the independent Poisson constraint satisfies Assumption \ref{assumption:gamevalue} (ii).

Suppose that $v^I=v^C$ and $v$ is continuous where $v=v^I \equiv v^C$. Then $v\vee u \geq l$.

\end{proposition}

\subsection{Other choices for the value of $M$}

In this subsection we consider \bDGH{a more general class of} functions $m$ \bDGH{for which the analysis behind Theorem~\ref{thm:necessity} can still be applied.}

Suppose that $l$ is lower semi-continuous and $u$ is upper semi-continuous and that $l$ and $u$ are given and fixed. Then $E:= \{ x: l(x)>u(x) \}$ can be written as a countable union of open intervals $E = \cup_{k \geq 1} E_k$.
\begin{definition}
Let ${\mathcal M}^0 = \{ \mbox{$m$: for each $k \geq 1$ either $m=l$ on $E_k$ or $m=u$ on $E_k$} \}$, and let ${\mathcal M} = \{ m \in {\mathcal M}^0: l \wedge u \leq m \leq l \vee u \}$. 
\end{definition}

If $m=l$ everywhere, or $m=u$ everywhere then clearly $m \in {\mathcal M}$. Note also that on the set $\{x : l(x)\leq u(x) \}$ the value of $m$ 
is only constrained by the order condition $l \leq m \leq u$.

The proof of the following theorem is similar to the proof of Theorem~\ref{thm:necessity}.

\begin{theorem}\label{thm:mixedmain}     Suppose that $l$ is lower semicontinuous and $u$ is upper semicontinuous and \bDGH{that $m \in {\mathcal M}$}.

Suppose that the Dynkin game under the common Poisson constraint satisfies Assumption \ref{assumption:technical} and has a value function $\{ v^C(x) \}_{x \in \sX}$, with saddle point $(\eta^C_{A^C},\eta^C_{B^C})$, where 
     $A^C=\{x:v^C(x)< l(x) \}\cup\{x:v^C(x) \geq l(x)> u(x),m(x)=l(x)\}$, $B^C=\{x:v^C(x)> u(x)\}\cup\{x:l(x)> u(x)\geq v^C(x),m(x)=u(x)\}$.
Suppose that $v^C$ is continuous.

Then the following are equivalent.
\begin{enumerate}
    \item $v^C \vee u \geq l$ on the set $\{x:m(x)=u(x)<l(x)\}$, and $v^C\wedge l\leq u$ on the set $\{x:m(x)=l(x)>u(x)\}$.
    \item $A^C \cap B^C = \emptyset$. 
    \item $J^x(\eta^{M}_{A^C},\eta_{B^C}^{m})=J^x(\eta^C_{A^C},\eta^C_{B^C})$ for every $x\in \sX$.
\end{enumerate}
Moreover, if any of the above conditions hold, then the Dynkin game under the independent Poisson constraint has the same solution as the game under the common Poisson constraint. That is, $v^I(x)=v^C(x)$ for every $x\in \sX$, and $(\eta^{M}_{A^C},\eta^{m}_{B^C})$ is a saddle point of the game under the independent Poisson constraint.
\end{theorem}










    





\section{Proofs}

[Proof of Lemma~\ref{lem:comp1}]

We prove that $(\tau\wedge \eta_E^{M},X_{\tau\wedge \eta_E^{M}})$ and $(\tau\wedge\eta_E^{m},X_{\tau\wedge\eta_E^{m}})$ have the same distribution using coupling of marked Poisson processes. The proof of the second result follows in an identical fashion.

We expand the probability space $(\Omega, \mathcal{F}, \mathbb{F}, \mathbb{P})$ if necessary so that it supports a marked point process $N$ taking values in $\mathbb{R}_+ \times Y$ with rate $dt \times  \mu(dy)$, and such that $X$ and $N$ are independent. Here $\mu$ is a measure on $Y$.

If $Y$ is a singleton $Y= \{ y \}$ then $\mu$ is a point mass $\mu = \mu_Y \delta_y$ where $\mu_Y=\mu(\{y\}) \in (0,\infty)$ and we may identify events of the marked point process with events of a standard Poisson process with rate $\mu_Y$.

We are interested in the case where $Y$ is a two-point set, $Y= \{R,B\}$ where we will refer to a mark with label $R$ (respectively $B$) as a red (respectively blue) mark. Suppose $\mu$ is such that $\mu(\{R\})= \mu_R$ and $\mu(\{B\}) = \mu_B$. By the thinking of the previous paragraph, we can identify the event times of the marked point process where we consider all the marks as a standard Poisson process with rate $\mu_R + \mu_B$ and event times $(T^{\mu_R + \mu_B}_k)_{k \geq 1}$. If we only consider the Poisson process generated by the red (respectively blue) marks then we have a Poisson process of rate $\mu_R$ ($\mu_B$) with event times $(T^{\mu_R}_k)_{k \geq 1}$ ($(T^{\mu_B}_k)_{k \geq 1}$).

Using this coupling it is clear that for each $\omega\in \Omega$ we have $\{ T^{\mu_R}_k(\omega) \}_{k \geq 1} \subseteq \{ T^{\mu_R+ \mu_B}_k(\omega) \}_{k \geq 1}$ and
$\{ T^{\mu_R}_k(\omega) \}_{k \geq 1} \cup \{ T^{\mu_B}_k(\omega) \}_{k \geq 1} = \{ T^{\mu_R+ \mu_B}_k(\omega) \}_{k \geq 1}$. Moreover, the set on the left of the last equation is a disjoint union (almost surely).

Take $\mu_R=\mu_B=\lambda$. We use the red marks to denote the Poisson signals of the \textsc{sup} player under the independent constraint, so that their stopping time $\tau$ can be defined using red marks. Note that, given that $\mathbb{P}^x(\mbox{$\tau=\infty$ or $X_\tau\in D$})=1$, only those red marks at which $X$ is in $D$ are used for the modelling of $\tau$.

Consider the following two approaches:

    (1) Thin out all the blue marks from the marked Poisson process, then find the first event time of the remaining marked process such that $X$ is in the set $E$; call this time $\eta^{(1)}$. 

    (2) Thin out all the blue marks at which the process $X$ is in the set $D$, and all the red marks at which $X$ is not in $D$, then find the first event time of the remaining Poisson process such that $X$ is in the set $E$; call this time $\eta^{(2)}$. 

    In either approach above we never thinned any red marks at which $X$ is in $D$, and so the definition of $\tau$  is not affected. It is immediate that the event time $\tau \wedge \eta^{(1)}$ defined via the first approach has the same distribution as $\tau\wedge \eta_E^{M}$, and the event time $\tau \wedge \eta^{(2)}$ defined via the second approach has the same distribution as $\tau\wedge \eta_E^{m}$.  But the two approaches are equivalent in the sense that in each approach half of the marks are thinned from a Poisson process with intensity $2\lambda$, and the first arrival time of $E$ is defined using the remaining process up to time $\tau$.

    Hence $(\tau\wedge \eta_E^{M},X_{\tau\wedge \eta_E^{M}})$ and $(\tau\wedge\eta_E^{m},X_{\tau\wedge\eta_E^{m}})$ have the same distribution. 

For the expected payoff, observe that we can write $R(\tau, \eta_E^{m})$ as
\begin{eqnarray*}
R(\tau, \eta_E^{m}) & = & e^{-r(\tau\wedge \eta_E^{m})}(l(X_{\tau\wedge \eta_E^{m}})\1_{X_{\tau\wedge \eta_E^{m}}\in D}  \\
&& \hspace{5mm} +u(X_{\tau\wedge \eta_E^{m}})\1_{X_{\tau \wedge \eta_E^{m}}\in E})\1_{\tau\wedge \eta_E^{m}<\infty}+M_\infty\1_{\tau\wedge \eta_E^{m}=\infty},
\end{eqnarray*}
and we can write $R(\tau, \eta_E^{M})$ in the exact same form on replacing $\tau\wedge \eta_E^{m}$ by $\tau\wedge \eta_E^{M}$. Therefore $J^x(\tau,\eta_E^{m})=J^x(\tau,\eta_E^{M})$ holds given that $(\tau\wedge \eta_E^{M},X_{\tau\wedge \eta_E^{M}})$ and $(\tau\wedge\eta_E^{m},X_{\tau\wedge\eta_E^{m}})$ have the same distribution, and that $M_\infty$ is independent of the Poisson signals. 
$\square$

[Proof of Lemma \ref{lem:SMPtau}]

We prove the first equality. The second equality can be proved via the same approach. Recall that $A=A^C$ and $B=B^C$. Let $\theta_t$ be the shift operator on the canonical space and let $\theta_\tau(\omega)=\theta_{\tau(\omega)}(\omega)$.


By Corollary \ref{cor:commonequality} we have $v^C(X^x_\tau)=J^{X^x_\tau}(\eta_A^C,\eta_B^C)=J^{X^x_\tau}(\eta_{A}^{M},\eta_B^{m})$. Hence, by Corollary \ref{cor:assumption} and the strong Markov property:
{\footnotesize{
\begin{eqnarray*}
    \lefteqn{e^{-r\tau}v^C(X^x_\tau)\1_{\{\tau<\eta_B^{m}\}\cap\{\tau< \gamma_{\tau,A}^{M}\}}}\\&=&e^{-r\tau}\E^{X_\tau^x}[R(\eta_A^{M},\eta_B^{m})\1_{\eta_A^{M}\wedge \eta_B^{m}<\infty}+M_\infty \1_{\eta_A^{M}\wedge \eta_B^{m}=\infty}]\1_{\{\tau<\eta_B^{m}\}\cap\{\tau< \gamma_{\tau,A}^{M}\}}\\
    &=&(
    e^{-r\tau}\E^{x}[(R(\eta_A^{M},\eta_B^{m})\1_{\eta_A^{M}\wedge \eta_B^{m}<\infty})\circ \theta_\tau |\mathcal{F}_\tau]\\
    &&\hspace{43mm}+\E^x[M_\infty\1_{\eta_A^{M}\wedge \eta_B^{m}=\infty}|\mathcal{F}_\tau])\1_{\{\tau<\eta_B^{m}\}\cap\{\tau< \gamma_{\tau,A}^{M}\}}\\
    &=& \big(e^{-r\tau}\E^x[e^{-r(\gamma_{\tau,A}^{M}\wedge \eta_B^{m}-\tau)}(l(X_{\gamma_{\tau,A}^{M}})\1_{\gamma_{\tau,A}^{M}<\eta_B^{m}}+u(X_{\eta_B^{m}})\1_{ \eta_B^{m}<\gamma_{\tau,A}^{M}})\1_{\eta_A^{M}\wedge \eta_B^{m}<\infty}|\mathcal{F}_\tau]
    \\&&\hspace{43mm}+\E^x[M_\infty\1_{\eta_A^{M}\wedge \eta_B^{m}=\infty})|\mathcal{F}_\tau]\big)\1_{\{\tau<\eta_B^{m}\}\cap\{\tau< \gamma_{\tau,A}^{M}\}}\\
    &=& \E^x[R(\gamma^{M}_{\tau,A},\eta_B^{m})|\mathcal{F}_\tau]\1_{\{\tau<\eta_B^{m}\}\cap\{\tau< \gamma_{\tau,A}^{M}\}}. 
\end{eqnarray*}
}}$\square$

[Proof of Lemma~\ref{lem:commonverification}]

Recall that the functions $l$ and $u$ are both bounded. Hence, for any initial value $x$, $l(X),u(X)$ satisfy Assumption \ref{assumption:BSDE} for any fixed $\alpha\in(-\frac{\theta^2}{2},0)$, with $L_\infty=U_\infty=0$. 

By Remark \ref{rem:ItoTanaka} and the property that $V$ is a strong solution of \eqref{HJB1}, $(V(X),V'(X))$ satisfies BSDE (\ref{eq:BSDE}). By Lemma \ref{lem:propertyV} the candidate value function $V$ is bounded, hence the asymptotic condition \eqref{boundary_cond} holds. Therefore $(V(X),V'(X))$ is a solution to the BSDE (\ref{eq:BSDE}). By Theorem \ref{thm:satisfyassC}, $V$ is the value of the game under the common Poisson constraint and $(\eta^C_{(-1,1)}, \eta^C_{(-x^*,-1)\cup(1,x^*)})$ is a saddle point.
$\square$

[Proof of Proposition \ref{prop:existence}]

    We start by considering the following finite horizon BSDE on $[0,k]$ for any $k\geq 0$:
{\footnotesize{
\begin{equation}\label{eq:BSDEn}
    \mathbf{y}_t(k)=\int_t^k(\lambda \max\{l(X_s),\min(\mathbf{y}_s(k),u(X_s))\}-(\lambda+r)\mathbf{y}_s(k))\,ds-\int_t^k Z_s(k)\,dW_s.
\end{equation}
}}
 The driver of BSDE (\ref{eq:BSDEn}) is Lipschitz continuous and of linear growth in $y$, independent of $z$. Hence, by Darling and Pardoux \cite[Proposition 2.3]{darling1997backwards}, there exists a unique pair of solutions $(\mathbf{y}(k),Z(k))\in \mathcal{S}^2_{0,k}\times \mathcal{H}^2_{0,k}$ to the BSDE (\ref{eq:BSDEn}).

Next, we extend BSDE (\ref{eq:BSDEn}) from $[0,k]$ to $[0,\infty)$ by defining $\mathbf{y}_t(k)=Z_t(k)=0$ for $t\geq k$. {Hence we get two sequences $(\mathbf{y}(k))_{k\geq 0}\subset \mathcal{S}^2_{\alpha,\infty}$ and $(Z(k))_{k\geq 0}\subset \mathcal{H}^2_{\alpha,\infty}$. Our next step is to show that these sequences are Cauchy sequences in the corresponding weighted normed spaces. }

Take $k'\geq k\geq 0$. 
It follows from It\^{o}'s formula that we have:
{\footnotesize{
\begin{eqnarray*}
    \lefteqn{e^{2\alpha t}(\mathbf{y}_t(k')-\mathbf{y}_t(k))^2}\\
    &=&-\int_t^{k'} 2ae^{2\alpha s}(\mathbf{y}_s(k')-\mathbf{y}_s(k))^2\,ds\\
    &&+\int_t^{k'} 2e^{2\alpha s}(\mathbf{y}_s(k')-\mathbf{y}_s(k))\lambda(\max\{l(X_s),\min(\mathbf{y}_s(k'),u(X_s))\}-\max\{l(X_s),\min(\mathbf{y}_s(k),u(X_s))\})\,ds\\
    &&-\int_t^{k'} 2e^{2\alpha s}(\lambda+r)(\mathbf{y}_s(k')-\mathbf{y}_s(k))^2\,ds+\int_k^{k'} 2e^{2\alpha s}\lambda l(X_s)(\mathbf{y}_s(k')-\mathbf{y}_s(k))\,ds\\
    &&-\int_t^{k'}2e^{2\alpha s}(\mathbf{y}_s(k')-\mathbf{y}_s(k))(Z_s(k')-Z_s(k))\,dW_s-\int_t^{k'}e^{2\alpha s}(Z_s(k')-Z_s(k))^2\,ds,
\end{eqnarray*}
}}
By an application of the inequality $2ab\leq \frac{a^2}{\delta^2}+\delta^2b^2$, for any $\delta>0$ we have:
{\footnotesize{
\begin{equation}\label{eq:young}
    \int_k^{k'} 2e^{2\alpha s}\lambda l(X_s)(\mathbf{y}_s(k')-\mathbf{y}_s(k))\,ds\leq \int_k^{k'} e^{2\alpha s}\lambda \left(\delta^2 l(X_s)^2+\frac{1}{\delta^2}(\mathbf{y}_s(k')-\mathbf{y}_s(k))^2\right)\,ds.
\end{equation}
}}
Since $\alpha > -r$ we may choose $\delta>0$ such that $\alpha=\frac{1}{2\delta^2}\lambda-r$. Using (\ref{eq:young}) and Lipschitz continuity of the driver, we obtain:
{\footnotesize{
\begin{eqnarray*}
    \lefteqn{e^{2\alpha t}(\mathbf{y}_t(k')-\mathbf{y}_t(k))^2}\\
    &\leq &\int_t^{k'} 2e^{2\alpha s}(\mathbf{y}_s(k)-\mathbf{y}_s(k'))^2\left(-\alpha+\lambda-(\lambda+r)+\frac{1}{2\delta^2}\lambda\right)\,ds+\int_k^{k'} e^{2\alpha s}\lambda \delta^2 l(X_s)^2\,ds\\
    &&-\int_t^{k'}2e^{2\alpha s}(\mathbf{y}_s(k')-\mathbf{y}_s(k))(Z_s(k')-Z_s(k))\,dW_s-\int_t^{k'}e^{2\alpha s}(Z_s(k')-Z_s(k))^2\,ds,
\end{eqnarray*}
}}
and hence, 
{\footnotesize{
\begin{eqnarray}\label{eq:limit}
    \lefteqn{e^{2\alpha t}(\mathbf{y}_t(k')-\mathbf{y}_t(k))^2+\int_t^{k'}e^{2\alpha s}(Z_s(k')-Z_s(k))^2\,ds}\notag\\&\leq& \int_k^{k'} e^{2\alpha s}\lambda \delta^2 l(X_s)^2\,ds-\int_t^{k'}2e^{2\alpha s}(\mathbf{y}_s(k')-\mathbf{y}_s(k))(Z_s(k')-Z_s(k))\,dW_s\notag\\
    &=&\int_k^{k'} e^{2\alpha s}\lambda \delta^2 l(X_s)^2\,ds+\int_0^{t}2e^{2\alpha s}(\mathbf{y}_s(k')-\mathbf{y}_s(k))(Z_s(k')-Z_s(k))\,dW_s\notag\\&&\hspace{33mm}-\int_0^{k'}2e^{2\alpha s}(\mathbf{y}_s(k')-\mathbf{y}_s(k))(Z_s(k')-Z_s(k))\,dW_s.
\end{eqnarray}
}}
It can be shown that $(\int_0^t2e^{2\alpha s}(\mathbf{y}_s(k')-\mathbf{y}_s(k))(Z_s(k')-Z_s(k))\,dW_s)_{t\geq 0}$ is a uniformly integrable martingale. Indeed, by the BDG inequality and the inequality $2ab\leq \frac{a^2}{\delta_1^2}+\delta_1^2b^2$, for any $\delta_1>0$,
{\footnotesize{
\begin{eqnarray*}
    \lefteqn{\E\left[\sup_{t\geq 0}\bigg|\int_0^{t}2e^{2\alpha s}(\mathbf{y}_s(k')-\mathbf{y}_s(k))(Z_s(k')-Z_s(k))\,dW_s\bigg|\right]}\\
&\leq & \frac{C}{\delta_1^2}\E\left[\left(\sup_{0\leq s\leq k'}e^{2\alpha s}(\mathbf{y}_s(k')-\mathbf{y}_s(k))^2\right)\right]+C\delta_1^2\E\left[\left(\int_0^{k'}e^{2\alpha s}(Z_s(k')-Z_s(k))^2\,d s\right)\right]\\
&\leq & \frac{C}{\delta_1^2}\lVert\mathbf{y}(k')-\mathbf{y}(k)\rVert_{\mathcal{S}^2_{\alpha,k'}}+C\delta_1^2\lVert Z(k')-Z(k)\rVert_{\mathcal{H}^2_{\alpha,k'}}<\infty.
\end{eqnarray*}
}}
Taking expectations at $t=0$ {in (\ref{eq:limit})}, we get:
{\footnotesize{
\[{\lVert Z(k')-Z(k)\rVert_{\mathcal{H}^2_{\alpha,\infty}}}=\E\left[\int_0^{k'}e^{2\alpha s}(Z_s(k')-Z_s(k))^2\,ds\right]\leq \lambda \delta^2\E\left[\int_k^{k'} e^{2\alpha s} l(X_s)^2\,ds\right]\downarrow 0\]
}}
as $k,k'\rightarrow \infty$. Hence $(Z(k))_{k\geq 0}$ is a Cauchy sequence in the space $\mathcal{H}^2_{\alpha,\infty}$ and converges to some limit which we denote by $Z=(Z_t)_{t\geq 0}$.

Similarly, taking expectations of the supremum over $t$ {in (\ref{eq:limit})}, we get:
{\footnotesize{
\begin{eqnarray*}
    \lefteqn{\E[\sup_{t\geq 0}e^{2\alpha t}(\mathbf{y}_t(k')-\mathbf{y}_t(k))^2]}\\
    &\leq & \E\left[\int_k^{k'} e^{2\alpha s}\lambda \delta^2 l(X_s)^2\,ds\right]+ \frac{C}{\delta_1^2}\lVert\mathbf{y}(k')-\mathbf{y}(k)\rVert_{\mathcal{S}^2_{\alpha,k'}}+C\delta_1^2\lVert Z(k')-Z(k)\rVert_{\mathcal{H}^2_{\alpha,k'}}.
\end{eqnarray*}
}}

Recall that $\mathbf{y}_t(k)=Z_t(k)=0$ for $t\geq k$, which implies that $\lVert \mathbf{y}(k')-\mathbf{y}(k)\rVert _{\mathcal{S}^2_{\alpha,k'}}=\lVert \mathbf{y}(k')-\mathbf{y}(k)\rVert _{\mathcal{S}^2_{\alpha,\infty}}$ and $\lVert Z(k')-Z(k)\rVert _{\mathcal{H}^2_{\alpha,k'}}=\lVert Z(k')-Z(k)\rVert _{\mathcal{H}^2_{\alpha,\infty}}$. Hence, by choosing $\delta_1$ such that $\delta_1^2>C$, we have:
{\footnotesize{
\[\left(1-\frac{C}{\delta_1^2}\right)\lVert \mathbf{y}(k')-\mathbf{y}(k)\rVert _{\mathcal{S}^2_{\alpha,\infty}}\leq \E\left[\int_k^{k'} e^{2\alpha s}\lambda \delta^2 l(X_s)^2\,ds\right]+ C\delta_1^2\lVert Z(k')-Z(k)\rVert _{\mathcal{H}^2_{\alpha,\infty}}.\]}}
Hence, taking $k,k'\rightarrow \infty$ {and applying the result that $(Z(k))_{k\geq 0}$ is a Cauchy sequence in the space $\mathcal{H}^2_{\alpha,\infty}$}, we have that $(\mathbf{y}(k))_{k\geq 0}$ is a Cauchy sequence in the space $\mathcal{S}^2_{\alpha,\infty}$ and converges to some limit which we denote by $\mathbf{y}=(\mathbf{y}_t)_{t\geq 0}$. 


By considering limits of $\mathbf{y}(k)$ and $Z(k)$ as $k\rightarrow \infty$, it is standard to check that $(\mathbf{y},Z)$ satisfies BSDE (\ref{eq:BSDE}). The process $\mathbf{y}$ is defined as a limit of a Cauchy sequence in the space $\mathcal{S}^2_{\alpha,\infty}$; therefore, it is immediate that $\lim\limits_{t\uparrow \infty}\E[e^{2\alpha t}\mathbf{y}_t^2]=0$. The existence of a solution to BSDE (\ref{eq:BSDE}) thus follows.

For the uniqueness of the solution, let $(\mathbf{y},Z)$ and $(\mathbf{y}',Z')$ be two solutions to BSDE (\ref{eq:BSDE}), and define $\Delta \mathbf{y}=\mathbf{y}-\mathbf{y}'$ and $\Delta Z=Z-Z'$. Then, {for $0\leq t\leq T$}, $\Delta \mathbf{y}$ solves:
{\footnotesize{
\[\Delta \mathbf{y}_t=\Delta \mathbf{y}_T+\int_t^T \left(f(s,\mathbf{y}_s)-f(s,\mathbf{y}'_s)\right)\,ds-\int_t^T\Delta Z_s\,dW_s.\]
}}
Applying It\^{o}'s formula to $e^{2\alpha t}(\Delta \mathbf{y}_t)^2$, a similar argument as in the proof of existence implies that $\lVert \Delta \mathbf{y}\rVert _{\mathcal{S}^2_{\alpha,\infty}}=\lVert \Delta Z\rVert _{\mathcal{H}^2_{\alpha,\infty}}=0$, hence uniqueness follows.
$\square$

[Proof of Lemma \ref{lem:UI}]

The integrability of $Y$ follows from the result that $\mathbf{y}\in \mathcal{S}^2_{\alpha,\infty}$. Assumption~\ref{assumption:BSDE} states integrability of $L$ and $U$, therefore the integrability of $\hat{Y}$ follows.

    {The processes $L=(L_t)_{t\geq 0}$ and $U=(U_t)_{t\geq 0}$ are in $\mathcal{H}^2_{\alpha+r,\infty}$ under Assumption \ref{assumption:BSDE}. This implies that $\int_0^\infty e^{2(\alpha+r)t}L_t^2\,dt$ and $\int_0^\infty e^{2(\alpha+r)t}U_t^2\,dt$ are a.s. finite. We know that, if a nonnegative function $f$ satisfies $\int_0^\infty f(x)\,dx<\infty$ and $\lim\limits_{t\uparrow \infty}f(x)$ exists, then this limit must be zero. Hence, by Assumption \ref{assumption:BSDE}, $L_t,U_t\rightarrow 0$ a.s..

    The process $\mathbf{y}$ is in the space $ \mathcal{S}^2_{\alpha,\infty}$ with $\alpha>-r$, therefore it follows that $Y=(Y_t)_{t\geq 0}\in  \mathcal{S}^2_{\alpha+r,\infty}$, which implies that $Y_t\rightarrow 0$ a.s.. Hence $\hat{Y}_{t}=\max\{\min\{U_{t}, Y_{t}\},L_{t}\}\rightarrow 0$ a.s..}
$\square$




\begin{thebibliography}{25}
\providecommand{\natexlab}[1]{#1}
\providecommand{\url}[1]{\texttt{#1}}
\expandafter\ifx\csname urlstyle\endcsname\relax
  \providecommand{\doi}[1]{doi: #1}\else
  \providecommand{\doi}{doi: \begingroup \urlstyle{rm}\Url}\fi

\bibitem{Alvarez2024}
Alvarez, Luis, Lempa, Jukka, Saarinen, Harto, and Sillanpaa, Wiljami.
\newblock Solutions for {Poisson} stopping problems of linear diffusions via
  extremal processes.
\newblock \emph{Stochastic Processes and their Applications}, 172: 104351, 2024.

\bibitem{bismut1977probleme}
Bismut, Jean-Michel.
\newblock Sur un probleme de {Dynkin}.
\newblock \emph{Zeitschrift f{\"u}r Wahrscheinlichkeitstheorie und Verwandte
  Gebiete}, 39(1):31--53, 1977.

\bibitem{borodin1997handbook}
Borodin, Andrei~N. and Salminen, Paavo.
\newblock \emph{Handbook of {Brownian} motion---facts and formulae}.
\newblock Probability and its Applications. Birkh\"auser Verlag, Basel, Second
  edition, 2002.

\bibitem{cvitanic1996backward}
Cvitanic, Jaksa and Karatzas, Ioannis.
\newblock Backward stochastic differential equations with reflection and
  {Dynkin} games.
\newblock \emph{The Annals of Probability}, 24(4):2024--2056, 1996.

\bibitem{DaiDong2024}
Dai, Min and Dong, Yuchao Dong. 
\newblock Learning an optimal investment policy with transaction costs via a randomized Dynkin game. 
\newblock\emph{SSRN 4871712}, 2024.

\bibitem{darling1997backwards}
Darling, Richard~WR and Pardoux, Etienne.
\newblock Backwards {SDE} with random terminal time and applications to
  semilinear elliptic {PDE}.
\newblock \emph{The Annals of Probability}, 25(3):1135--1159, 1997.

\bibitem{DeAngelis2020}
De~Angelis, Tiziano and Ekstr{\"o}m, Erik.
\newblock Playing with ghosts in a {Dynkin} game.
\newblock \emph{Stochastic Processes and their Applications}, 130:6133--6156,
  2020.

\bibitem{DeAngelis2021_a}
De~Angelis, Tiziano, Ekstr{\"o}m, Erik, and Glover, Kristoffer.
\newblock Dynkin games with incomplete and asymmetric information.
\newblock \emph{Mathematics of Operations Research}, 47(1):560--586,
  2022.

\bibitem{DeAngelis2021_b}
De~Angelis, Tiziano, Merkulov, Nikita, and Palczewski, Jan.
\newblock On the value of non-{Markovian} {Dynkin} games with partial and
  asymmetric information.
\newblock \emph{The Annals of Applied Probability}, 32(3):1774--1813,
  2022.

\bibitem{liang2020risk}
Du, Kai, Liang, Gechun, Sun, Haodong and Wu, Zhen. \newblock Risk-sensitive Dynkin games with heterogeneous Poisson random intervention times. 
\newblock {\em arXiv:2008.01787}, 2020.

\bibitem{dupuis2002optimal}
Dupuis, Paul and Wang, Hui.
\newblock Optimal stopping with random intervention times.
\newblock \emph{Advances in Applied Probability}, 34(1):141--157, 2002.

\bibitem{dynkin1969game}
Dynkin, E.~B.
\newblock A game-theoretic version of an optimal stopping problem.
\newblock \emph{Dokl. Akad. Nauk SSSR}, 185:16--19, 1969.

\bibitem{Ekstrom2008SICON}
Ekstr{\"o}m, Erik and Peskir, Goran.
\newblock Optimal stopping games for {Markov} processes.
\newblock \emph{SIAM Journal on Control and Optimization}, 47(2):684--702,
  2008.

\bibitem{Gapeev2024}
Gapeev, Pavel~V.
\newblock Discounted nonzero-sum optimal stopping games under {Poisson} random
  intervention times.
\newblock \emph{Stochastics}, 96(7): 1862-1892, 2024.

\bibitem{Gilbarg2001}
Gilbarg, David and Trudinger, Neil~S.
\newblock \emph{Elliptic partial differential equations of second order}.
\newblock Classics in Mathematics. Springer-Verlag, Berlin, 2001.

\bibitem{guo2020dynkin}
Guo, Ivan.
\newblock On {Dynkin} games with unordered payoff processes.
\newblock \emph{arXiv:2008.06882}, 2020.

\bibitem{Hobson}
Hobson, David.
\newblock The shape of the value function under {Poisson} optimal stopping.
\newblock \emph{Stochastic Processes and their Applications}, 133:229--246,
  2021.

\bibitem{HZ}
Hobson, David and Zeng, Matthew.
\newblock Constrained optimal stopping, liquidity and effort.
\newblock \emph{Stochastic Processes and their Applications}, 150:819--843,
  2022.

\bibitem{hobson2024callable}
Hobson, David, Liang, Gechun, and Wang, Edward.
\newblock Callable convertible bonds under liquidity constraints and hybrid
  priorities.
\newblock \emph{SIAM Journal on Financial Mathematics}, 15(4): 1083-1123, 2024. 

\bibitem{karatzas2014brownian}
Karatzas, Ioannis and Shreve, Steven.
\newblock \emph{Brownian motion and stochastic calculus}.
\newblock Springer, New York, 2014.

\bibitem{Kifer_2}
Kifer, Yuri.
\newblock Dynkin's games and {Israeli} options.
\newblock \emph{International Scholarly Research Notices}, 2013.

\bibitem{laraki2005value}
Laraki, Rida and Solan, Eilon.
\newblock The value of zero-sum stopping games in continuous time.
\newblock \emph{SIAM Journal on Control and Optimization}, 43(5):1913--1922,
  2005.

\bibitem{lempa2012optimal}
Lempa, Jukka.
\newblock Optimal stopping with information constraint.
\newblock \emph{Applied Mathematics \& Optimization}, 66(2):147--173, 2012.

\bibitem{lempa2023zero}
Lempa, Jukka and Saarinen, Harto.
\newblock A zero-sum {Poisson} stopping game with asymmetric signal rates.
\newblock \emph{Applied Mathematics \& Optimization}, 87(3):35, 2023.

\bibitem{liang2015stochastic}
Liang, Gechun.
\newblock Stochastic control representations for penalized backward stochastic
  differential equations.
\newblock \emph{SIAM Journal on Control and Optimization}, 53(3):1440--1463,
  2015.

\bibitem{Liang_Sun}
Liang, Gechun and Sun, Haodong.
\newblock Dynkin games with {Poisson} random intervention times.
\newblock \emph{SIAM Journal on Control and Optimization}, 57(4):2962--2991,
  2019.

\bibitem{Liang_2025}
Liang, Gechun, Wei, Wei, Wu, Zhen and Xu, Zhenda. \newblock Recursive optimal stopping with Poisson stopping constraints. 
\newblock\emph{SIAM Journal on Control and Optimization}, 63(4):2734--2762,
  2025.

\bibitem{merkulov2021value}
Merkulov, Nikita.
\newblock Value and {Nash equilibrium} in {games of optimal stopping}.
\newblock PhD thesis, University of Leeds, 2021.

\bibitem{neveu1975discrete}
Neveu, Jacques.
\newblock \emph{Discrete-parameter martingales}.
\newblock North-Holland Mathematical Library, vol. 10, Revised edition. North-Holland Publishing Co., Amsterdam-Oxford; American Elsevier Publishing Co., Inc., New York, 1975.

\bibitem{perez2021non}
P{\'e}rez, Jos{\'e} Luis, Rodosthenous, Neofytos, and Yamazaki, Kazutoshi.
\newblock Non-zero-sum optimal stopping game with continuous versus periodic
  observations.
\newblock \emph{Mathematics of Operations Research}, 2024.
\newblock \url{https://doi.org/10.1287/moor.2023.0123}.

\bibitem{rosenberg2001stopping}
Rosenberg, Dinah, Solan, Eilon, and Vieille, Nicolas.
\newblock Stopping games with randomized strategies.
\newblock \emph{Probability Theory and Related Fields}, 119(3):433--451, 2001.

\bibitem{stettner1982}
Stettner, {\L}ukasz.
\newblock Zero-sum {Markov} games with stopping and impulsive strategies.
\newblock \emph{Applied Mathematics \& Optimization}, 9(1):1--24, 1982.

\bibitem{touzi2002continuous}
Touzi, Nizar and Vieille, Nicolas.
\newblock Continuous-time {Dynkin} games with mixed strategies.
\newblock \emph{SIAM Journal on Control and Optimization}, 41(4):1073--1088, 2002.
\end{thebibliography}
\end{document}